\renewcommand{\rho}{\varrho}
\renewcommand{\heq}{\mathrm{heq}}
\newcommand{\WCb}{\mathrm{WCb}}
\begin{document}

\title{On Weak Elimination of Hyperimaginaries and its Consequences}
\author{Donald Brower\footnote{University of Notre Dame}\\
Cameron Donnay Hill\footnote{Correspondence to:   University of Notre Dame,
Department of Mathematics,
255 Hurley, Notre Dame, IN 46556. Email: \texttt{cameron.hill.136@nd.edu}}\\
}

\maketitle
\begin{abstract}
We analyze the notion of \emph{weak elimination of hyperimaginaries} (WEHI) in simple theories. A key observation in the analysis is a characterization of WEHI in terms of forking dependence -- a condition we dub \emph{dependence-witnessed-by-imaginaries} (DWIP). 
Generalizing results of \cite{adler-thesis} and \cite{ealy-thesis}, we show that in a simple theory with WEHI, forking and \th-forking coincide. We also show that, conversely, the equivalence of $\indep$ and $\thind$ is (almost) sufficient for WEHI. Thus, the WEHI and the statement $\indep=\thind$ are morally equivalent. 
As a further application of our technology, we demonstrate stable forking for 1-based theories of finite SU-rank that have WEHI. 
\end{abstract}

\section*{Introduction}

Canonical bases were and are a key notion in geometric stability theory. Consequently, recovering objects for simple, possibly unstable, theories with the same or similar properties was necessary in order to carry over existing arguments from the stable case. This led to the development of \emph{hyperimaginaries} for simple theories. Unfortunately, hyperimaginaries have few of the desirable properties of imaginaries relative to a given first-order theory $T$, and examples of simple theories that \emph{do not} eliminate hyperimaginaries are few on the ground. There are also several results in the literature which take elimination of hyperimaginaries as a hypothesis.

As it is not known whether all simple theories eliminate hyperimaginaries, we hope that investigating fragments of elimination of hyperimaginaries will lead to a clearer picture of the whole. In this article, we investigate a fragment called \emph{weak elimination of hyperimaginaries}. Our first result along this line is a characterization of WEHI itself in terms of forking dependence. This characterization suggests a number of ways in which WEHI plays a role in questions more specifically pertaining to forking-dependence in simple theories. Indeed, we are able to show not only that $\indep=\thind$ in any simple theory with WEHI, but that if $\indep=\thind$ and $\thind$ is canonical, then the theory in question must have WEHI.

\subsection*{Stable-forking in simple theories}

Recall that the class of simple theories is a proper extension of the class of stable theories. While the former can be characterized by, among other things, the symmetry of non-forking independence, forking in a stable theory (deviation from the definition of the type) is often rather easier to identify and smoother to work with than forking in a simple theory. For this reason, one might hope that forking in a simple unstable theory is always at least \emph{witnessed} by a stable formula. Also, although there is a plethora of examples of simple unstable theories, many of these seem to be essentially stable up to some kind of ``noise.'' One way to formalize this intuition is found in \cite{AroundStableForking}:
\begin{defn}
A simple theory $T$ has \emph{stable forking} if whenever $q(x)$ is a complete type over a model $M$, $A \subset M$ and $q$ forks over $A$ then there is is a stable formula
$\psi(x;b) \in q$ which forks over $A$.
\end{defn}
The Stable Forking Conjecture is simply that every simple theory has stable forking.
The conjecture is unresolved---and not universally believed true---but it has been verified in several special classes including 1-based supersimple theories and stable theories expanded with a generic predicate.

Although the result is fairly well-known to model-theorists, demonstrations of the fact that 1-based supersimple theories have stable forking have all involved highly technical machinery spread over several publications. The original proof runs through \cite{KimInThere}, where Kim showed that  1-based simple theories with elimination of hyperimaginaries (EHI) have stable forking, and it terminates in a very technical paper, \cite{BPW}, where it is shown that all supersimple theories have EHI. In the standard textbook on simple theories, \cite{Wagner-SimpleTheories}, it is proved as a sort of afterthought in an extended treatment of elimination of hyperimaginaries. In this note, we present some streamlined proofs of stable forking in 1-based theories of finite $SU$-rank. While still building on \cite{BPW}, we avoid a great deal of technical machinery.

The demonstrations in this article arose partially out of an attempt to obtain the results of \cite{Peretz} on stable forking for elements of ``low $SU$-rank'' in $\omega$-categorical simple theories while dropping the $\omega$-categoricity assumption.
The original project was successful in that we proved (section \ref{sec:elements_of_rank_2}) that in any simple theory with WEHI, if a rank-2 element $a$ forks with $b$, and $b$ has finite SU-rank, then $tp(a/b)$ contains a stable forking formula. We extend the method that worked for rank-2 elements against finite-rank elements to work for all finite-ranked elements in 1-based theories with WEHI (section \ref{sec:finite-su-rank}).

\bigskip

In what follows we assume a familiarity with common conventions of model theory .
We work in a suitably saturated model $\MM$;
we will use the terms forking, simple, supersimple, $SU$-rank,  canonical base, and so on freely.
One reference  for this background material is \cite{Wagner-SimpleTheories}. All that said, we include a very brief reminder on the basics of hyperimaginaries because the word ``hyperimaginary'' makes at least one of the authors a bit queasy.

\section{Weak elimination of hyperimaginaries}

In this short section, we present (in the first subsection) a review of hyperimaginaries in simple theories leading to the definition of weak elimination of hyperimaginaries. In the second subsection, we show that WEHI can be rephrased in a fairly inuitive way in terms of forking dependence.

\subsection{Background on hyperimaginaries}

This subsection presents background on hyperimaginaries. There are no new results contained here, and readers familiar with hyperimaginaries may wish to skip to the next subsection.
For further detail, we refer the reader to \cite{HKP}, \cite{BPW}, and \cite{Wagner-SimpleTheories}.

Stability theory makes much use of imaginary elements, each of which is a name for a class of a definable equivalence relation.
Imaginary elements can be ``added'' to a theory by adding sorts to represent each definable equivalence relation, and this
 ``eq'' construction preserves all of the relevant properties of the original first-order theory.

We can, then, do the same thing for type-definable equivalence relations,
calling the name of such an equivalence class a \emph{hyperimaginary}.
If $E(\bar x, \bar y)$ is a type definable equivalence relation
with $\bar x = (x_i : i \in I)$ and $\bar y = (y_i : i \in I)$ for some (possibly infinite) linearly ordered index set $I$, and $\bar a = (a_i : i \in I)$ is some tuple of elements from $M^\eq$ then $\bar a_E$ is the hyperimaginary element representing the class of all tuples $E$-related to $\bar a$.
It can be shown that any type-definable equivalence relation is equivalent to an infinite conjunction of equivalence relations defined by a countable types, so  it suffices to add names only for the equivalence relations defined by countably-infinite conjunctions of formulas.

A model $M$ enriched with these names is denoted $M^\heq$.
It's not hard to see that  $M \subset M^\eq \subset M^\heq$ up to some natural and obvious identifications.
Automorphisms of $M$ lift to $M^\heq$, so if $e = \bar a_E$ is a hyperimaginary element
we say an automorphism fixes $e$ iff it fixes the $E$ class of $\bar a$ as a set.
This is an annoyance of dealing with hyperimaginaries: the formulas comprising the equivalence relation $E$ do not need to be $e$-invariant.
The lifting of automorphisms does let us extend definable closure and algebraic closure to $M^\heq$; we will use $\dcl$ and $\acl$ to refer to these extensions for this paper.

Unfortunately, we cannot think of $M^\heq$ as a first-order structure. 
The main issue is that type-definable sets are closed but not clopen, so the negation
of such is not, in general,  type-definable.
However, a fragment of logic can be developed.
Specifically, we can extend the notions of \emph{type} and \emph{forking} to hyperimaginaries.
If $a$ and $b$ are hyperimaginaries then there are equivalence relations $E(x,y)$ and $F(w,z)$ and tuples $\bar a = (a_i : i \in I)$, $\bar b = (b_i : i \in J)$ such that $a = \bar a_E$, $b = \bar b_F$.
The type $\tp(a/b)$ can then be expressed 
as the union of the partial types over $\bar b$,
\[
\exists wz [E(x,w) \land F(z,\bar b) \land \varphi(w',z')]
\]
for all $\varphi$ such that the partial type is satisfied by $a$.
In the equation above $w'$ and $z'$ represent some finite subtuples of $w$ and $z$.
We could have chosen to base the type on any tuple $\bar b^*$ so long as $\bar b^*$ and $\bar b$ are $F$-equivalent.
This definition is the right one since it preserves the idea of types representing orbits of elements over a base set.
\begin{fact}\cite[Proposition 1.4]{HKP}
    $a'$ realizes $\tp(a/b)$ if and only if there is an automorphism $f$ 
fixing $b$ such that $f(a) = a'$.
\end{fact}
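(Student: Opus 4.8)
The plan is to prove the substantive ($\Rightarrow$) direction by reducing the existence of the automorphism to a statement about \emph{real} tuples, where the strong homogeneity of the monster model $\MM$ applies directly. Write $a = \bar a_E$, $a' = \bar a'_E$, and $b = \bar b_F$ for type-definable equivalence relations $E,F$ and representative tuples. The reverse direction is a direct computation that I would dispatch first: if $f$ is an automorphism with $f(\bar b)\, F\, \bar b$ and $f(\bar a)\, E\, \bar a'$, then applying $f$ to a witness of any formula $\exists wz[E(x,w)\wedge F(z,\bar b)\wedge\varphi(w',z')]$ of $\tp(a/b)$ produces a witness of the same formula at $\bar a'$, using that $f$ fixes the $F$-class of $\bar b$ setwise and carries the $E$-class of $\bar a$ to that of $\bar a'$. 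Hence $a'$ realizes $\tp(a/b)$, and I expect no difficulty here.

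For the forward direction, the key reduction is the claim that it suffices to find representatives $\bar a_1\, E\, \bar a'$ and $\bar b_1\, F\, \bar b$ with $\tp(\bar a_1\bar b_1) = \tp(\bar a\bar b)$ as types of real tuples. Granting this, strong homogeneity of $\MM$ yields an automorphism $g$ with $g(\bar a\bar b) = \bar a_1\bar b_1$; then $g(b) = (g\bar b)_F = (\bar b_1)_F = b$ because $\bar b_1\, F\, \bar b$, and likewise $g(a) = (g\bar a)_E = (\bar a_1)_E = a'$ because $\bar a_1\, E\, \bar a'$, so $g$ fixes $b$ and sends $a$ to $a'$ as required.

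I would establish the existence of $\bar a_1,\bar b_1$ by compactness and saturation. Consider the partial type $\Gamma(\bar x,\bar y)$ consisting of $E(\bar x,\bar a')$, $F(\bar y,\bar b)$, and all formulas of $\tp(\bar a\bar b)(\bar x,\bar y)$. To see $\Gamma$ is consistent, fix a finite subset; it is implied by a single formula $E_0(\bar x,\bar a')\wedge F_0(\bar y,\bar b)\wedge\varphi(\bar x,\bar y)$, where $E_0,F_0$ are finite conjunctions of the formulas defining $E,F$ and $\varphi\in\tp(\bar a\bar b)$. Since $\varphi(\bar a,\bar b)$ holds, the formula $\exists wz[E(x,w)\wedge F(z,\bar b)\wedge\varphi(w',z')]$ belongs to $\tp(a/b)$; as $a'$ realizes this type, there are $w\, E\, \bar a'$ and $z\, F\, \bar b$ with $\varphi(w',z')$, and this pair realizes the chosen finite subset of $\Gamma$. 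Thus $\Gamma$ is finitely satisfiable, so by saturation of $\MM$ it is realized by some $(\bar a_1,\bar b_1)$, which is precisely what the reduction consumes.

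The main obstacle I anticipate is not conceptual but bookkeeping: correctly treating the type-definable relations $E$ and $F$ as partial types, so that only their finite approximations $E_0,F_0$ enter any finite subset of $\Gamma$, and tracking the finite-subtuple convention $w',z'$ from the definition of $\tp(a/b)$ so that membership of $\exists wz[\cdots]$ in $\tp(a/b)$ matches the formulas $\varphi\in\tp(\bar a\bar b)$. I would also record the routine observation that this membership is independent of the chosen representatives of $a$ and $b$, which legitimizes the whole argument. Beyond this, no idea past compactness together with the homogeneity of $\MM$ should be needed.
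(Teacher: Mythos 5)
The paper does not prove this statement at all; it is quoted as a Fact with a citation to \cite{HKP}, so there is no in-paper proof to compare against. Your argument is the standard one (and essentially the one in the cited source and in \cite{Wagner-SimpleTheories}): the easy direction by automorphism-invariance of the type-definable relations $E$ and $F$, and the substantive direction by realizing, via saturation, the partial type $E(\bar x,\bar a')\cup F(\bar y,\bar b)\cup\tp(\bar a\bar b)(\bar x,\bar y)$ and then invoking strong homogeneity of $\MM^\eq$; the finite-satisfiability step correctly uses that each $\varphi\in\tp(\bar a\bar b)$ contributes a member of $\tp(a/b)$ realized by $a'$. It is correct as written.
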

From the notion of type one can derive the notion of an indiscernible sequence of hyperimaginaries.
We then say a type $p(x,b) = \tp(a/b)$ divides over a set $C$ if there is a sequence
$(b_i : i < \omega)$ in $\tp(b/C)$ indiscernible over $C$ such that
$\bigcup_i p(x,b_i)$ is inconsistent.
A type forks if it implies a finite disjunction of types which divide.

Just as it is terribly convenient to work in a theory which eliminates imaginaries, it is very nice to work with theories that eliminate hyperimaginaries.
We say that a theory $T$ eliminates hyperimaginaries (has EHI) if for every hyperimaginary $e\in\MM^\heq$, there is a set of imaginaries $\bar b\subset \MM^\eq$ such that $\dcl(\bar b)=\dcl(e)$.
In this article, we investigate an (apparently) weaker condition:
\begin{defn}
    We say a theory has \textit{weak elimination of hyperimaginaries (WEHI)} if
    for every hyperimaginary $e\in\M^\heq$, there is a set of imaginaries
    $\left.B\subset\M^\eq\right.$ such that $\bdd(B)=\bdd(e)$.
\end{defn}
If a given hyperimaginary $e$ has a set of imaginaries $B$ where $\bdd(B) = \bdd(e)$, then we say $e$ is weakly eliminated.
This notion is parallel to that of elimination of hyperimaginaries.

\subsection{Expressing WEHI in terms of forking}
\label{sec:dwip-and-wehi}

\begin{defn}
    We say a simple theory has the \textit{Dependence Witnessed by Imaginaries Property (DWIP)} if
    whenever $a \nindep_{C} b$ for hyperimaginary elements $a$ and $b$ and set $C$ then there is an imaginary element $d \in \acl(bC)$ such that $a \nindep_{C} d$.
\end{defn}

\begin{prop}\label{prop:DWIP--WEHI}
    Suppose $T$ is simple and has DWIP.
    Then for every hyperimaginary $e$ there is a set of imaginary elements $D$ such that $D \subset \dcl(e)$ and $e \in \bdd(D)$. In particular, $T$ has WEHI.
\end{prop}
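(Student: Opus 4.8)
The plan is to take the largest natural candidate for $D$, namely the set $D := \dcl(e)\cap\MM^\eq$ of all imaginaries definable from $e$, and to establish the one nontrivial requirement, $e\in\bdd(D)$. The inclusion $D\subset\dcl(e)$ holds by construction, and the ``in particular'' clause is then immediate: $D\subset\dcl(e)$ forces $\bdd(D)\subseteq\bdd(e)$, while $e\in\bdd(D)$ forces $\bdd(e)\subseteq\bdd(D)$, so $\bdd(D)=\bdd(e)$, which is exactly WEHI.

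To show $e\in\bdd(D)$ I would argue by contradiction, and the key idea is to feed the \emph{self}-dependence of $e$ into DWIP. Suppose $e\notin\bdd(D)$. In a simple theory one has $a\indep_C a$ iff $a\in\bdd(C)$ (for hyperimaginaries as well), so $e\notin\bdd(D)$ yields $e\nindep_D e$. Now apply DWIP with $a=b=e$ and $C=D$: since $D\subseteq\dcl(e)$ we have $\acl(eD)=\acl(e)$, so DWIP produces an imaginary $d\in\acl(e)$ with $e\nindep_D d$. Note that $d$ is only algebraic over $e$, not yet definable from it, so this is not quite the contradiction we are after.

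The step I expect to require the most care is pushing this witness from $\acl(e)$ down into $\dcl(e)$. For this I would replace $d$ by the canonical code $d^\ast$ of its orbit under $\mathrm{Aut}(\MM/e)$, which is finite because $d\in\acl(e)$. Since this orbit is an $e$-invariant finite set of imaginaries, its code $d^\ast$ is an imaginary fixed by every automorphism fixing $e$, whence $d^\ast\in\dcl(e)\cap\MM^\eq=D$; moreover each element of the orbit, and in particular $d$, lies in $\acl(d^\ast)$.

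Finally I would close the argument by transitivity of independence. Because $d\in\acl(d^\ast)\subseteq\acl(Dd^\ast)$ we have $e\indep_{Dd^\ast}d$, and because $d^\ast\in D$ we trivially have $e\indep_D d^\ast$; composing these gives $e\indep_D d^\ast d$, and in particular $e\indep_D d$, contradicting $e\nindep_D d$. Hence $e\in\bdd(D)$, as required. The only genuinely delicate points are the validity of the self-dependence fact $a\indep_C a\Leftrightarrow a\in\bdd(C)$ and of finite-set coding for imaginaries in the $\heq$/$\eq$ framework; everything else is routine bookkeeping with forking.
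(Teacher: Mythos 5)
Your proof is correct and follows essentially the same route as the paper: both take $D$ to be (up to interdefinability) the imaginaries in $\dcl(e)$, feed the self-dependence $e\nindep_D e$ into DWIP, and then defeat the resulting witness $d\in\acl(e)\cap\MM^\eq$ by coding its finite $e$-orbit as an imaginary lying in $D$, so that $d\in\acl(D)$ and $e\indep_D d$, a contradiction. The only cosmetic difference is that the paper performs the finite-orbit coding up front (defining $D$ as the set of such codes and proving $\acl^\eq(D)=\acl(e)\cap\MM^\eq$) and proves $A\indep_D e$ for every set $A$ before specializing to $A=e$, whereas you code only the single witness produced by DWIP.
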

\begin{proof}
    Fix an arbitrary hyperimaginary $e$.
    Let $C = \acl(e) \cap \M^\eq$.
    For each element $a \in C$,
    let $a'$ be the imaginary element naming all finitely many conjugates of $a$ over $C$.
    Let $D$ be the set of all tuples $a'$ for $a \in C$.
    By construction $D \subset \dcl(e)$.
    \begin{claim}
    $C = \acl^\eq(D)$.
    \end{claim}
    \begin{proof}[Proof of claim]
        It is clear $D \subset C$ so $\acl^\eq(D) \subset \acl^\eq(C) = C$.
        For the other direction, suppose $c \in C$.
        Then, by construction, there is a corresponding element $c' \in D$,
        and there is an algebraic formula which says $c$ is in the set named by $c'$.
        Hence $c \in \acl^\eq(D)$.
    \end{proof}
    \begin{claim}
    $A \indep_D e$ for any set $A$.
    \end{claim}
    \begin{proof}[Proof of claim]
        Suppose not.
        Then there is some set $A$ such that $A \nindep_{D} e$.
        By DWIP there is an imaginary element $d \in \acl(eD) \cap \M^\eq$
        such that $A \nindep_{D} d$.
        Because $D \subset \dcl(e)$, in fact $\acl(eD) = \acl(e)$.
        Since $d$ is imaginary $d \in C$, and so $\left.d \in \acl(D)\right.$.
        Hence $A \indep_{D} d$, contradicting our choice of $d$.
    \end{proof}
    Now consider the case $A = e$.
    Then $e \indep_D e$, which implies $e \in \bdd(D)$.
\end{proof}
\begin{prop}\label{prop:WEHI--DWIP}
    Weak elimination of hyperimaginaries implies DWIP.
\end{prop}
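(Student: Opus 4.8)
The plan is to route everything through the canonical base and to deploy WEHI only at the last moment, to trade a hyperimaginary witness for an imaginary one. So assume $T$ has WEHI, and suppose $a \nindep_C b$ for hyperimaginaries $a,b$ and a set $C$. First I would replace $b$ by the $\bdd$-closed set $B = \bdd(Cb)$: since forking is unchanged by adjoining bounded closure to the right, $a \nindep_C B$. Let $c = \mathrm{Cb}(a/B)$ be the canonical base of $a$ over $B$, which exists as a hyperimaginary in any simple theory and satisfies $c \in \bdd(B) = \bdd(Cb)$ together with $a \indep_c B$. The first real step is to show that $c$ already witnesses the forking, namely $a \nindep_C c$.

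To get $a \nindep_C c$ I would argue from the two defining properties $a \indep_c B$ and $c \in \bdd(B)$, together with $C \subseteq B$. Monotonicity turns $a \indep_c B$ into $a \indep_{Cc} B$ (we are only enlarging the base by $C \subseteq B$), so in the transitivity chain $a \indep_C B \iff (a \indep_C c \text{ and } a \indep_{Cc} B)$ the second conjunct is automatic. Hence $a \indep_C B \iff a \indep_C c$, and since $a \nindep_C B$ we obtain $a \nindep_C c$.

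Now I apply WEHI to the hyperimaginary $c$: there is a set of imaginaries $B^\ast \subset \M^\eq$ with $\bdd(B^\ast) = \bdd(c)$. Because forking through a parameter set sees only its bounded closure, $\bdd(Cc) = \bdd(CB^\ast)$ gives $a \nindep_C B^\ast$, and finite character of forking then produces a finite subtuple $d \subseteq B^\ast$ --- an imaginary --- with $a \nindep_C d$. To finish I must place $d$ inside $\acl(Cb)$: by construction $d \in B^\ast \subseteq \bdd(c) \subseteq \bdd(Cb)$, and because an imaginary lying in some bounded closure in fact lies in its algebraic closure (the standard identity $\acl^\eq(A) = \bdd(A) \cap \M^\eq$), we conclude $d \in \acl(Cb)$. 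This $d$ is precisely the imaginary witness demanded by DWIP.

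The forking bookkeeping --- invariance under bounded closure, monotonicity, transitivity, finite character --- is routine. The step I expect to require the most care is the reduction $a \nindep_C b \Rightarrow a \nindep_C c$, and in particular checking that the canonical-base apparatus of simple theories behaves correctly when $a$ is itself hyperimaginary and that $c$ genuinely sits in $\bdd(Cb)$. The other point worth stating carefully is the final passage from bounded to algebraic closure for the imaginary $d$: this is exactly what lets WEHI, which is a statement about $\bdd$, deliver a bona fide element of $\acl(Cb)$ rather than merely of $\bdd(Cb)$.
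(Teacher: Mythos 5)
Your proof is correct, but it takes a longer route than the paper's. The paper applies WEHI directly to the hyperimaginary $b$: it produces imaginaries $B$ with $\bdd(B)=\bdd(b)$, notes that $a\nindep_C b$ forces $a\nindep_C B$ (forking is insensitive to passing to bounded closure on the right), and extracts the witness $d$ by finite character, with $d\in B\subseteq\bdd(b)\cap\M^\eq\subseteq\acl(b)$. You instead first pass to $c=\Cb(a/\bdd(Cb))$, prove $a\nindep_C c$ via the standard transitivity argument, and only then invoke WEHI --- on $c$ rather than on $b$. The detour is sound (your base-monotonicity/transitivity bookkeeping and the final passage from $\bdd$ to $\acl^\eq$ for an imaginary are all fine), but it is not needed here, since WEHI already applies to $b$ itself. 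What your route buys is that it only ever weakly eliminates a hyperimaginary \emph{arising as a canonical base}, i.e.\ it shows the formally weaker hypothesis ``weak elimination of canonical bases implies DWIP''; this is precisely the reduction the paper carries out separately in Lemma~\ref{lem:wecb-wehi}, so your argument in effect folds that lemma into this proposition. Either version is acceptable.
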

\begin{proof}
    Suppose $T$ weakly eliminates hyperimaginaries, and $a$,$b$ are hyperimaginaries such that 
    $a \nindep_{C} b$.
    We need to find an imaginary element $d \in \acl(bC)$ such that $a \nindep_{C} d$.
    WEHI gives a set of imaginary elements $B$ such that $\bdd(B) = \bdd(b)$.
    Hence, $a \nindep_C \bdd(B)$, implying $a \nindep_{C} B$.
    Then by the finite character of forking, there is a finite tuple $d \in B$ such that $a \nindep_{C} d$.
    $B$ is a set of imaginary elements, hence $B \subset \bdd(b)$ implies $B \subset \acl(b)$.
    Hence $d \in B \subseteq \acl(b)$ and we are done.
\end{proof}
\begin{cor}\label{cor:DWIP-is-WEHI}
    A simple theory $T$ has DWIP if any only if it has WEHI.
\end{cor}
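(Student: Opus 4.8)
The plan is simply to assemble the two propositions just proved, since the corollary is their formal conjunction. For the implication ``DWIP implies WEHI'' I would invoke Proposition \ref{prop:DWIP--WEHI} directly: it already delivers, for every hyperimaginary $e$, a set of imaginaries $D$ with $e\in\bdd(D)$, and its concluding sentence explicitly notes that this yields WEHI. (Indeed, that proposition proves something a bit stronger than bare WEHI, since it arranges $D\subset\dcl(e)$ as well, but for the corollary only the weaker consequence is needed.) For the reverse implication ``WEHI implies DWIP'' I would invoke Proposition \ref{prop:WEHI--DWIP}, which establishes exactly this. Chaining the two gives the desired biconditional.

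Because both halves are already in hand, there is essentially no work left and no genuine obstacle to anticipate; the ``proof'' is a one-line citation of the two propositions. If I wanted to make the statement self-contained rather than a pointer, the only step worth spelling out would be recalling \emph{why} Proposition \ref{prop:DWIP--WEHI} counts as establishing WEHI: the definition of WEHI asks, for each hyperimaginary $e$, for a set $B\subset\MM^\eq$ with $\bdd(B)=\bdd(e)$, and the set $D$ produced there satisfies both $e\in\bdd(D)$ and $D\subset\dcl(e)\subseteq\bdd(e)$, whence $\bdd(D)=\bdd(e)$. That verification is the routine matching of the proposition's output against the definition, and it is the only point where one must be slightly careful that the conclusion genuinely meets the $\bdd$-equality demanded by WEHI rather than just one containment.
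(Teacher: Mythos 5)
Your proposal matches the paper's proof exactly: the corollary is obtained by combining Propositions~\ref{prop:DWIP--WEHI} and~\ref{prop:WEHI--DWIP}, and your extra remark verifying that $D\subset\dcl(e)\subseteq\bdd(e)$ together with $e\in\bdd(D)$ yields $\bdd(D)=\bdd(e)$ is a correct (if optional) elaboration of why the first proposition delivers WEHI.
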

\begin{proof}
Combine Propositions~\ref{prop:DWIP--WEHI} and~\ref{prop:WEHI--DWIP}
\end{proof}

\section{\th-Independence in simple theories}

In this section, we will show that WEHI is morally equivalent to the statement $\indep=\thind$. The missing ingredient is the \emph{intersection property} for an abstract independence relation. (An independence relation with the intersection property is called canonical.)
The result below summarizes the results in this section.
\begin{thm}
    Let $T$ be a simple theory.
    The following are equivalent.
    \begin{enumerate}
        \item $T$ has weak elimination of hyperimaginaries
        \item $T$ has the dependence-witnessed-by-imaginaries property.
        \item $\indep = \thind$ and $\thind$ is canonical in $T$.
    \end{enumerate}
\end{thm}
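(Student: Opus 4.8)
The plan is to leverage Corollary~\ref{cor:DWIP-is-WEHI}, which already gives (1)$\iff$(2), and to prove the two implications (2)$\Rightarrow$(3) and (3)$\Rightarrow$(2). Thus the whole theorem reduces to relating DWIP to the pair of conditions ``$\indep=\thind$'' and ``$\thind$ is canonical.'' Throughout I would use the one containment that holds in every simple theory with no extra hypothesis, namely $\indep\subseteq\thind$ (equivalently, \th-forking implies forking, since forking-independence is the stronger relation); the substance of $\indep=\thind$ is therefore always the reverse containment $\thind\subseteq\indep$, i.e.\ that forking implies \th-forking.

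For (2)$\Rightarrow$(3) I would first isolate the statement for imaginary parameters as a lemma: in a simple theory, if $d$ is an imaginary element and $a\nindep_C d$, then $a$ \th-forks with $d$ over $C$. The point is that \th-forking may use a genuine (real or imaginary) parameter in its strong-dividing witness, and for an imaginary $d$ the canonical data controlling $\tp(d/C)$ already lives in $\M^\eq$, so such a parameter can be found there without recourse to hyperimaginaries. Granting this lemma, DWIP upgrades it to arbitrary parameters: given hyperimaginaries with $a\nindep_C b$, DWIP produces an imaginary $d\in\acl(bC)$ with $a\nindep_C d$; the lemma gives that $a$ \th-forks with $d$ over $C$; and since $d\in\acl(bC)$, monotonicity yields that $a$ \th-forks with $b$ over $C$. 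This establishes $\thind\subseteq\indep$ and hence $\indep=\thind$. For canonicity, I would read off from the proof of Proposition~\ref{prop:DWIP--WEHI} that WEHI supplies, for every hyperimaginary $e$, a set of imaginaries $D\subseteq\dcl(e)$ with $e\in\bdd(D)$; in other words $\bdd$-canonical bases may be taken to be imaginary. This is exactly what is needed to verify the intersection property for $\indep\,(=\thind)$, since the smallest base over which a type does not fork is then generated by an imaginary canonical base and so lies among the home sorts.

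For the converse (3)$\Rightarrow$(2) I would argue directly for DWIP. Suppose $a\nindep_C b$. Using $\indep=\thind$, the dependence is also a \th-dependence, so $a$ \th-forks with $b$ over $C$. Now invoke canonicity: the intersection property for $\thind$ furnishes a smallest base for this \th-dependence inside $\acl(bC)$, and---crucially---this canonical base is an imaginary object rather than a genuine hyperimaginary. Extracting from it an imaginary $d\in\acl(bC)$ on which $a$ still depends over $C$ yields the witness required by DWIP. By Corollary~\ref{cor:DWIP-is-WEHI}, DWIP is equivalent to WEHI, closing the cycle (1)$\Leftrightarrow$(2)$\Leftrightarrow$(3).

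The main obstacle is the interface between the abstract intersection property (``canonical'') and the concrete, hyperimaginary canonical bases produced by forking in a simple theory. In the forward direction this surfaces as the need to know that WEHI really does place the relevant canonical bases in $\M^\eq$ rather than merely in $\M^\heq$; in the converse it surfaces as the task of converting the smallest-base object guaranteed by canonicity into an honest imaginary element of $\acl(bC)$ that witnesses the original forking. The imaginary-parameter lemma underlying (2)$\Rightarrow$(3) is the other delicate point, since it is precisely here that the distinction between imaginary and hyperimaginary parameters in the definition of strong dividing must be exploited.
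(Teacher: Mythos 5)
Your reduction to (1)$\Leftrightarrow$(2) via Corollary~\ref{cor:DWIP-is-WEHI} matches the paper, and your observation that the real content of $\indep=\thind$ is the containment $\thind\subseteq\indep$ is correct. But both of your new implications have genuine gaps. For (2)$\Rightarrow$(3), your ``imaginary-parameter lemma'' --- that in \emph{every} simple theory, $a\nindep_C d$ for imaginary $d$ already implies $\th$-forking --- is precisely the hard direction and is not an unconditional fact; if it were, clause (3) would be equivalent to canonicity alone and the whole problem of comparing $\indep$ and $\thind$ in simple theories would be settled. Your justification (``the canonical data controlling $\tp(d/C)$ already lives in $\M^\eq$'') misidentifies the obstruction: the relevant object is $\Cb(a/Cd)$, which is a hyperimaginary even when $a$, $d$, $C$ are all real, and taming it is exactly where WEHI/DWIP is used. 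The paper's route avoids this entirely: Theorem~\ref{thm:dwip-intersection} uses DWIP applied to the hyperimaginary $e=\Cb(A/B)$ to produce imaginaries $\hat e\subseteq\dcl(e)$ with $A\indep_{\hat e}e$, placing $\hat e$ in $\acl(C_1)\cap\acl(C_2)$ and establishing the intersection property for $\indep$; then Adler's Theorem~\ref{thm:intersection-is-thorn-forking} converts canonicity of $\indep$ into the equality $\indep=\thind$ for free. Your canonicity paragraph gestures at the right mechanism, but you do not notice that, once canonicity of $\indep$ is in hand, Adler's theorem makes your lemma unnecessary --- and as written your proof of $\indep=\thind$ rests entirely on that unproven lemma.

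For (3)$\Rightarrow$(2) the gap is the one the paper explicitly flags: clause (3) is an assertion about imaginaries only, while DWIP quantifies over hyperimaginaries $a$ and $b$. You cannot ``use $\indep=\thind$'' on a dependence between hyperimaginaries, and the smallest base furnished by canonicity (the weak canonical base $\WCb$) is defined for types over subsets of $\M^\eq$. Bridging this is the substance of the paper's argument: Lemma~\ref{lem:intprop-wecb} shows via an indiscernible-sequence argument that for a type $p$ over imaginaries the hyperimaginary $\Cb(p)$ is interbounded with the imaginary set $\WCb(p)$; Lemma~\ref{lem:wecb-wehcb} transfers this to canonical bases of types over hyperimaginaries by completing a representative partial type over an imaginary tuple $\hat e$ without forking; and Lemma~\ref{lem:wecb-wehi} bootstraps from canonical bases to all hyperimaginaries via $d=\Cb(a/a_E)$. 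Your sketch asserts the conclusion of this chain (``this canonical base is an imaginary object rather than a genuine hyperimaginary'') without any of the work that makes it true, so as it stands the converse direction is not proved.
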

In general outline, the proof that 1,2$\implies$3 follows Adler's proof \cite{adler-thesis}that \emph{full} elimination of hyperimaginaries is sufficient for $\indep=\thind$ in a simple theory. The proof of the converse also uses some definitions from \cite{adler-thesis}, but the arguments seem to new.

\subsection{Forking and \th-Forking given WEHI}

Following both \cite{adler-thesis} and \cite{ealy-thesis}, we work in the context of abstract ternary relations on subsets of a model.
\begin{defn}
    Let $\oind$ be a ternary relation on subsets of a model. $\oind$ is an \textit{independence relation}
    if the following properties hold.
    For the following, $A,B,C,D$ are arbitrary sets
    \begin{itemize}
        \item Invariance: $A \oind_C B$ and $A'B'C' \equiv ABC \implies A' \oind_{C'} B'$
        \item Monotonicity: $A \oind_C B$, $A' \subset A$, $B' \subset B \implies A' \oind_C B'$
        \item Base Monotonicity: If $D \subset C \subset B$ and $A \oind_D B$ then $A \oind_C B$
        \item Transitivity: If $D \subset C \subset B$, $B \oind_C A$ and $C \oind_D A$ then $B \oind_D A$
        \item Normality: $A \oind_C B \implies AC \oind_C B$
        \item Extension: If $A \oind_C B$ and $\hat B \supset B$ then there is $A' \equiv_{BC} A$
        such that $A' \oind_C \hat B$
        \item Finite Character: If $A_0 \oind_C B$ for all finite $A_0 \subset A$ then $A \oind_C B$
        \item Local Character: For every $A$ there is cardinal $\kappa(A)$ such that for any $B$,
        there is a subset $C \subset B$ of cardinality $|C| < \kappa(A)$ such that $A \oind_C B$
        \item Anti-Reflexivity: $a \oind_B a$ implies $a \in\acl(B)$
    \end{itemize}
\end{defn}

\begin{defn}
	We say an independence relation $\oind$ has the \textit{intersection property} if whenever $C_1,C_2 \subseteq B$ such that
    $A \oind_{C_1} B$ and $A \oind_{C_2} B$ then
    \[
    A \oind_{\acl^\eq(C_1) \cap \acl^\eq(C_2)} B.
    \]
    An independence relation (which, here, is necessarily anti-reflexive) is called \emph{canonical} just in case it has the intersection property.
\end{defn}

An independence relation having the intersection property implies the independence relation is
in fact  \th-forking.
\begin{thm}[\cite{adler-thesis}, Theorem 3.3]\label{thm:intersection-is-thorn-forking}
    If $\oind$ is a non-trivial independence relation with the intersection property,
    then $\oind=\thind$.
\end{thm}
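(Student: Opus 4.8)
The plan is to prove the two inclusions $\thind\subseteq\oind$ and $\oind\subseteq\thind$ (as ternary relations on subsets) separately, and to isolate the intersection property as the ingredient responsible for the second inclusion only. The first inclusion holds for \emph{every} non-trivial independence relation and is just the assertion that $\thind$ is the finest such relation; the second is where canonicity does the real work.

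For $\thind\subseteq\oind$ --- equivalently, that failure of $A\oind_C B$ forces $\tp(A/BC)$ to \th-fork over $C$ --- I would argue directly that \th-independence is the finest independence relation, which is the general half and needs none of the intersection hypothesis. Assuming $A\oind_C B$ fails, I would use local and finite character to replace the configuration by a finite witnessing piece, and then use extension together with anti-reflexivity to manufacture, after passing to a suitable base extension $Cd$, a formula of $\tp(A/BC)$ that strongly divides over $Cd$ and hence \th-divides over $C$. Since only the general axioms are used, I would treat this as the routine direction.

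For the reverse inclusion $\oind\subseteq\thind$ I would assume $A\oind_C B$ and aim to show $A\thind_C B$. If this failed, \th-forking would be witnessed by some auxiliary tuple $d$ over which $\tp(A/BC)$ strongly divides. The key sub-lemma --- the standard ``independence implies consistency'' argument --- shows that realizations of a strongly dividing formula must be dependent on its parameters: applying extension and invariance repeatedly produces $k$ distinct $Cd$-conjugates $b_1,\dots,b_k$ of the parameter with $A$ still $\oind$-independent from $b_1\cdots b_k$ over $Cd$, so $\bigwedge_i\varphi(x,b_i)$ is realized, contradicting $k$-inconsistency. Hence $A\oind_{Cd} B$ fails. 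The difficulty is that $d$ is \emph{arbitrary}, not contained in $\acl^\eq(BC)$, so base monotonicity cannot by itself push this dependence from the enlarged base $Cd$ back down to $C$. This is exactly the gap the intersection property closes: using local character I would first pass to a canonical smallest base $C^\ast\subseteq\acl^\eq(BC)$ with $A\oind_{C^\ast}B$ --- well defined precisely because the intersection property lets one intersect any two admissible bases and descend --- and then, intersecting $\acl^\eq(C^\ast d)$ against $\acl^\eq(BC)$, force the base genuinely responsible for the dependence back inside $\acl^\eq(BC)$, where base monotonicity finally contradicts $A\oind_{C^\ast}B$.

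The step I expect to be the main obstacle is this last one: showing that canonicity really does neutralize the base-extension freedom built into \th-dividing. Making ``locate the canonical base and intersect away $d$'' precise requires coordinating extension (to move $d$ off the configuration), base monotonicity (to travel between $C$, $C^\ast$, and $C^\ast d$), and the intersection property (to descend from $\acl^\eq(C^\ast d)\cap\acl^\eq(BC)$ back to $\acl^\eq(C^\ast)$) all at once, checking at each stage that the relevant sets are nested as the axioms demand. Non-triviality is invoked only to exclude the degenerate independence relations, so that the canonical base $C^\ast$ is genuinely meaningful.
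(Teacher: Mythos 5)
The paper never proves this theorem---it is imported from Adler's thesis as a black box---so your proposal has to be measured against the standard argument rather than against anything in the text. Measured that way, it contains a fundamental error: you have the two inclusions exactly reversed. The inclusion that holds for \emph{every} independence relation, with no intersection hypothesis at all, is $\oind\subseteq\thind$, i.e.\ $A\oind_C B\implies A\thind_C B$: \th-independence is the \emph{coarsest} (weakest) independence relation, not the finest. Indeed, the argument you sketch for your ``hard'' direction is, up to the sentence ``Hence $A\oind_{Cd}B$ fails,'' essentially the correct proof of this free inclusion, and the obstacle you then describe is not there. One never needs to push a dependence from the enlarged base $Cd$ back down to $C$: assuming $A\oind_C B$ and that some $\varphi(x,b)\in\tp(A/BC)$ strongly divides over $Cd$, the extension axiom supplies $A'\equiv_{BC}A$ with $A'\oind_C BCd$; base monotonicity and monotonicity give $A'\oind_{Cd}b$; and since $k$-inconsistency places $b$ inside a finite $A'Cd$-invariant set of realizations of $\tp(b/Cd)$, we get $b\in\acl^\eq(A'Cd)\cap\acl^\eq(bCd)$, which together with $A'\oind_{Cd}b$ and anti-reflexivity forces $b\in\acl^\eq(Cd)$, contradicting strong division. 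Extension exists precisely to absorb the auxiliary parameter $d$ into the right-hand side; the intersection property plays no role in this direction.

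The inclusion you dismiss as routine, $\thind\subseteq\oind$---that failure of $A\oind_C B$ forces $\tp(A/BC)$ to \th-fork over $C$---is exactly where the intersection property is indispensable, and your sketch for it cannot work. From the bare failure of $A\oind_C B$, ``extension together with anti-reflexivity'' will not manufacture a strongly dividing formula; if it could, every independence relation would coincide with $\thind$, and in particular $\indep=\thind$ would hold in every simple theory, which is precisely the question this paper (and its closing conjecture on canonicity of $\thind$) leaves open. The actual argument runs through weak canonical bases: the intersection property is equivalent (Adler, Theorem 3.20, quoted later in the paper) to the existence of $\WCb\bigl(A/\acl^\eq(BC)\bigr)$, the smallest algebraically closed $D\subseteq\acl^\eq(BC)$ with $A\oind_D BC$; when $A\oind_C B$ fails one has $D\not\subseteq\acl^\eq(C)$, and an element of $D\setminus\acl^\eq(C)$ is what gets turned into a witness of \th-forking. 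So the intersection property does its work in the direction you labelled free, and the direction on which you concentrated your effort never needed it.
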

The converse of this theorem does not hold: there are (non-simple) theories where \th-forking does not have the intersection property \cite{adler-thesis}. On the other hand, it is shown in \cite{adler-thesis} that $\indep$ is canonical provided that the simple theory in question eliminates hyperimaginaries.  Whether $\thind$ is canonical in \emph{every} simple theory is still beyond our knowledge, but we do have the following generalization:

\begin{thm}\label{thm:dwip-intersection}
    Suppose $T$ is simple and has WEHI.
    Then $\indep$ has the intersection property.
\end{thm}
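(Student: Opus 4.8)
The plan is to reduce the intersection property to two ingredients: the standard theory of canonical bases in simple theories (available without any elimination hypothesis), and a closure-theoretic lemma where WEHI does the real work. Throughout I would work in $\MM^\heq$ and freely use that forking independence extended to hyperimaginaries is invariant under replacing a base $X$ by $\bdd(X)$, so that $A \indep_X B \iff A \indep_{\bdd(X)} B$. Here $A,B,C_1,C_2$ are the (real and imaginary) sets from the statement, while canonical bases are permitted to be genuine hyperimaginaries.

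First I would record the canonical-base characterization of forking in a simple theory: for $C \subseteq B$ one has $A \indep_C B$ if and only if $\mathrm{Cb}(A/B) \subseteq \bdd(C)$. The forward direction is the usual fact that the canonical base is preserved under nonforking extensions, so $A \indep_C B$ forces $\mathrm{Cb}(A/B) = \mathrm{Cb}(A/C) \subseteq \bdd(C)$; the reverse direction follows from $A \indep_{\mathrm{Cb}(A/B)} B$ together with base monotonicity, since $\mathrm{Cb}(A/B) \subseteq \bdd(C) \subseteq \bdd(B)$. Granting this, the hypotheses $A \indep_{C_1} B$ and $A \indep_{C_2} B$ immediately give $\mathrm{Cb}(A/B) \subseteq \bdd(C_1) \cap \bdd(C_2)$, and it suffices to show $\mathrm{Cb}(A/B) \subseteq \bdd(D)$ where $D = \acl^\eq(C_1) \cap \acl^\eq(C_2)$; the reverse direction of the characterization then yields $A \indep_D B$, which is the intersection property.

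So the crux is the purely closure-theoretic claim
\[
\bdd(C_1) \cap \bdd(C_2) \subseteq \bdd\big(\acl^\eq(C_1) \cap \acl^\eq(C_2)\big),
\]
and this is exactly where I would invoke WEHI. I would first prove the auxiliary fact that bounded closure agrees with algebraic closure on imaginaries, i.e. $\bdd(X) \cap \MM^\eq = \acl^\eq(X)$ for every $X$: an imaginary with a bounded orbit over $X$ has, by saturation, a finite orbit, hence is algebraic over $X$. Now take a hyperimaginary $e \in \bdd(C_1) \cap \bdd(C_2)$. By WEHI there is a set of imaginaries $B_e \subseteq \MM^\eq$ with $\bdd(B_e) = \bdd(e)$; in particular $B_e \subseteq \bdd(e)$, and since $e \in \bdd(C_i)$ we get $B_e \subseteq \bdd(e) \subseteq \bdd(C_i)$ for $i = 1,2$. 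As $B_e$ consists of imaginaries, the auxiliary fact gives $B_e \subseteq \bdd(C_i) \cap \MM^\eq = \acl^\eq(C_i)$, hence $B_e \subseteq D$. Finally $e \in \bdd(e) = \bdd(B_e) \subseteq \bdd(D)$, which establishes the displayed inclusion.

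The main obstacle is conceptual rather than computational: one must notice that the natural intersection property for forking in an arbitrary simple theory is the one relative to $\bdd$ (which holds via canonical bases unconditionally), and that WEHI is precisely the hypothesis needed to replace $\bdd(C_1) \cap \bdd(C_2)$ by $\bdd(\acl^\eq(C_1) \cap \acl^\eq(C_2))$. The one subtlety to handle with care is the interaction of the two closure operators $\bdd$ and $\acl^\eq$, captured by the auxiliary fact above; everything else is bookkeeping with the independence axioms and standard properties of canonical bases. This mirrors Adler's argument under full EHI, the only change being that WEHI supplies a $\bdd$-witness $B_e$ for $e$ rather than the $\dcl$-witness available under EHI.
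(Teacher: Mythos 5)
Your proof is correct, and its skeleton matches the paper's: both identify $e=\Cb(A/B)$, place it in $\bdd(C_1)\cap\bdd(C_2)$ by properties of canonical bases, replace it by a set of imaginaries interbounded with it, observe that bounded imaginaries are algebraic so that this set lands in $\acl^\eq(C_1)\cap\acl^\eq(C_2)$, and then move the base down. The genuine difference is in how the last step is executed. The paper routes through DWIP: it invokes Proposition~\ref{prop:DWIP--WEHI} to produce $\hat e\subseteq\dcl(e)$ with $e\in\bdd(\hat e)$ and $A\indep_{\hat e}e$, and concludes by transitivity from $A\indep_{\hat e}e$ and $A\indep_e B$. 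You instead apply WEHI verbatim to $e$ (your witness $B_e$ satisfies only $\bdd(B_e)=\bdd(e)$ rather than $B_e\subseteq\dcl(e)$, which suffices since landing in $\acl^\eq(C_i)$ uses only $B_e\subseteq\bdd(C_i)$), isolate the purely closure-theoretic inclusion $\bdd(C_1)\cap\bdd(C_2)\subseteq\bdd\bigl(\acl^\eq(C_1)\cap\acl^\eq(C_2)\bigr)$ as the sole place where WEHI acts, and finish with the two-way characterization $A\indep_X B\iff\Cb(A/B)\subseteq\bdd(X)$ in place of an explicit transitivity step. Your version buys a cleaner separation of the canonical-base input from the combinatorics of the closure operators and shows the theorem needs only WEHI itself, not the DWIP reformulation; the paper's version buys a demonstration of DWIP in action, which is the article's organizing theme. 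Two small points, both shared with the paper's own write-up and not affecting correctness: the equivalence $A\indep_C B\iff\Cb(A/B)\subseteq\bdd(C)$ is properly a statement about Lascar strong types, and your final base $D=\acl^\eq(C_1)\cap\acl^\eq(C_2)$ need not sit inside $B$, so the ``reverse direction'' should be applied with $\acl^\eq(B)$ in place of $B$ before restricting.
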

\begin{proof}
    Choose any sets $A$ and $B$.
    Suppose there are subsets $C_1,C_2 \subseteq B$ with
    $A \indep_{C_1} B$ and $A \indep_{C_2} B$.
    We need to show 
    $A \indep_{\acl(C_1) \cap \acl(C_2)} B$.

    Since $T$ is simple there is hyperimaginary $e = \Cb(A/B)$ and $e \in \bdd(B)$.
    Moreover, $e \in \bdd(C_1) \cap \bdd(C_2)$, by using properties of canonical bases.
    Using DWIP we get a set of imaginary elements $\hat e \subset \dcl(e) \cap \M^\eq$ such that
    $e \subset \bdd(\hat e)$ and for all sets $U$,
    $U \indep_{\hat e} e$.
    In particular, $A \indep_{\hat e} e$.
    Since $\hat e \subset \dcl(e) \subset \bdd(C_1)$
    and $\hat e$ is a tuple of real elements, $\hat e \subseteq \acl(C_1)$.
    The same reasoning also shows $\hat e \subseteq \acl(C_2)$.
    Thus $\hat e \subseteq \acl(C_1) \cap \acl(C_2)$.
    Moreover, transitivity of forking gives
    \[
    A \indep_{\hat e} e \quad \land \quad A \indep_e B \implies A \indep_{\hat e} eB.
    \]
    And so,
    $A \indep_{\acl(C_1) \cap \acl(C_2)} B$
    since $\hat e \in \acl(C_1) \cap \acl(C_2)$.
\end{proof}
\begin{cor}\label{cor:dwip-forking-is-thorn}
    If $T$ is simple and has WEHI, forking is equivalent to \th-forking and $\thind$ is canonical.
\end{cor}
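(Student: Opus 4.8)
The plan is to obtain the corollary by feeding the output of Theorem~\ref{thm:dwip-intersection} directly into Adler's criterion, Theorem~\ref{thm:intersection-is-thorn-forking}. First I would record the standing fact that in a simple theory, forking independence $\indep$ is itself an independence relation in the precise sense defined above: invariance, monotonicity, base monotonicity, transitivity, normality, extension, finite character, local character, and anti-reflexivity are exactly the Kim--Pillay axioms together with their standard consequences, all of which hold for $\indep$ in any simple $T$. I would also note that $\indep$ is non-trivial, so that Adler's hypotheses are met; this is automatic for forking in a simple theory and does not require WEHI.

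Next, the hypothesis that $T$ is simple and has WEHI lets me invoke Theorem~\ref{thm:dwip-intersection} without modification: $\indep$ has the intersection property. By the definition of canonicity, this is precisely the assertion that $\indep$ is canonical. At this point the two substantive inputs---that $\indep$ is a genuine independence relation and that it enjoys the intersection property---are both in hand, and the rest of the argument is purely formal.

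I would then apply Theorem~\ref{thm:intersection-is-thorn-forking} with $\oind = \indep$: a non-trivial independence relation possessing the intersection property must coincide with $\thind$, which yields $\indep = \thind$. Finally, since $\indep$ has the intersection property and $\indep = \thind$, transporting the intersection property across this equality shows that $\thind$ is canonical as well, establishing both assertions of the corollary at once.

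The only step requiring any genuine care is the bookkeeping of the first paragraph: one must verify that the axioms assumed in the statement of Theorem~\ref{thm:intersection-is-thorn-forking} are matched by the known properties of forking in a simple theory---in particular that anti-reflexivity (encoding strictness) holds and that $\indep$ is non-trivial in Adler's sense. Once that alignment is confirmed, the corollary is a formal consequence of the two cited theorems, with essentially all of the mathematical content already discharged in Theorem~\ref{thm:dwip-intersection}.
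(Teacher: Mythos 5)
Your proposal is correct and follows exactly the paper's own route: apply Theorem~\ref{thm:dwip-intersection} to get the intersection property for $\indep$, feed that into Theorem~\ref{thm:intersection-is-thorn-forking} to conclude $\indep=\thind$, and then transfer the intersection property across this equality to see that $\thind$ is canonical. The only difference is that you spell out the (standard) verification that forking in a simple theory is a non-trivial independence relation in Adler's sense, which the paper leaves implicit.
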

\begin{proof}
    Combine Theorems~\ref{thm:dwip-intersection} and~\ref{thm:intersection-is-thorn-forking}.
    Since forking has the intersection property and is equivalent to \th-forking, \th-forking also
    has the intersection property.
\end{proof}

\subsection{When forking and \th-forking coincide}

The results in this section address the weak elimination of hyperimaginaries,
specifically we show the converse of Corollary~\ref{cor:dwip-forking-is-thorn}.

In this subsection, we demonstrate the converse of Corollary \ref{cor:dwip-forking-is-thorn}
\begin{thm}\label{thm:int-wehi}
Let $T$ be simple.
If forking and \th-forking are equivalent (on imaginaries)
and \th-forking is canonical, then $T$ has WEHI.
\end{thm}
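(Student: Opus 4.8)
The plan is to prove the (logically equivalent) statement that $T$ has DWIP; weak elimination of hyperimaginaries then follows immediately from Corollary~\ref{cor:DWIP-is-WEHI}. So I fix hyperimaginaries $a,b$ and a set $C$ with $a\nindep_C b$, and set $B=\bdd(bC)$, so that $a\nindep_C B$. The entire argument reduces to one existence claim: there is a set of \emph{imaginaries} $C_0\subseteq\acl^\eq(B)$ with $a\indep_{C_0}B$. Granting this, the witness demanded by DWIP is extracted by purely formal manipulations of the independence axioms. First, $a\nindep_C C_0$: if instead $a\indep_C C_0$, then normality and symmetry give $a\indep_C CC_0$, base monotonicity applied to $a\indep_{C_0}B$ (along $C_0\subseteq CC_0\subseteq B$, which is legitimate since $C_0\subseteq\acl^\eq(B)\subseteq B$ and $CC_0\subseteq\bdd(B)=B$) gives $a\indep_{CC_0}B$, and transitivity along $C\subseteq CC_0\subseteq B$ then yields $a\indep_C B$, contradicting the hypothesis. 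Since the elements of $C_0\subseteq\acl^\eq(B)$ are imaginaries bounded over $bC$, they lie in $\acl(bC)$, so by the finite character of forking there is a finite imaginary tuple $d\in C_0\subseteq\acl(bC)$ with $a\nindep_C d$. This $d$ is exactly the element required by DWIP.

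The mathematical content of the theorem is therefore concentrated entirely in the existence claim, and it is here that both hypotheses are consumed. The natural way to locate an imaginary base is through the canonical base: let $e=\Cb(a/B)$, a hyperimaginary with $a\indep_e B$ and $e\in\bdd(B)$. Any imaginary base $C_0$ as above automatically bounds $e$ (the canonical base localizes to any nonforking restriction), so producing one amounts to \emph{weakly eliminating the canonical base $e$ by imaginaries sitting inside $B$}. The canonical candidate is $E=\bdd(e)\cap\acl^\eq(B)$; the inclusion $E\subseteq\bdd(e)$ is trivial, and the crux is to prove $a\indep_E B$, after which $C_0=E$ does the job. In other words, one must shrink the base of the nonforking statement $a\indep_e B$ from the hyperimaginary $e$ all the way into the imaginary sort. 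To do this I would invoke the hypotheses in the following shape: on imaginaries $\indep=\thind$ and $\thind$ is canonical, so $\indep$ satisfies the intersection property for imaginary bases; and since simple theories are rosy, $\thind$ (hence $\indep$, on imaginaries) admits bases inside $\acl^\eq$. One then realizes $E$ as the intersection of $\acl^\eq$-closed bases of $a$ over $B$ drawn from $\acl^\eq(B)$ and applies the intersection property to conclude that the intersection is still a base, i.e. that $a\indep_E B$.

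The step I expect to be the main obstacle is precisely this descent from the hyperimaginary $e$ to the imaginary sort, together with the bootstrapping of the two hypotheses — which are stated \emph{on imaginaries} — up to the hyperimaginary elements $a$ and $B$. The delicate point is that the existence of even a \emph{single} imaginary forking-base for a hyperimaginary $a$ is not given for free; it is essentially equivalent to the weak elimination we are after, so it cannot be produced by a soft appeal to local character (which a priori only yields a base of bounded size somewhere in $\bdd(B)$, possibly genuinely hyperimaginary). My intended route is to work throughout with $\Cb(a/B)$, approximate $e$ by the imaginaries it bounds, and feed finitely many independent conjugate bases taken from $\acl^\eq(B)$ into the intersection property so as to force their intersection down to $E$; the combination of rosiness (to guarantee imaginary bases at the imaginary level) with canonicity (to intersect them) is what should recover $e$ from its imaginary trace. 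Verifying that these imaginary approximations genuinely reconstruct $e$ — equivalently, that $E$ is a bona fide base — is the heart of the matter, the surrounding localization and finite-character steps being routine consequences of the independence-relation axioms recorded above.
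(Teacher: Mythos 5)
Your reduction of the theorem to DWIP, and from DWIP to the single existence claim (an imaginary set $C_0\subseteq\acl^\eq(B)$ with $a\indep_{C_0}B$), is reasonable and is in the spirit of the paper, which likewise concentrates all the content in weakly eliminating canonical bases. But the proposal stops exactly where the mathematics begins: the claim $a\indep_E B$ for $E=\bdd(e)\cap\acl^\eq(B)$ is announced as ``the crux'' and then only described as an intended route (``feed finitely many independent conjugate bases\dots into the intersection property''), never carried out. Two concrete obstacles are left unaddressed. First, your $a$ and $B=\bdd(bC)$ are genuinely hyperimaginary, while both hypotheses ($\indep=\thind$ and canonicity) are stated on imaginaries; nothing in the proposal gets the intersection property to apply to the triple $(a,E,B)$ you actually need. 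Second, even granting that, the intersection property only lets you intersect finitely many imaginary bases at a time, and you give no argument that any finite (or even bounded) family of conjugate bases inside $\acl^\eq(B)$ intersects down to $E$, nor that the intersection still supports $a$ over $B$; your own remark that producing a single imaginary base for a hyperimaginary ``is essentially equivalent to the weak elimination we are after'' concedes that this step is not routine, yet it is precisely the step that is missing.

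The paper closes this gap with machinery you did not invoke: the \emph{weak canonical base}. By Adler's theorem, the intersection property guarantees that every type $p$ over an algebraically closed set of imaginaries has a smallest algebraically closed imaginary base $D=\WCb(p)$, and Lemma~\ref{lem:intprop-wecb} shows $D\subseteq\bdd(\Cb(p))$ by a direct indiscernible-sequence argument: if $(D_i)_{i<\omega}$ were an $e$-indiscernible sequence of distinct conjugates of $D$ over $e=\Cb(p)$, then $\bigcup_i p'(x,D_i)$ would be a common nonforking extension whose weak canonical base is simultaneously equal to every $D_i$, forcing $D_i=D_j$. This is the concrete mechanism that descends from the hyperimaginary $e$ to its imaginary trace, and it has no counterpart in your write-up. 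The hyperimaginary bootstrapping you flagged is handled separately by Lemmas~\ref{lem:wecb-wehcb} and~\ref{lem:wecb-wehi}, which reduce arbitrary hyperimaginaries to canonical bases of types over imaginaries before the above argument is applied. To repair your proposal you would need to supply substitutes for both of these ingredients; as written it is a correct plan with the central argument absent.
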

The issue is that we only assume the equivalence of the two notions on {\em imaginary elements} -- it is not \emph{a priori} clear that that this equality should be enough to reduce hyperimaginaries to elements of $M^\eq$.

We will use $\acl^\eq$ and $\dcl^\eq$ to indicate when the closure operations are on $M^\eq$, and assume the unadorned versions
$\acl$ and $\dcl$ always include hyperimaginaries.

The theorem is done as a sequence of lemmas.
The first is easy, and is essentially identical to Lemma 3.6.4 in \cite{Wagner-SimpleTheories}.
\begin{lemma}\label{lem:wecb-wehi}
If $T$ weakly eliminates every hyperimaginary {\em arising as a canonical base}, then
$T$ weakly eliminates every hyperimaginary.
\end{lemma}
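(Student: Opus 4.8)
The plan is to show that every hyperimaginary is interbounded with a canonical base, so that the hypothesis applies directly, hyperimaginary by hyperimaginary. Concretely, I would fix an arbitrary hyperimaginary $e$ and write $e = \bar a_E$ for a (countable) tuple $\bar a$ of imaginaries and a type-definable equivalence relation $E$; in particular $e \in \dcl(\bar a)$. Then I would set $c := \Cb(\bar a / e)$ and aim to prove that $\bdd(c) = \bdd(e)$ and that $c$ is a hyperimaginary \emph{arising as a canonical base}. Granting this, the hypothesis supplies a set $B \subseteq \M^\eq$ with $\bdd(B) = \bdd(c) = \bdd(e)$, which is exactly a weak elimination of $e$. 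Since $\bdd$ is a closure operator, this localization to a single $e$ at a time is legitimate and no compatibility between different canonical bases needs to be checked.

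The inclusion $c \in \bdd(e)$ is the standard fact that the canonical base of a type over $e$ lies in $\bdd(e)$ (one can also see this by computing $c$ from a Morley sequence of $\tp(\bar a/e)$ over $e$). The real content is the reverse inclusion $e \in \bdd(c)$, and here I would use the defining property of the canonical base, namely $\bar a \indep_c e$, together with $e \in \dcl(\bar a)$. By normality $\bar a c \indep_c e$, and since $e \in \dcl(\bar a) \subseteq \dcl(\bar a c)$, monotonicity yields the self-independence $e \indep_c e$; anti-reflexivity of forking independence then gives $e \in \bdd(c)$. If one prefers to keep the base real, one can instead let $c$ be the canonical base computed from an $e$-Morley sequence $I$ in $\tp(\bar a/e)$: each member of $I$ realizes $\tp(\bar a/e)$ and hence lies in the $E$-class $e$, so $e \in \dcl(I)$, and the identical self-independence argument applies. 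This matches the Morley-sequence formulation underlying Lemma~3.6.4 in \cite{Wagner-SimpleTheories}.

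The main obstacle here is a mild one, as the lemma is meant to be routine: it is really just the bookkeeping of two things at once, verifying that the extracted object $c$ genuinely arises as a canonical base (so that the hypothesis is applicable to it) and checking both directions of $\bdd(c) = \bdd(e)$. The only genuine idea is the self-independence trick $e \indep_c e$, which exploits precisely that $e$ is \emph{definable} from the tuple $\bar a$ whose type we canonicalize; everything else is invariance, normality, monotonicity, and anti-reflexivity of $\indep$, together with the standard property $\Cb(\bar a/e) \in \bdd(e)$.
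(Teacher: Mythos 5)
Your proposal is correct and follows essentially the same route as the paper: both fix an arbitrary hyperimaginary $e=\bar a_E$, pass to $d=\Cb(\bar a/e)$, and use $\bar a\indep_d e$ together with $e\in\dcl(\bar a)$ to get the self-independence $e\indep e$ over the (weak eliminant of the) canonical base, hence $e\in\bdd(\cdot)$, while the reverse inclusion comes from $d\in\dcl(e)$ (you use the weaker $d\in\bdd(e)$, which suffices). The only cosmetic difference is order of operations — the paper first replaces $d$ by its imaginary set $C$ and then runs the independence argument over $C$, whereas you establish $\bdd(d)=\bdd(e)$ first and then invoke the hypothesis — and the worry you raise about $d$ being a canonical base over a hyperimaginary domain is exactly what the paper's subsequent Lemma~\ref{lem:wecb-wehcb} is there to dispose of.
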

\begin{proof}
Consider an arbitrary hyperimaginary $a_E$ (where $a$ may be an infinite tuple here)
and $E$ is a type-definable equivalence relation.
Let $d = \Cb(a/a_E)$.
Then, $a \indep_d a_E$.
By hypothesis, there is a set of imaginary elements $C$ such that $\bdd(C) = \bdd(d)$,
so also $a \indep_C a_E$.
Since $a_E \in \dcl(a)$, we have $a_E \in \bdd(C)$, but also, by definition $d \in \dcl(a_E)$, 
so $C \subset \bdd(d) \subset \bdd(a_E)$.
Hence $\left.\bdd(C) = \bdd(a_E)\right.$,
and $a_E$ is weakly eliminated.
\end{proof}

A possible objection to the above proof is that when we talk of a canonical base
of a type, we are thinking of a type over a domain of imaginary elements and not a hyperimaginary
element as used in the previous proof.
This is not, in fact, an issue because of the following lemma.
\begin{lemma}\label{lem:wecb-wehcb}
If $T$ weakly eliminates canonical bases of types over imaginaries,
   then $T$ also weakly eliminates canonical bases of types over hyperimaginaries.
\end{lemma}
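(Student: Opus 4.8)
The plan is to reduce a canonical base over a hyperimaginary domain to a canonical base over an \emph{imaginary} domain by passing to a Morley sequence. Let $d = \Cb(a/B)$, where $B$ is a set of hyperimaginaries and $a$ is a (possibly infinite) tuple of real or imaginary elements; I want to weakly eliminate $d$. First I would fix a Morley sequence $I = (a_i : i < \omega)$ over $B$ in the Lascar strong type of $a$ over $B$. The crucial structural observation is that each $a_i$ is a realization of $\tp(a/B)$, hence a tuple from $\M^{\eq}$; consequently $I$ is a \emph{set of imaginary elements}, even though the base $B$ over which it was built is hyperimaginary.

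Next I would invoke the standard theory of canonical bases over (hyper)imaginary domains in simple theories (see \cite{Wagner-SimpleTheories}) to identify $d$ with a canonical base computed over $I$. Concretely, the facts I would marshal are that $\Cb(a/B) \in \dcl(I)$ and that $a \indep_d B$. Appending $a$ to the Morley sequence gives $a \indep_B I$, and transitivity of non-forking then yields $a \indep_d I$; since also $d \in \bdd(I)$, minimality of the canonical base gives $\Cb(a/I) \subseteq \bdd(d)$. The reverse inclusion $d \subseteq \bdd(\Cb(a/I))$ follows from the characterization of canonical bases by the automorphisms fixing them, and the net effect is the interbddability $\bdd(\Cb(a/I)) = \bdd(\Cb(a/B)) = \bdd(d)$.

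With this in hand the conclusion is immediate: $\Cb(a/I)$ is a canonical base of a type over the imaginary set $I$, so the hypothesis that $T$ weakly eliminates canonical bases of types over imaginaries produces a set $C$ of imaginary elements with $\bdd(C) = \bdd(\Cb(a/I)) = \bdd(d)$. Hence $d$ is weakly eliminated, exactly as required, and chaining this with Lemma~\ref{lem:wecb-wehi} shows that weak elimination for canonical bases over imaginaries already forces full WEHI.

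The step I expect to be the main obstacle is the interbddability $\bdd(\Cb(a/B)) = \bdd(\Cb(a/I))$. One must check that the machinery of Morley sequences, Lascar strong types, and canonical bases all functions over a genuinely \emph{hyperimaginary} base $B$, and, most importantly, that the Morley sequence $I$ can be taken to live inside $\M^{\eq}$ rather than $\M^{\heq}$ — this is what makes $\Cb(a/I)$ eligible for the imaginary-domain hypothesis. Once these standard but delicate facts about canonical bases in simple theories are assembled, the passage from the hyperimaginary case to the imaginary case is purely formal.
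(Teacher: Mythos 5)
Your proof is correct in substance, but it takes a genuinely different route from the paper's. The paper exploits the fact that a hyperimaginary $e=\bar a_E$ has a representative imaginary tuple $\hat e$ with $e\in\dcl(\hat e)$: it rewrites $p=\tp(x/e)$ as a partial type $\pi(x,\hat e)$, completes $\pi$ to some $p'\in S(\hat e)$ not forking over $e$, and then uses a single standard fact --- canonical bases of a Lascar strong type and of a nonforking extension are interbounded --- so that the hypothesis applies to $\Cb(p')$ and transfers to $\Cb(p)$. You instead move \emph{down} to a Morley sequence $I$ of realizations of the type, using that $I\subseteq\M^\eq$ even when the base is hyperimaginary and that $\Cb(a/B)\in\dcl(I)$. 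Both reductions work; yours costs more (the Morley-sequence characterization of canonical bases plus a two-sided interbddability argument) but has the mild advantage of never unpacking the hyperimaginary base into a representative tuple, so it applies verbatim to an arbitrary set of hyperimaginaries as base. Two remarks. First, your justification of the reverse inclusion $d\subseteq\bdd(\Cb(a/I))$ via ``the automorphism characterization'' is the loosest step; the cleanest way to get both inclusions at once is to note that $\tp(a/BI)$ is a common nonforking extension of $\tp(a/B)$ and of $\tp(a/\bdd(I))$ --- using $a\indep_B I$ for the first, and $a\indep_d BI$ together with $d\in\dcl(I)$ and base monotonicity for the second --- whence $\Cb(a/B)$ and $\Cb(a/I)$ are each interbounded with $\Cb(a/BI)$. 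Second, like the paper's own proof, your argument tacitly assumes the realizing tuple $a$ lives in $\M^\eq$ (otherwise $I$ would again consist of hyperimaginaries); this restriction is harmless, since it is all that is needed for the application in Lemma~\ref{lem:wecb-wehi}.
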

\begin{proof}
Let $p(x,e)$ be a type over hyperimaginary $e$.
Then $p$ is equivalent to a partial type $\pi(x,\hat e)$ over a possibly infinite imaginary tuple $\hat e$ that is
a member of the equivalence class named by $e$.
Let $p'\in S(\hat e)$ be a completion of $\pi$ that does not fork over $e$.
Then $p'$ is a non-forking extension of $p$, and so
%
$\bdd(\Cb(p')) = \bdd(\Cb(p))$.
Since $p'$ is defined over a set of imaginary elements, the hyperimaginary $d = \Cb(p')$ is weakly eliminated.
Hence the canonical base of $p$, being interbounded with $d$, is also weakly eliminated.
\end{proof}
In the presence of the intersection property, there is a smallest algebraically closed set over which a type does not fork.
We call this set the weak canonical base.
\begin{defn}
Let $p$ be a type over $B \subset M^\eq$.
The \textit{weak canonical base}, $\WCb(p)$ is the smallest algebraically closed subset (of imaginaries) $C \subset B$ such that
$p$ does not fork over $C$.
\end{defn}
The following properties for weak canonical bases parallel those for canonical bases.
\begin{prop}[\cite{adler-thesis}, Exercise 3.21]
    For a tuple $a$ and sets $B$,$C$ the following are equivalent.
\begin{enumerate}
\item $\bar a \indep_C B$
\item $\WCb(\bar a / BC) \subset \acl (C)$
\item $\WCb(\bar a / BC) = \WCb(\bar a/C)$
\end{enumerate}
\end{prop}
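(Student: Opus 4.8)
The plan is to establish the cycle of equivalences by exploiting the defining minimality of the weak canonical base together with the standard independence axioms (monotonicity, base monotonicity, transitivity) available for $\indep$ in the simple theory $T$; the intersection property is needed only to guarantee that $\WCb$ is well-defined, so it plays no active role beyond that. Throughout I write $W = \WCb(\bar a/BC)$ and $W' = \WCb(\bar a/C)$, recalling that by definition $W$ is the smallest algebraically closed subset of $\acl(BC)$ over which $\tp(\bar a/BC)$ does not fork, and that $W'\subseteq\acl(C)$ always.

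First I would prove $(1)\Leftrightarrow(2)$. For $(1)\Rightarrow(2)$: if $\bar a\indep_C B$ then $\tp(\bar a/BC)$ does not fork over $\acl(C)$, which is an algebraically closed subset of $\acl(BC)$, so minimality of $W$ forces $W\subseteq\acl(C)$. For $(2)\Rightarrow(1)$: since $\tp(\bar a/BC)$ does not fork over $W$ and $W\subseteq\acl(C)\subseteq\acl(BC)$, base monotonicity upgrades non-forking over $W$ to non-forking over $\acl(C)$, which is exactly $\bar a\indep_C B$.

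Next I would handle $(2)\Leftrightarrow(3)$. The implication $(3)\Rightarrow(2)$ is immediate, since $W'\subseteq\acl(C)$ always and $W=W'$ then gives $W\subseteq\acl(C)$. For $(2)\Rightarrow(3)$ I would prove the two inclusions separately. To get $W'\subseteq W$: restricting the right-hand side from $BC$ to $C$ via monotonicity shows $\tp(\bar a/C)$ does not fork over $W$, and since $W\subseteq\acl(C)$ is algebraically closed, minimality of $W'$ yields $W'\subseteq W$. To get $W\subseteq W'$: I use that $(2)$ already gives $(1)$, namely $\bar a\indep_C B$; combining $\bar a\indep_{W'} C$ (the defining property of $W'$) with $\bar a\indep_C BC$ through transitivity of forking produces $\bar a\indep_{W'} BC$, and minimality of $W$ then gives $W\subseteq W'$.

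The main obstacle is precisely this last inclusion $W\subseteq W'$: unlike the other steps, which drop straight out of monotonicity and the minimality clauses, it is the only place where one must genuinely chain two independence statements. It requires feeding the global non-forking $\bar a\indep_C B$ into transitivity, and because the transitivity axiom as stated grows the \emph{left} argument, I would first invoke the symmetry of $\indep$ in the simple theory $T$ to put the two hypotheses in the form the axiom expects before applying it. Some care should also be taken to check at each step that the bases in play are algebraically closed subsets of the relevant domain, so that the minimality clauses in the definition of $\WCb$ apply verbatim.
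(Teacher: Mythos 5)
Your argument is correct. The paper itself gives no proof of this proposition---it is quoted from Adler's thesis as Exercise 3.21---so there is nothing to compare against, but your verification is the standard one: minimality of $\WCb$ combined with monotonicity, base monotonicity, and (symmetry plus) transitivity of $\indep$, with the intersection property used only to guarantee that the weak canonical bases exist. Your reading of $\WCb(\bar a/BC)$ as the smallest algebraically closed subset of $\acl(BC)$ over which the type does not fork is the right way to reconcile the paper's definition (stated for types over a set $B$) with the theorem requiring the domain to be algebraically closed, and your flagged step, the inclusion $\WCb(\bar a/BC)\subseteq\WCb(\bar a/C)$ via chaining $\bar a\indep_{W'}\acl(C)$ with $\bar a\indep_{\acl(C)}\acl(BC)$, is exactly where the real content lies and is handled correctly.
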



Our goal is to show that weak canonical bases, when they exist, behave enough like canonical bases so as to
(weakly) eliminate the latter. For our work, we first recall two known facts.

\begin{thm}[\cite{adler-thesis}, Theorem 3.20]
$\indep$ has the intersection property iff every type over an algebraically closed subset has a weak canonical base.
\end{thm}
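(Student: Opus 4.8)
The plan is to prove the two implications separately, with the reverse implication (existence of weak canonical bases $\Rightarrow$ intersection property) being the short one and the forward implication carrying the real content. Throughout I work with $\acl^\eq$ for the algebraic closure on imaginaries, as in the statement of the intersection property and the definition of $\WCb$.

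For the reverse direction I would first reduce to the case where $B$ is algebraically closed and $A=a$ is a single finite tuple, using the finite character of forking in the left variable together with the standard fact that forking over $C$ and over $\acl^\eq(C)$ coincide. Given $a\indep_{C_1}B$ and $a\indep_{C_2}B$ with $C_1,C_2\subseteq B$ algebraically closed, set $W=\WCb(\tp(a/B))$. Since $C_1$ and $C_2$ each belong to the family of algebraically closed subsets of $B$ over which $\tp(a/B)$ does not fork, and $W$ is by definition the \emph{smallest} such set, we get $W\subseteq C_1$ and $W\subseteq C_2$, hence $W\subseteq C_1\cap C_2$. As $a\indep_W B$ holds by the defining property of the weak canonical base and $W\subseteq C_1\cap C_2\subseteq B$, base monotonicity gives $a\indep_{C_1\cap C_2}B$, which (after restoring general $A$ by finite character) is exactly the intersection property. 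The only point requiring care is that the defining minimality of $W$ is minimality as a \emph{minimum} contained in every member of the family, not merely minimality under inclusion; this is precisely what the hypothesis ``has a weak canonical base'' provides.

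For the forward direction, assume $\indep$ has the intersection property and fix $p=\tp(a/B)$ with $B$ algebraically closed. Let $\mathcal F$ be the family of algebraically closed $C\subseteq B$ over which $p$ does not fork. Local character supplies a member of $\mathcal F$ of bounded size, so $\mathcal F\neq\emptyset$, and the intersection property says exactly that $\mathcal F$ is closed under \emph{binary} intersections. If a weak canonical base exists it must equal $\bigcap\mathcal F$, so the entire problem reduces to showing $\bigcap\mathcal F\in\mathcal F$. Choosing a small $C_0\in\mathcal F$ and replacing $\mathcal F$ by $\{C_0\cap C:C\in\mathcal F\}$ (still in $\mathcal F$ by binary intersection, with the same total intersection), I may assume every member lies inside the bounded set $C_0$; by monotonicity each such $C$ satisfies $a\indep_C C_0$, and by transitivity with $a\indep_{C_0}B$ it suffices to prove $a\indep_{\bigcap\mathcal F}C_0$. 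Organizing $\bigcap\mathcal F$ as the limit of a decreasing chain built by successively intersecting in new members, the successor steps stay in $\mathcal F$ automatically by the intersection property.

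The hard part will be the limit stages, i.e. showing that non-forking is continuous along a decreasing chain of algebraically closed sets: if $a\indep_{C_i}C_0$ for all $i$ then $a\indep_{\bigcap_i C_i}C_0$. This does not follow from binary intersection alone — for a chain the binary intersections are trivial — and it need not hold for an abstract relation satisfying the remaining axioms, so it is here that simplicity must be combined with the intersection property. I would attack it through the canonical base, using that for $C\subseteq C_0$ one has $a\indep_C C_0$ iff $\Cb(a/C_0)\in\bdd(C)$, reducing the claim to $\Cb(a/C_0)\in\bdd(\bigcap_i C_i)$ given membership in each $\bdd(C_i)$. A suppose-not argument yields a formula $\varphi(x,b)\in p$ dividing over $\bigcap_i C_i$, witnessed by a $\bigcap_i C_i$-indiscernible sequence; the delicate step, which I expect to be the main obstacle, is to localize the base of this witness to a single $C_i$ (via a compactness/Ramsey extraction, leaning on the binary intersection property to control the interaction of the $C_i$), thereby contradicting $C_i\in\mathcal F$. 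I note that in the supersimple case this obstacle evaporates: local character makes every member of $\mathcal F$ finite, so $\bigcap\mathcal F$ is attained after finitely many binary intersections and no limit argument is needed. The technical heart of the general statement is exactly making this limit-stage localization go through.
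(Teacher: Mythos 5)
The paper does not actually prove this statement---it is imported verbatim from Adler's thesis as a black box---so I can only judge your proposal on its own terms. Your easy direction is correct: after reducing to $B=\acl^\eq(B)$ and finite $a$ (both reductions are legitimate, via transitivity over $\acl^\eq(B)$ and finite character), the minimality of $W=\WCb(\tp(a/B))$ places it inside $\acl^\eq(C_1)\cap\acl^\eq(C_2)$, and base monotonicity finishes. That half is fine.

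The forward direction, however, contains a genuine gap which you flag but do not close, and it is exactly where the content of the theorem lives. Binary (hence finite) closure of the family $\mathcal{F}=\{C\subseteq B: C=\acl^\eq(C),\ a\indep_C B\}$ under intersection does not yield $\bigcap\mathcal{F}\in\mathcal{F}$; your transfinite construction stands or falls on the limit-stage claim that non-forking passes to the intersection of a decreasing chain of algebraically closed bases, and you give no argument for it. The reformulation you propose---rewriting $a\indep_{C_i}C_0$ as $\Cb(a/C_0)\in\bdd(C_i)$ and asking whether a hyperimaginary bounded over every $C_i$ is bounded over $\bigcap_i C_i$---is not a simplification: it is a statement about pulling a hyperimaginary down through a decreasing chain of bounded closures, which is of the same nature and difficulty as the weak-elimination phenomena this whole paper is about, and the sketched ``localize the dividing witness to a single $C_i$ by compactness/Ramsey'' is a hope, not a proof. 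Note also that your supersimple escape hatch fails as stated: local character does give a \emph{finite} $C$ with $a\indep_C B$, but the members of $\mathcal{F}$ are required to be $\acl^\eq$-closed, and $\acl^\eq(C)$ is generally infinite even for finite $C$, so $\mathcal{F}$ can still contain infinite strictly decreasing chains and the limit problem does not evaporate. As it stands you have proved one implication and correctly diagnosed, but not supplied, the missing lemma for the other; you should either prove the chain lemma or consult Adler's argument, which may phrase the intersection property for arbitrary families precisely to avoid this issue.
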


\begin{prop}[\cite{Wagner-SimpleTheories}, Theorem 2.5.4]
    Let $\pi(x,a)$ be a partial type over $Aa$ that does not fork over $A$.
    If $(a_i : i < \omega)$ is a sequence in $\tp(a/A)$ indiscernible over $A$,
    then $\bigcup_i \pi(x,a_i)$ is consistent and does not fork over $A$.
\end{prop}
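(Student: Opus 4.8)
The plan is to pass to a completion, then treat the two conclusions separately: consistency by a direct compactness argument that needs no simplicity, and non-forking by a reduction (via forking $=$ dividing for formulas) to a statement about diagonal conjunctions that I would settle with Kim's Lemma and type amalgamation. First I would extend $\pi(x,a)$ to a complete type $p(x)=\tp(b/Aa)$ that does not fork over $A$, so that $b\indep_A a$ and $b\models\pi(x,a)$. Since each $a_i\equiv_A a$, the ``diagonal'' family $\bigcup_i p(x,a_i)$ (writing $p(x,a_i)$ for the copy of $p$ with $a$ replaced by $a_i$) is well defined and contains $\bigcup_i\pi(x,a_i)$, so it suffices to work with $p$ throughout.

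For consistency, by compactness it is enough that every finite subfamily be consistent. As $p$ is complete, any finite subfamily is implied by a single instance $\bigwedge_r\psi(x,a_{i_r})$ with $\psi(x,a)\in p$, and by $A$-indiscernibility this is consistent for some finite block of indices if and only if $\{\psi(x,a_i):i<\omega\}$ is consistent. Now every $\psi(x,a)\in p$ does not fork over $A$ (otherwise $p$ would fork), hence does not divide over $A$; and by the very definition of dividing along an indiscernible sequence, if $\{\psi(x,a_i):i<\omega\}$ were inconsistent then $\psi(x,a)$ would divide over $A$ (using the given sequence as witness). Hence the set is consistent, and $\bigcup_i\pi(x,a_i)$ is consistent.

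For the non-forking conclusion I would show that the diagonal partial type $\Gamma(x)=\bigcup_i p(x,a_i)$ does not fork over $A$. Since $\Gamma$ is generated by its diagonal instances, it forks over $A$ iff some finite conjunction $\chi(x,\bar a)=\bigwedge_r\theta_r(x,a_{i_r})$ with $\theta_r(x,a)\in p$ forks over $A$, and by $A$-indiscernibility I may take $\bar a$ to be an initial block $(a_0,\dots,a_{n-1})$. In a simple theory forking equals dividing for formulas, so it suffices to prove that no such $\chi(x,a_0,\dots,a_{n-1})$ divides over $A$. Here I would invoke Kim's Lemma: $\chi$ does not divide over $A$ exactly when $\{\chi(x,\bar a^{(k)}):k<\omega\}$ is consistent for a Morley sequence $(\bar a^{(k)})_k$ of $\tp(a_0\cdots a_{n-1}/A)$ over $A$, and I would obtain such a common realization by amalgamating the term-wise non-forking realizations of the $\theta_r$ along the Morley sequence, using the Independence Theorem (type amalgamation over $\bdd(A)$ in a simple theory).

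I expect this last step to be the main obstacle. Term-wise we have $c_0\indep_A a_i$ for each single $i$ from the consistency argument, but because non-forking extensions are \emph{not} unique in a simple unstable theory, these do not automatically assemble into independence of the realization from the whole block $(a_0,\dots,a_{n-1})$, let alone from the whole sequence. Controlling conjunctions across several sequence elements at once is therefore the crux, and it is precisely here that simplicity is used—through Kim's Lemma and amalgamation over a boundedly closed base—whereas the consistency half required neither.
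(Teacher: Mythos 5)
The paper never proves this proposition: it is imported verbatim from Wagner's book (Theorem 2.5.4 there) and used as a black box in the proof of Lemma~\ref{lem:intprop-wecb}, so your attempt has to stand on its own. The consistency half does: passing to a non-forking completion $p$, noting that each $\psi(x,a)\in p$ fails to divide over $A$, and observing that an inconsistency in $\bigcup_i p(x,a_i)$ would by indiscernibility yield a $k$-inconsistent family $\{\psi(x,a_i)\}$ witnessing dividing, is exactly the standard argument, and you are right that it needs no simplicity.

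The non-forking half has a genuine gap, and it sits precisely where you flag the ``main obstacle''; the tools you name do not close it. The reduction to showing that no $\chi(x,a_0,\dots,a_{n-1})=\bigwedge_{r<n}\theta(x,a_r)$ divides over $A$ is fine, and Kim's Lemma correctly reduces this to exhibiting \emph{one} Morley sequence $(\bar a^{(k)})_k$ in $\tp(a_0\cdots a_{n-1}/A)$ along which $\{\chi(x,\bar a^{(k)}):k<\omega\}$ is consistent. But the Independence Theorem cannot supply the common realization as described, for two reasons. First, amalgamating the term-wise realizations of the $\theta(x,a_r)$ \emph{within} a single block requires the bases $a_0,\dots,a_{n-1}$ to be independent over $A$, and an arbitrary $A$-indiscernible sequence is nothing like $A$-independent (all the $a_i$ may share a coordinate outside $\acl(A)$), so the hypothesis of the theorem simply fails. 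Second, amalgamating \emph{across} the blocks of the Morley sequence is circular: the inductive step needs, for each $k$, some $d_k\models\chi(x,\bar a^{(k)})$ with $d_k\indep_A\bar a^{(k)}$, and the existence of such a $d_k$ is exactly the statement ``$\chi(x,\bar a)$ does not fork over $A$'' that is to be proved; the consistency half only yields $b\models\bigcup_i p(x,a_i)$ with $b\indep_A a_i$ for each \emph{single} $i$. The standard proofs close this step with different machinery -- extend the sequence to a very long one, extract a realization of $\bigcup_i p(x,a_i)$ over which the sequence stays indiscernible, and then use local character together with symmetry and transitivity of $\indep$ -- and indeed this lemma is normally established \emph{before} the Independence Theorem is available. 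As written, your argument establishes only the consistency clause, whereas Lemma~\ref{lem:intprop-wecb} uses the full non-forking conclusion.
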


\begin{lemma}\label{lem:intprop-wecb}
If $\indep$ has the intersection property, then for each type $p$ over imaginary elements,
   the hyperimaginary $e = \Cb(p)$ is weakly eliminated.
\end{lemma}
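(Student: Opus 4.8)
The plan is to show that the weak canonical base of $p$ itself weakly eliminates $e$. Write $p$ as a type over a set $B \subseteq \MM^\eq$ of imaginaries, set $e = \Cb(p)$, and let $C = \WCb(p)$ be the smallest algebraically closed set of imaginaries contained in $B$ over which $p$ does not fork; this exists by the theorem just quoted, since $\indep$ has the intersection property. I claim $\bdd(C) = \bdd(e)$. One inclusion is immediate: $p$ does not fork over $C$, so the canonical base property gives $e \in \bdd(C)$. The entire content lies in the reverse inclusion $C \subseteq \bdd(e)$.

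To obtain $C \subseteq \bdd(e)$, I would show that $C$ is setwise invariant under every $\sigma \in \mathrm{Aut}(\MM/e)$. Since $C$ is a small set, setwise invariance forces each of its elements to have a small --- hence bounded --- orbit over $e$, which is exactly the statement $C \subseteq \bdd(e)$; combined with the first inclusion this yields $\bdd(C) = \bdd(e)$, so $C$ weakly eliminates $e$. Fix then $\sigma \in \mathrm{Aut}(\MM/e)$ and put $B' = \sigma(B)$, $p' = \sigma(p)$, and $C' = \sigma(C)$; note that $C' = \WCb(p')$ and, because $\sigma$ fixes $e$, also $\Cb(p') = e$. Let $\bar p$ denote the $e$-invariant global nonforking extension of $p$; this is available because the type over $\bdd(e)$ underlying $p$ is an amalgamation base, so its nonforking extension is unique and $\bdd(e)$-invariant. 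Then $\bar p \restriction B = p$, and since $\sigma(\bar p) = \bar p$ we also get $\bar p \restriction B' = p'$.

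Now I would feed this into the intersection property. Because $\bar p$ is a nonforking extension of $p$ while $p$ does not fork over $C$, transitivity gives that $\bar p$ does not fork over $C$; applying $\sigma$ shows likewise that $\bar p$ does not fork over $C'$. Choosing any $\bar a^* \models \bar p \restriction BB'$, we therefore have $\bar a^* \indep_C BB'$ and $\bar a^* \indep_{C'} BB'$ with $C, C' \subseteq BB'$. The intersection property then yields $\bar a^* \indep_{C \cap C'} BB'$, using that $C$ and $C'$ are algebraically closed, so $\acl^\eq(C) \cap \acl^\eq(C') = C \cap C'$. Restricting to $B$ and recalling $\tp(\bar a^*/B) = p$, we conclude that $p$ does not fork over the algebraically closed set $C \cap C' \subseteq C \subseteq B$. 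Minimality of $C = \WCb(p)$ then forces $C \subseteq C \cap C'$, i.e. $C \subseteq \sigma(C)$; running the same argument with $\sigma^{-1}$, which also fixes $e$, gives the reverse inclusion, so $\sigma(C) = C$, as required.

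I expect the main obstacle to be the bookkeeping around the canonical base rather than the forking calculus: one must produce a single global nonforking extension $\bar p$ that is simultaneously $e$-invariant and restricts to both $p$ and $p' = \sigma(p)$, and one must keep the distinction between $e$, $\bdd(e)$, $C$, and $\acl^\eq$ scrupulously straight, so that the transitivity step ``$\bar p$ does not fork over $C$'' is genuinely derived from ``$p$ does not fork over $C$'' and not, circularly, from information about $e$. Once $\bar p$ is in hand, the decisive and more novel move is the pairing of the intersection property with the minimality of $\WCb(p)$ to squeeze $C$ between $C \cap \sigma(C)$ and $\sigma(C)$.
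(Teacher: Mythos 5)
Your skeleton coincides with the paper's: take $C=\WCb(p)$, which exists because $\indep$ has the intersection property, observe that $e=\Cb(p)\in\bdd(C)$ since $p$ does not fork over $C$, and reduce everything to the inclusion $C\subseteq\bdd(e)$. The gap is in your execution of that inclusion. It rests entirely on the sentence ``the type over $\bdd(e)$ underlying $p$ is an amalgamation base, so its nonforking extension is unique and $\bdd(e)$-invariant.'' That inference is wrong: being an amalgamation base gives the independence theorem (two nonforking extensions to sets that are \emph{independent} over $e$ can be amalgamated), not uniqueness or invariance of a global nonforking extension. Uniqueness of the nonforking extension of $p\restriction\bdd(e)$ is stationarity of Lascar strong types, which is exactly the stable-theoretic property one does not have in a merely simple theory --- and its failure is the whole reason this lemma requires an argument at all. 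Without the invariant $\bar p$, there is no single realization $\bar a^*$ with both $\bar a^*\indep_C BB'$ and $\bar a^*\indep_{C'}BB'$: you can realize a nonforking extension of $p$ to $BB'$, but its restriction to $B'$ need not be $p'=\sigma(p)$ and so need not be nonforking over $C'=\sigma(C)$. The independence theorem would rescue this only if you also had $B\indep_e B'$ and agreement of $p$ and $p'$ over $\bdd(e)$, neither of which holds for an arbitrary $\sigma\in\mathrm{Aut}(\MM/e)$.

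The paper's proof is the repair: it tests membership in $\bdd(e)$ with $e$-indiscernible sequences rather than with single automorphic images. If $C\not\subseteq\bdd(e)$, extract an $e$-indiscernible sequence $(C_i)_{i<\omega}$ of pairwise distinct conjugates of $C$ over $e$; the quoted fact from Wagner (Theorem 2.5.4) --- a partial type not forking over $e$ extends consistently and nonforkingly along an $e$-indiscernible sequence of conjugates of its parameters --- then supplies the common nonforking extension $q$ of all the restrictions $p\restriction C_i$ that your $\bar a^*$ was supposed to realize, and $\WCb(q)=\WCb(p\restriction C_i)=C_i$ for every $i$ forces $C_i=C_j$, a contradiction. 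So your closing idea of playing the minimality of $\WCb$ against conjugates of $C$ is the right one, but it must be run along an indiscernible sequence of conjugates, where simultaneous nonforking realizability is a theorem, not against an arbitrary conjugate, where it is not.
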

\begin{proof}
Let $p(x)$ be a type over a set $B$ of imaginaries.
Let $D = \WCb(p)$, so $D \subset B$ is a set of imaginary elements and $p$ does not fork over $D$.
On the other hand, there is a canonical base $e = \Cb(p)$ which exists as a hyperimaginary element.
By definition, since $p$ does not fork over $D$, we have $e \in \bdd(D)$.
We wish to reverse this and show $D \subset \bdd(e)$.

Aiming for a contradiction, suppose $D \not\subset \bdd(e)$.
Then there is an $e$-indiscernible sequence $(D_i : i < \omega)$
of pairwise distinct realizations of $\tp(D/e)$ with $D_0 = D$.
Let $p'(x,D)$ be the restriction of $p$ to $D$.
Since $p$ does not fork over $e$, neither does $p'(x,D)$.
For $i < \omega$, $p'(x,D_i)$ is a conjugate over $e$ of $p'(x,D)$.
Since $p'(x,D)$ does not fork over $e$, the union $q(x) = \cup p'(x,D_i)$
is consistent and for each $i$, $q$ is a non-forking extension of $p'(x,D_i)$.
In particular this means $C = \WCb(q)$, the weak canonical base of $q$,
is contained in $D_i$ for each $i$.
In fact $C = D_i$ for each $i$ since $\WCb(p'(x,D_i)) = D_i$.
And so $D_i = C = D_j$ for every pair $i,j < \omega$.
But this contradicts the sequence consisting of distinct realizations of $\tp(D/e)$.
Hence $D \subset \bdd(e)$, and $e$ is weakly eliminated.
\end{proof}

Finally, we come to the proof of Theorem~\ref{thm:int-wehi}, stated at the beginning of the section.
\begin{proof}[Proof of Theorem~\ref{thm:int-wehi}]
Assume the hypothesis.
Then forking has the intersection property, and by Lemma~\ref{lem:intprop-wecb},
canonical bases of types over imaginary elements are weakly eliminated.
Then by Lemmas~\ref{lem:wecb-wehcb} and~\ref{lem:wecb-wehi}, all hyperimaginaries are weakly eliminated.
\end{proof}

A slightly more satisfying version of Theorem \ref{thm:int-wehi}, would following immediately from the statement of the following conjecture, with which we conclude this section.

\begin{conj}
In any simple theory, $\thind$ is canonical. 
\end{conj}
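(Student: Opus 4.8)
The plan is to reduce the conjecture to an existence statement for \th-canonical bases, in exact analogy with Adler's Theorem~3.20, and then to establish that existence using only tools available in an arbitrary simple theory. By the \th-forking analogue of that equivalence, $\thind$ is canonical if and only if every type over an algebraically closed set $B$ admits a smallest algebraically closed $C\subseteq B$ over which it does not \th-fork. Since $T$ is simple, $\thind$ is already a full independence relation -- in particular it satisfies monotonicity, base monotonicity, and finite character -- so the existence of such a least base is equivalent to the closure-under-intersection demanded by the intersection property. Thus I would first isolate the genuine content as a single pairwise step: given algebraically closed $C_1,C_2\subseteq B$ with $A\thind_{C_1}B$ and $A\thind_{C_2}B$, produce $A\thind_{\acl^\eq(C_1)\cap\acl^\eq(C_2)}B$.

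For the pairwise step I would build a bridge between \th-forking and the hyperimaginary canonical-base machinery, which is available in every simple theory. Put $e=\Cb(A/B)$, a hyperimaginary with $e\in\bdd(B)$ and $A\indep_e B$; since forking-independence implies \th-independence in a simple theory (\cite{adler-thesis}), also $A\thind_e B$. The lemma to aim for is an imaginary witness of the form ``there is a core $d_e\subseteq\dcl(e)\cap\M^\eq$ such that $A\thind_{d_e}e$, and $A\thind_C B$ holds (for algebraically closed $C\subseteq B$) exactly when $d_e\subseteq\acl^\eq(C)$.'' Granting this, both $C_1$ and $C_2$ contain $d_e$, hence $d_e\subseteq\acl^\eq(C_1)\cap\acl^\eq(C_2)$, and transitivity applied to $A\thind_{d_e}e$ together with $A\thind_e B$ yields $A\thind_{d_e}eB$ and so $A\thind_{d_e}B$. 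In the presence of WEHI this is precisely the argument of Theorem~\ref{thm:dwip-intersection}, where DWIP manufactures exactly such an imaginary $d_e$; the entire difficulty of the conjecture is concentrated in producing $d_e$ without WEHI.

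The main obstacle -- and the reason the statement is only a conjecture -- is exactly this last point. Absent WEHI the canonical base $e$ is a genuine hyperimaginary with no imaginary shadow, whereas the intersection in the intersection property is computed inside $\acl^\eq$; so the common core $d_e$ must be \emph{imaginary} for the argument to close. Forcing $d_e$ to be imaginary is morally a WEHI-type assertion, but -- and this is what makes the conjecture potentially more tractable than WEHI itself -- one does not need the full equality $\indep=\thind$: it would suffice to show that the coarser relation $\thind$ only ever ``sees'' the imaginary part of $e$. I would first test the plan where imaginary shadows are free, namely the supersimple and, more generally, finite $SU$-rank cases, in which EHI holds by \cite{BPW}, and then attempt an induction on $SU$-rank, peeling off one rank at a time while tracking how \th-forking interacts with the rank decomposition.

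Should the canonical-base route stall, a complementary line of attack is to invoke Theorem~\ref{thm:intersection-is-thorn-forking} in reverse: it is enough to exhibit \emph{any} non-trivial independence relation with the intersection property, since such a relation must coincide with $\thind$ and thereby witness its canonicity. Here one would form the ``canonical hull'' of $\indep$ -- the relation obtained by closing the family of admissible bases under intersection -- and verify that it remains an independence relation. The delicate axiom is local character, which intersection can easily destroy; showing that local character survives this closure in an arbitrary simple theory is, I expect, of the same order of difficulty as the existence of the core $d_e$ above, and I regard securing one of these two properties as the crux of any eventual proof.
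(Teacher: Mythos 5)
This statement is presented in the paper as an open conjecture, not a theorem: the authors explicitly say that whether $\thind$ is canonical in every simple theory ``is still beyond our knowledge,'' and they offer no proof. Your proposal, to its credit, is honest about this --- but it is a research plan, not a proof, and as submitted it does not establish the statement. The entire argument hinges on the ``lemma to aim for'': the existence of an imaginary core $d_e\subseteq\dcl(e)\cap\M^\eq$ with $A\thind_{d_e}e$ and with $d_e$ contained in every admissible base. You never prove this lemma; you only observe (correctly) that DWIP manufactures exactly such a $d_e$, and that DWIP is equivalent to WEHI by Corollary~\ref{cor:DWIP-is-WEHI}. So the step you need is, by the paper's own Theorem~\ref{thm:int-wehi} direction, essentially equivalent in strength to WEHI itself in the presence of $\indep=\thind$; you have reduced the conjecture to something at least as hard as the hypothesis the conjecture is meant to avoid. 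The same applies to your fallback route: you identify local character of the ``canonical hull'' as the delicate axiom and then concede it is of the same order of difficulty. A reduction whose remaining step is acknowledged to be the full difficulty of the original problem is not a proof.

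One smaller point worth flagging: your opening reduction quietly assumes the \th-forking analogue of Adler's Theorem~3.20 (canonicity iff existence of weak \th-canonical bases) and that $\thind$ is a full independence relation in every simple theory with all the properties needed for the pairwise-intersection step to iterate to arbitrary families of bases (one needs some care with infinite descending intersections, where finite character and base monotonicity alone do not obviously suffice). These are plausible but should be checked rather than asserted. In sum: the strategy is a reasonable restatement of why the conjecture is hard and of where the paper's own machinery (Theorem~\ref{thm:dwip-intersection}) stops, but no part of the genuinely open content has been supplied.
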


%
%
%

\section{Stable-forking}

\subsection{Symmetry of Stable Forking}
This short section presents a proof that stable forking is symmetric in all simple theories.
It builds on a foundation laid in \cite{Shami11}, where stable forking symmetry is proved for theories where Lascar strong type is equivalent to strong types.
We will recall the bare essentials of that paper, but only present a proof of the new result which we require.
This proof and all the other material in this subsection are due to Shami.

Here is a rough outline of the original proof.
First, one establishes that for stable formulas, generic satisfiability is unique for parameters that have the same Lascar strong type.
\begin{fact}\label{fact:lstp-gives-same-stable-set}\cite[Claim 6.5]{Shami11}
    Let $T$ be simple, and let $\phi(x,y) \in L$ be stable.
    Assume $a \indep_A b$ and $a' \indep_{A} b$
    and that $\lstp(a/A) = \lstp(a'/A)$.
    Then $\models\phi(a,b)$ if and only if $\models\phi(a',b)$.
\end{fact}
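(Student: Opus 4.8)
The plan is to reduce the statement to a single ``generic'' configuration by an amalgamation argument and then to read off the conclusion from the definability of nonforking $\phi$-types. Throughout I use that $T$ is simple, so that the Independence Theorem holds for Lascar strong types over an arbitrary base, and that nonforking, extension, symmetry and finite character are all available.

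First I would set up the amalgamation. Fix a Morley sequence $(b_i)_{i<\omega}$ over $A$ in $\lstp(b/A)$ with $b_0=b$; in particular $b_0\indep_A (b_i)_{i\ge 1}$ and every $b_i$ realizes $\lstp(b/A)$. The element $a$ already sits correctly over the $b_0$-side, since $a\indep_A b_0$ and $\phi(a,b_0)=\phi(a,b)$. For the tail, choose $\sigma\in\mathrm{Aut}(\MM/A)$ with $\sigma(b)=b_1$ and put $a_1=\sigma(a')$; then $a_1\indep_A b_1$, $\lstp(a_1/A)=\lstp(a'/A)=\lstp(a/A)$ and $\phi(a_1,b_1)=\phi(a',b)$. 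Using Extension I replace $a_1$ by some $a''\equiv_{Ab_1}a_1$ with $a''\indep_A (b_i)_{i\ge 1}$, preserving $\phi(a'',b_1)=\phi(a',b)$ and $\lstp(a''/A)=\lstp(a/A)$. Now I apply the Independence Theorem over $A$ to the independent sides $b_0$ and $(b_i)_{i\ge1}$, placing $a$ on the first side and $a''$ on the second (these agree in Lascar strong type over $A$): this yields $a^{*}\indep_A (b_i)_{i<\omega}$ with $\lstp(a^{*}/Ab_0)=\lstp(a/Ab_0)$ and $\lstp(a^{*}/A(b_i)_{i\ge1})=\lstp(a''/A(b_i)_{i\ge1})$. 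Consequently $\phi(a^{*},b_0)=\phi(a,b)$ and $\phi(a^{*},b_1)=\phi(a',b)$, so the whole statement collapses to showing $\phi(a^{*},b_0)=\phi(a^{*},b_1)$.

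The second step is where stability of $\phi$ enters. I claim that if $\phi$ is stable, $(b_i)$ is a Morley sequence over $A$ lying in a single Lascar strong type, and $a^{*}\indep_A (b_i)_{i<\omega}$, then $i\mapsto\phi(a^{*},b_i)$ is constant; this gives $\phi(a^{*},b_0)=\phi(a^{*},b_1)$ and finishes the proof. To see the claim, note that since $\phi$ is stable the $\phi$-type of $a^{*}$ over $A(b_i)_{i}$ is definable, and because $a^{*}\indep_A (b_i)_{i}$ it is the nonforking extension of $\tp_\phi(a^{*}/A)$, whose $\phi$-definition $\psi$ is invariant under $\mathrm{Aut}(\MM/\bdd(A))$. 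As all $b_i$ realize the same Lascar strong type over $A$, they share a type over $\bdd(A)$, so $\psi$ takes the same value at each $b_i$; equivalently $\phi(a^{*},b_i)$ is constant.

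The genuine obstacle is this second step: the reduction is purely formal and valid in any simple theory, whereas constancy along the Morley sequence is exactly where stability cannot be dispensed with. That it must fail without stability is visible in the random graph, where for the (unstable) edge relation one easily finds $a\indep_A b$ and $a'\indep_A b$ with $\lstp(a/A)=\lstp(a'/A)$ but $\phi(a,b)\neq\phi(a',b)$. Accordingly, the delicate point in a careful write-up is justifying, in the simple (rather than stable) setting, that the local $\phi$-definition of a nonforking extension is invariant over $\bdd(A)$ -- that is, that the ``generic satisfaction'' of $\phi$ depends only on the Lascar strong type of the parameter. This is precisely the property advertised just before the statement, and it is the technical heart of the argument.
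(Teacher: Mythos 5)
First, a point of comparison: the paper does not prove this statement at all --- it is imported verbatim as \cite[Claim 6.5]{Shami11} (the authors explicitly say they only reprove the one new lemma they need), so there is no in-house argument to measure yours against. Judged on its own, your amalgamation step is sound in outline, modulo two small repairs: $\sigma$ must be taken to be a Lascar strong automorphism over $A$ --- possible because $\lstp(b_1/A)=\lstp(b/A)$ --- since an arbitrary element of $\mathrm{Aut}(\MM/A)$ may permute Lascar strong types and need not give $\lstp(a_1/A)=\lstp(a'/A)$; and the Extension step must produce $a''$ with $\lstp(a''/Ab_1)=\lstp(a_1/Ab_1)$, not merely $a''\equiv_{Ab_1}a_1$, or you lose control of $\lstp(a''/A)$. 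With those fixes the Independence Theorem legitimately reduces the problem to showing $\phi(a^{*},b_0)\leftrightarrow\phi(a^{*},b_1)$ for $a^{*}\indep_A(b_i)_i$ with $(b_i)_i$ a Morley sequence over $A$ lying in one Lascar strong type.

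The genuine gap is your second step, and it is not a deferred technicality but a circularity. The assertion that $\phi(a^{*},b_i)$ is constant along the sequence is exactly the Fact being proved, applied to the stable formula $\phi^{\mathrm{opp}}(y;x)=\phi(x;y)$ with $b_0,b_1$ in the roles of $a,a'$ and $a^{*}$ in the role of $b$: indeed $b_0\indep_A a^{*}$, $b_1\indep_A a^{*}$, and $\lstp(b_0/A)=\lstp(b_1/A)$. Your justification --- that the $\phi$-definition of the nonforking extension of $\tp_\phi(a^{*}/A)$ is invariant under $\mathrm{Aut}(\MM/\bdd(A))$ --- is precisely the assertion whose proof is at issue. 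In a stable theory that invariance comes from stationarity over $\acl^\eq(A)$ via the finite equivalence relation theorem; in a merely simple theory a nonforking extension of a $\phi$-type is not stationary, and showing that its $\phi$-definition nevertheless depends only on the Lascar strong type of $a^{*}$ over $A$ is the entire content of Shami's claim (which is established by local $\phi$-rank arguments, not by amalgamation). You acknowledge this in your closing paragraph, so what you have is a clean reduction of the statement to an equivalent statement, together with a correct observation (via the random graph) that stability is indispensable --- but not a proof.
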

One uses this fact to identify an alternate definition of forking (more akin to forking in a stable theory) with the usual one for simple theories.

\begin{defn}
Let $\phi(x,y)$ be a formula, and let $A\subseteq B\subset\MM^\eq$. Then $p\in S_\phi(B)$ \emph{does not fork in the sense of LS} if, for some model $M$ containing $B$, some $p'\in S_\phi(M)$ extending $p$ is definable over $\acl(A)$.
\end{defn}

\begin{fact}\cite[Lemma 6.6]{Shami11}
    Assume $T$ is a simple theory with $\lstp = \stp$, and
    let $\phi(x,y) \in L$ be a stable formula. Let $a\in\MM^\eq$ and $A\subseteq B\subset\MM$. Then 
    $\tp_\phi(a/B)$ does not fork over $A$ in the sense of LS if and only if
    $\tp_\phi(a/B)$ does not fork over $A$.
\end{fact}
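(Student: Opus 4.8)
The plan is to prove the two implications separately, observing that only the direction ``ordinary non-forking $\implies$ non-forking in the sense of LS'' requires the hypothesis $\lstp=\stp$. The essential structural input throughout is that, $\phi$ being stable, every complete $\phi$-type over a model is definable and carries a canonical base $\Cb_\phi(\cdot)\in\MM^\eq$ which is a genuine \emph{imaginary} (not merely a hyperimaginary); I will use this together with Fact~\ref{fact:lstp-gives-same-stable-set} and the basic forking machinery of simple theories.

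For the soft direction, suppose $\tp_\phi(a/B)$ does not fork over $A$ in the sense of LS, witnessed by some $p'\in S_\phi(M)$ extending it and definable over $\acl(A)$. Then the canonical base $\Cb_\phi(p')$ lies in $\acl^\eq(A)$, so by the standard local theory of the stable formula $\phi$ the type $p'$ does not fork over $A$; restricting, neither does $\tp_\phi(a/B)$. No appeal to $\lstp=\stp$ is needed here, since definability over a set contained in $\acl(A)$ already blocks dividing over $A$.

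For the main direction, assume $\tp_\phi(a/B)$ does not fork over $A$. First I would pass to a non-forking extension over a model: pick $M\supseteq B$ and $a'\equiv_B a$ with $a'\indep_B M$, so that $p':=\tp_\phi(a'/M)$ extends $\tp_\phi(a/B)$ and, by transitivity of forking, does not fork over $A$; relabelling, I may assume $a\indep_A M$. Since $\phi$ is stable, $p'$ is definable, say by $\psi(y)=d_{p'}\phi(y)$ with canonical base $e=\Cb_\phi(p')\in\MM^\eq$. The crux is to show that $e$ is fixed by every $\sigma\in\mathrm{Autf}(\MM/A)$. This is precisely where Fact~\ref{fact:lstp-gives-same-stable-set} enters: since $\lstp(\sigma a/A)=\lstp(a/A)$, comparing the values of $\phi(a,\cdot)$ and $\phi(\sigma a,\cdot)$ on a sufficiently generic $b$ (with $b\indep_A a$ and $b\indep_A\sigma a$) the fact yields $\models\phi(a,b)\iff\models\phi(\sigma a,b)$, so the two $\phi$-definitions agree on all generic $b$ and therefore determine the same canonical base, i.e.\ $\sigma(e)=e$. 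Hence $e\in\bdd(A)$. Now I invoke the hypothesis: when $\lstp=\stp$ one has $\bdd(A)\cap\MM^\eq=\acl^\eq(A)$, so the imaginary $e$ in fact lies in $\acl^\eq(A)$. Consequently $p'$ is definable over $\acl(A)$ and witnesses that $\tp_\phi(a/B)$ does not fork over $A$ in the sense of LS.

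I expect the genuine obstacle to be the invariance step. One must arrange the genericity of $b$ so that Fact~\ref{fact:lstp-gives-same-stable-set} applies simultaneously to $a$ and to $\sigma a$, and then argue that agreement of two stable $\phi$-definitions on all sufficiently independent $b$ forces equality of the definitions, hence of their canonical bases; this is exactly the point at which the local stability theory of the single formula $\phi$ must be reconciled with forking in the ambient simple theory. The subsequent passage from $\bdd(A)$ to $\acl^\eq(A)$ is then a clean application of $\lstp=\stp$, but it is logically indispensable: without it $e$ could a priori be a bounded-but-not-algebraic imaginary, and the LS witness would fail to be $\acl(A)$-definable.
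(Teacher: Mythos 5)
The paper does not prove this statement: it is imported verbatim as \cite[Lemma 6.6]{Shami11} (the authors say explicitly that they ``only present a proof of the new result which we require,'' namely Lemma~\ref{lemma:stp-gives-same-stable-set}), so there is no in-paper proof to compare yours against. Judged on its own terms, your outline follows the standard route of localizing the $\phi$-canonical base, and the soft direction is essentially right: a $\phi$-type over a model whose definition lies in $\acl^\eq(A)$ does not divide over $A$ (the definition has finitely many $A$-conjugates, so Ramsey gives an infinite subsequence of any $A$-indiscernible sequence lying in a single consistent conjugate global $\phi$-type), and forking equals dividing for partial types in simple theories.

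There are two problems in the hard direction. First, a repairable one: from ``$\tp_\phi(a/B)$ does not fork over $A$'' together with $a'\indep_B M$ you cannot conclude by transitivity that $\tp_\phi(a'/M)$ does not fork over $A$, because transitivity concerns the full types and you only control the $\phi$-part over $B$; the correct move is to apply the extension property to the non-forking \emph{partial} type $\tp_\phi(a/B)$ to obtain a realization $a''$ of it with $a''\indep_A M$. Second, and more seriously, the step you yourself flag as ``the genuine obstacle'' is a genuine gap, and it is exactly where the content of the lemma lives. Arranging that $d_e\phi$ and $d_{\sigma(e)}\phi$ agree on every $b$ with $b\indep_A a\,\sigma(a)\,M\,\sigma(M)$ shows only that their symmetric difference $\chi(y)$ has no realization independent from its parameters over $A$, i.e.\ that $\chi$ forks over $A$; it does not show $\chi$ is inconsistent, so it does not yield $\sigma(e)=e$. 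Compounding this, the place where you invoke $\lstp=\stp$ --- passing from $e\in\bdd(A)$ to $e\in\acl^\eq(A)$ --- is vacuous for a single imaginary: an imaginary with boundedly many $A$-conjugates has finitely many by compactness, in any theory. So as written your argument would establish the Fact with no hypothesis on Lascar strong types at all, which is a strong signal that the hypothesis must be consumed inside the step you deferred; in the literature this is handled by computing the $\phi$-definition as a ``majority vote'' over a Morley sequence and using the independence theorem to compare the definitions attached to two such sequences as actual formulas, not merely on generic points.
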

Provided $\lstp = \stp$, it is a short step from this to the symmetry of ``stable non-forking independence,'' from which the symmetry of stable forking is fairly obvious.
Throughout Shami's argument, the assumption that $\lstp = \stp$ appears only so that fact \ref{fact:lstp-gives-same-stable-set} may be used.
Thus, to extend his argument to arbitrary simple theories, it suffices to prove an analog of fact \ref{fact:lstp-gives-same-stable-set} for strong types in place of Lascar strong types.
Strangely, the proof of this extension actually uses its precursor.

\begin{lemma}\label{lemma:cotypedef-eq-rel}
    Let $E(x,y)$ be a bounded, co-type definable equivalence relation.
    Then $E$ is a definable, finite equivalence relation.
\end{lemma}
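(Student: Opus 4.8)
The plan is to prove the two assertions in the opposite order from how they are stated: first show that boundedness forces $E$ to have only \emph{finitely} many classes, and then use co-type-definability to upgrade each class from a union of definable sets to a single definable set, at which point $E$ itself is patently definable. Throughout, write $\neg E(x,y)\equiv\pi(x,y)$ for a partial type $\pi$ witnessing that $E$ is co-type-definable. The first basic observation is that every $E$-class is a union of definable sets: since $x\in[a]_E$ iff $\neg E(a,x)$ fails iff some $\theta\in\pi$ has $\neg\theta(a,x)$, we have $[a]_E=\bigcup_{\theta\in\pi}\neg\theta(a,\MM)$. Equivalently, the complement of each class is type-definable.

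\emph{Step 1 (finiteness).} Suppose toward a contradiction that $E$ has infinitely many classes. Then for every cardinal $\lambda$ the partial type $\{\theta(x_i,x_j):\theta\in\pi,\ i<j<\lambda\}$ in the variables $(x_i)_{i<\lambda}$ is consistent: any finite fragment mentions only finitely many variables and is satisfied by taking representatives of that many pairwise distinct classes, which exist because there are infinitely many. Realizing such a type in the monster $\MM$ for $\lambda$ larger than any proposed bound on the number of classes produces $\lambda$ pairwise $E$-inequivalent elements, contradicting boundedness of $E$. Hence $E$ has only finitely many classes, say $[a_1],\dots,[a_n]$.

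\emph{Step 2 (each class is definable).} Fix a class $[a_k]$. Its complement is $\bigcup_{l\neq k}[a_l]$, a \emph{finite} union of sets each of which is a union of definable sets, hence itself a union of definable sets; therefore $[a_k]$ is type-definable. But $[a_k]$ is also a union of definable sets by the opening observation. A set that is simultaneously type-definable and a union of definable sets is definable: if $[a_k]=\bigcap_i\phi_i=\bigcup_j\psi_j$, then $\{\phi_i\}\cup\{\neg\psi_j\}$ is inconsistent, so by compactness some finite conjunction $\bigwedge_{i\in I_0}\phi_i$ already implies $\bigvee_{j\in J_0}\psi_j$, which forces $[a_k]=\bigwedge_{i\in I_0}\phi_i$, a formula $\chi_k(x)$. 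Assembling the pieces, $E(x,y)\leftrightarrow\bigvee_{k=1}^{n}\bigl(\chi_k(x)\wedge\chi_k(y)\bigr)$ is a formula, so $E$ is a definable equivalence relation with the $n$ classes $\chi_1,\dots,\chi_n$. (If $\pi$ is over $\emptyset$, the relation $E$ is automorphism-invariant and now definable, hence definable over its base; the parameters $a_k$ used to name the individual classes are harmless.)

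The main obstacle is \emph{Step 1}, the passage from ``bounded'' to ``finite'': one must set up the stretching type carefully so that a single consistent type yields arbitrarily many pairwise inequivalent elements and thereby genuinely violates the bound on the number of classes. The other place where the hypothesis does real work is the clopen extraction in \emph{Step 2} — it is exactly co-type-definability (which makes the complement of a single class, being a finite union of classes, a union of definable sets) that allows each class to be seen as both closed and open, and only finiteness of the class count makes that finite union available.
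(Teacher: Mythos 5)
Your proof is correct, and while the finiteness step is essentially the paper's (a compactness/stretching argument showing that infinitely many classes would violate boundedness), your definability step takes a genuinely different route. The paper applies compactness to the inconsistency of $\bigwedge_{i\leq n} r(x,a_i)$ (where $r$ defines $\neg E$ and $a_1,\dots,a_n$ are class representatives) to extract a \emph{single} formula $\psi\in r$ such that every element satisfies $\neg\psi(x,a_j)$ for some $j$; it then writes down the explicit definition $\theta(x,y)\equiv\bigwedge_{i\leq n}\left(\neg\psi(x,a_i)\leftrightarrow\neg\psi(y,a_i)\right)$ and verifies via transitivity of $E$ that $\theta$ defines $E$. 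You instead argue topologically: each class is a union of definable sets (since its complement is cut out by the partial type $\pi(a,y)$), and once there are only finitely many classes, the complement of any one class is a finite union of such sets, so each class is simultaneously type-definable and a union of definable sets, hence definable by compactness; $E$ is then the disjunction of the products of the classes. Your version localizes the compactness argument to one class at a time and avoids producing a uniform $\psi$, at the cost of using a different defining formula $\chi_k$ for each class; the paper's version yields a single, more explicit formula built uniformly from one $\psi$ and the representatives. Both arguments are complete; the only caveat for yours is the implicit appeal to saturation when passing from ``the set of formulas $\{\phi_i\}_i\cup\{\neg\psi_j\}_j$ is unrealized in $\MM$'' to ``it is inconsistent,'' which is harmless since all parameters involved form a small set.
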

\begin{proof}
    Suppose $E$ is as in the hypothesis.
    Let $r(x,y)$ be a partial type defining $\neg E$.
    If $E$ had an infinite number of equivalence classes, then
    we could build an indiscernible sequence where each 2-type realizes $r$.
    The sequence would then imply the number of $E$ classes is unbounded, contradicting the hypothesis.
    Thus $E$ has only a finite number of classes.
    Let $a_1,\ldots,a_n$ for some $n < \omega$ consist of a single representative from each class.
    Then the type $\bigwedge_{i\leq n} r(x,a_i)$ is inconsistent, so there is a formula $\psi(x,y) \in r$ such that $\bigwedge_{i \leq n} \psi(x,a_i)$ is inconsistent by compactness.
    Observe that $\neg\exists x(\bigwedge_{i \leq n} \psi(x,a_i))$
    is equivalent to $\forall x(\bigvee_{i \leq n} \neg\psi(x,a_i))$,
    so for every $x$ there is some $j$ such that $\neg\psi(x,a_j)$.
    Since $\psi \in r$, $\neg \psi(x,y) \models E(x,y)$.
    Let
    \[
    \theta(x,y) \equiv \bigwedge_{i\leq n} \left(\neg\psi(x,a_i) \bic \neg\psi(y,a_i)\right).
    \]
    We will show $\theta$ defines $E$.
    First suppose $\theta(b,c)$.
    For some $j$ we have $\neg\psi(b,a_j)$, and $\theta$ entails $\neg\psi(c,a_j)$.
    Thus $E(b,a_j)$ and $E(c,a_j)$ so by transitivity $E(b,c)$.
    
    Conversely, suppose $E(b,c)$.
    If $\neg\theta(b,c)$ then there is some index $i$ such that (without loss of generality) $\neg\psi(b,a_i) \land \psi(c,a_i)$ holds.
    There is another index $j$ such that $\neg\psi(c,a_j)$.
    Then we have $E(b,a_i)$, $E(b,c)$ and $E(c,a_j)$.
    By transitivity $E(a_i,a_j)$.
    But we chose $a_i$ and $a_j$ to be in different classes.
    The contradiction proves the claim and the lemma.
\end{proof}

\begin{lemma}\label{lemma:stp-gives-same-stable-set}
    Let $T$ be simple.
    Let $\phi(x,y) \in L$ be stable.
    Assume $a \indep_{A} b$ and $a' \indep_{A} b$
    and that $\stp(a/A) = \stp(a'/A)$.
    Then $\models\phi(a,b)$ if and only if $\models\phi(a',b)$.
\end{lemma}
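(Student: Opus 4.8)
The plan is to produce, working over $A$, a finite $A$-definable equivalence relation $E$ on the realizations of $p := \stp(a/A)$ with the property that $E(u,v)$ forces $u$ and $v$ to agree on $\phi(\cdot,c)$ for every $c$ independent from both over $A$. Once such an $E$ is in hand the lemma is immediate: since $E$ is finite and $A$-definable, the hypothesis $\stp(a/A)=\stp(a'/A)$ gives $E(a,a')$, and then $a\indep_A b$ together with $a'\indep_A b$ yields $\models\phi(a,b)\leftrightarrow\phi(a',b)$. So the whole content is the construction of $E$, and the surprising point is that $E$ is built out of its Lascar-type analogue, i.e. out of Fact~\ref{fact:lstp-gives-same-stable-set}. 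Throughout I name $A$, so that $\stp(\cdot/A)$ becomes $\tp(\cdot/\acl^{\eq}(A))$ and ``finite $A$-definable equivalence relation'' means exactly ``ingredient of the strong type over $A$''.

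First I would define the relation
\[
E(x,y)\ :\Longleftrightarrow\ \text{for every } c \text{ with } x\indep_A c \text{ and } y\indep_A c,\ \models\phi(x,c)\leftrightarrow\phi(y,c).
\]
The key soft input is Fact~\ref{fact:lstp-gives-same-stable-set}: it says precisely that $\lstp(x/A)=\lstp(y/A)$ implies $E(x,y)$, so $E$ is coarser than equality of Lascar strong types and therefore has only boundedly many classes. The next task is to check that $E$ really is an equivalence relation --- reflexivity and symmetry are free, and transitivity I would obtain by an extension/amalgamation argument that moves a common witness $c$ off of the middle realization while preserving its $\phi$-relation to the two outer ones. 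Finally I would verify that $E$ is co-type-definable, i.e. that $\neg E$ --- ``some $c$ independent from both $x$ and $y$ witnesses disagreement'' --- is a partial type in $(x,y)$; this is the one place where the stability of $\phi$ is used essentially, since definability of $\phi$-types lets one replace the existential quantifier over an independent (forking-free) parameter by a generic/Morley witness and thereby express the disagreement type-definably.

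With $E$ shown to be a bounded, co-type-definable equivalence relation, Lemma~\ref{lemma:cotypedef-eq-rel} upgrades it to a finite, $A$-definable equivalence relation, which is exactly what the concluding step above needs. The main obstacle is the last part of the construction --- establishing co-type-definability of $E$ (and, relatedly, its transitivity). Turning ``there exists an independent parameter on which $x$ and $y$ disagree'' into a genuine partial type is delicate because forking-independence is not itself type-definable; the route through the stability of $\phi$, feeding off Fact~\ref{fact:lstp-gives-same-stable-set}, is what makes this manageable, and is the reason the proof of the strong-type statement runs through its Lascar-strong-type precursor.
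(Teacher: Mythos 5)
Your proof follows the paper's argument essentially step for step: the same equivalence relation built from ``agreement on $\phi$ at parameters independent over $A$,'' boundedness via Fact~\ref{fact:lstp-gives-same-stable-set}, and the upgrade to a finite $A$-definable relation via Lemma~\ref{lemma:cotypedef-eq-rel}. The one divergence worth flagging is in the definition of $E$: you require \emph{separate} independence ($x \indep_{A} c$ and $y \indep_{A} c$), whereas the paper fixes $q = \tp(b/A)$ and requires \emph{joint} independence of a realization $c \models q$ from the pair $xy$. Your choice makes the concluding step immediate, but it shifts the real work onto transitivity of $E$ -- your amalgamation sketch does go through, but only by invoking the dual of Fact~\ref{fact:lstp-gives-same-stable-set} (for $\phi$ read in the opposite variable) to move the witness $c$ off the middle element without changing any truth values. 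The paper makes the opposite trade: with joint independence, transitivity and co-type-definability are cheaper, and the price is paid at the end by introducing an auxiliary $a'' \models \stp(a/A)$ with $a'' \indep_{A} aa'b$, so that $b \indep_{A} aa''$ and $b \indep_{A} a'a''$ and the two instances $E^{\phi}_{q}(a,a'')$ and $E^{\phi}_{q}(a',a'')$ can be chained. Either bookkeeping works; just be aware that with separate independence $E$ is not obviously transitive, so that step (and the co-type-definability of $\neg E$, which both you and the paper only assert) cannot simply be waved through.
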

\begin{proof}
     Given a complete type $q(x)$ define the equivalence relation $E_q$ by
    \[
    E^\phi_q(a,a') \iff \text{for every $b \models q$ with $b \indep aa'$, $\models \phi(a,b) \bic \phi(a',b)$}.
    \]
    The complement of $E_q^\phi$ is defined by a partial type over $A$.
    By fact \ref{fact:lstp-gives-same-stable-set},
    if $\lstp(a) = \lstp(a')$, then $E_q^\phi(a,a')$ holds; this implies $E^\phi_q$ is refined by the equality of Lascar strong type,
    so it is bounded.
    A bounded, co-type-definable equivalence relation is a finite, definable equivalence relation by lemma \ref{lemma:cotypedef-eq-rel}, so $E^\phi_q$ is an $A$-definable finite equivalence relation.
   
    Now, suppose $\stp(a/A) = \stp(a'/A)$, $b \indep_{A} a$ and $b \indep_{A} a'$.
    Put $q = tp(b/A)$.
    By definition, $\tp(a/\acl^\eq(A)) = \tp(a/\acl^\eq(A))$, so $a,a'$ must be in the same $E_q^\phi$-class.
    Let $a''$ realize $\stp(a/A)$ such that $a'' \indep_{A} aa'b$.
    By transitivity, $aa'' \indep_{A} b$ and $a'a'' \indep_{A} b$.
    Then, since $E_q^\phi(a,a'')$ and $E_q^\phi(a',a'')$ we have
    $\phi(a,b) \bic \phi(a'',b) \bic \phi(a',b)$, as desired.
\end{proof}

The remainder of Shami's argument (Lemma 6.6 to the end of section 6 of \cite{Shami11}) goes through essentially unchanged except for substituting our lemma \ref{lemma:stp-gives-same-stable-set} for fact \ref{fact:lstp-gives-same-stable-set}.
\begin{thm}\label{stableforkingsymmetry}
    Let $T$ be simple.
    If $a \indep_C b$ and there is a stable formula $\psi(x;y)$ such that $\psi(x;b)$ forks over $C$, then there is a stable formula $\varphi(x;y)$ such that $\varphi(x;a)$ forks over $C$.
\end{thm}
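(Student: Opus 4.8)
The plan is to run Shami's symmetry argument from \cite{Shami11} essentially verbatim, using Lemma~\ref{lemma:stp-gives-same-stable-set} wherever Shami invokes Fact~\ref{fact:lstp-gives-same-stable-set}. As already observed above, the standing hypothesis $\lstp=\stp$ enters Shami's proof \emph{only} to license that fact; every other step is carried out in an arbitrary simple theory. Since Lemma~\ref{lemma:stp-gives-same-stable-set} delivers exactly the conclusion of Fact~\ref{fact:lstp-gives-same-stable-set} with $\stp$ in place of $\lstp$ and with no standing hypothesis on strong types, the whole chain becomes available unconditionally. Thus the genuine task is to confirm that $\lstp=\stp$ is used nowhere else and that the two notions of strong type match the closure operations appearing in the intermediate lemmas.

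I would carry this out in three steps. \emph{First}, re-prove the analog of \cite[Lemma~6.6]{Shami11}: for stable $\phi$ and $A\subseteq B$, the type $\tp_\phi(a/B)$ does not fork over $A$ if and only if it does not fork over $A$ in the sense of LS. The proof of that equivalence consumes $\lstp=\stp$ precisely at the appeal to Fact~\ref{fact:lstp-gives-same-stable-set}, so substituting Lemma~\ref{lemma:stp-gives-same-stable-set} yields the equivalence for all simple $T$. Here it is essential that ``forking in the sense of LS'' is defined through definability over $\acl(A)$, so that the $\acl^\eq(A)$-level hypothesis of Lemma~\ref{lemma:stp-gives-same-stable-set} (equality of strong type) lines up with the definition exactly; had the notion been phrased over a Lascar closure, the replacement would not be clean.

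\emph{Second}, from this coincidence I would extract the symmetry of ``stable non-forking,'' i.e.\ of the relation asserting that $\tp_\phi(a/Cb)$ does not fork over $C$ for every stable $\phi$. The LS-characterization converts non-forking of a stable $\phi$-type into the existence of a global extension definable over $\acl(C)$, and the local stability theory of $\phi$ (in which forking is symmetric, passing from $\phi$ to its dual $\phi^{*}(y,x):=\phi(x,y)$, again stable) transfers such a witness from the $a$-side to the $b$-side. \emph{Third}, I would read off the stated implication as the contrapositive of this symmetry: a stable $\psi(x;b)$ forking over $C$ witnesses the failure of stable non-forking on the $b$-side, and symmetry then produces a stable $\varphi(x;a)$ forking over $C$.

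The main obstacle is entirely in the bookkeeping of the first step: one must trace every use of an automorphism or type-equality in the argument of \cite[Lemma~6.6]{Shami11} and the ensuing symmetry derivation, and confirm that each can be weakened from ``fixing the Lascar closure'' to ``fixing $\acl^\eq(C)$'' once Lemma~\ref{lemma:stp-gives-same-stable-set} is in hand, including the passage between $\phi$-types and complete types implicit in the definition of stable forking. Because both the LS-notion and Lemma~\ref{lemma:stp-gives-same-stable-set} are already formulated at the level of strong types, I expect no residual dependence on $\lstp=\stp$ to surface, and the verification to go through without additional hypotheses.
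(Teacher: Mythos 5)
Your proposal is correct and follows essentially the same route as the paper: the paper likewise observes that $\lstp=\stp$ enters Shami's argument only through Fact~\ref{fact:lstp-gives-same-stable-set}, proves Lemma~\ref{lemma:stp-gives-same-stable-set} as its strong-type replacement, and then asserts that the rest of \cite[Lemma 6.6 ff.]{Shami11} goes through unchanged. Your three-step breakdown is just a more explicit accounting of that same substitution.
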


\subsection{Stable Forking with  Elements of rank $\leq 2$}

\subsubsection{Quasi-designs, and stability in there}

The eventual argument will require there to be an indiscernible sequence in $\tp(b/aC)$
such that $a$ is not in the algebraic closure of the sequence with $C$.
Such sequences may not always exist.
To this end, we will find some sufficient conditions for which they will exist.
The property we are concerned about is
\begin{center}
\begin{tabular}{lp{4in}}
    ($\maltese$) & $a \notin \acl(bC)$, $b \notin\acl(aC)$, and for every non-constant $aC$-indiscernible sequence $I$ in
        $\tp(b/aC)$, $a \in \acl(IC)$.
\end{tabular}
\end{center}
We will speak of elements $(a,b,C)$ satisifying property $\maltese$ in the natural way, and we will denote this condition by $\maltese(a,b,C)$.

\begin{prop}\label{prop:maltese}
\mbox{}
\begin{enumerate}
\item In the definition of $\maltese$ it is equivalent to require for every infinite set $D$ of distinct realizations
    of $\tp(b/aC)$ we have $a \in\acl(DC)$.

\item The property $\maltese$ is co-type-definable.
That is, there is a partial type $\Omega(x,y,z)$ such that for all $a,b,C$ (with $|a|=|x|,|b|=|y|,|C|=|z|$)
we have $\models \Omega(a,b,C)$ if and only if $(a,b,C)$ do not have $\maltese$.

\item If $\maltese(a,b,C)$ holds then there are formulas $\delta(x,z)$, $\zeta(y;xz)$, $\theta(x;zy_0\dots y_n)$ such that
    for some finite $c \subset C$
    \begin{enumerate}
        \item $\delta(x,z) \in \tp(ac)$
        \item $\zeta(y;ac) \in \tp(b/ac)$
        \item $\theta(x;z\bar y)$ is algebraic in $x$
        \item The following entailment holds
            \[
\delta(x,z) \vdash \forall y_0\dots y_n \left[
\left(\bigwedge_{i<j<n} y_i \neq y_j \land \bigwedge_i \zeta(y_i; xz)\right)
\cond \theta(x;zy_0\dots y_n)\right]
            \]
        \end{enumerate}
\end{enumerate}
\end{prop}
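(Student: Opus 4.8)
The plan is to prove part (1) by the usual extraction argument. One direction is immediate: a non-constant $aC$-indiscernible sequence $I$ in $\tp(b/aC)$ consists of pairwise distinct realizations (a repeated term would, by indiscernibility, force the sequence constant), so the infinite-set hypothesis at once gives $a\in\acl(IC)$. For the converse, suppose some infinite set $D$ of distinct realizations satisfied $a\notin\acl(DC)$; by Ramsey and compactness I would extract an $aC$-indiscernible sequence $I=(e_i)_{i<\omega}$ whose Ehrenfeucht--Mostowski type over $aC$ is finitely satisfiable in $D$. Then $I$ is a non-constant sequence of realizations of $\tp(b/aC)$, so by hypothesis $a\in\acl(IC)$, witnessed by a uniformly algebraic formula $\theta(x;\bar e,\bar c)$. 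Since ``$\theta(a;\bar y,\bar c)\wedge\exists^{\le k}x\,\theta(x;\bar y,\bar c)$'' holds of $\bar e$, finite satisfiability transfers it to a tuple from $D$, contradicting $a\notin\acl(DC)$.

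For parts (2) and (3) I would analyze one explicit partial type. Fix $a,b,C$ and let $\Pi$ be the type over $aC$, in variables $(y_i)_{i<\omega}$, asserting that the $y_i$ are pairwise distinct, each realizes $\tp(b/aC)$, and $a\notin\acl((y_i)_iC)$ (the last as the conjunction over all $\theta,\bar c\subseteq C,k$ of $\neg(\theta(a;\bar y,\bar c)\wedge\exists^{\le k}x\,\theta(x;\bar y,\bar c))$). By part (1), the infinite-set clause of $\maltese$ holds exactly when $\Pi$ is inconsistent. For part (2) I would then use that consistency of a partial type is a type-definable condition on its parameters: $\Pi$ is consistent iff each finite subtype is realizable, and ``this finite subtype is realizable'' is a single formula in $(a,b,C)$, so their conjunction is a partial type $R(x,y,z)$ defining ``there is an infinite set of distinct $\tp(b/aC)$-realizations over which $a$ is not algebraic.'' The infinite-set clause of $\maltese$ is thus the complement of $R$; together with the manifestly type-definable conditions $a\notin\acl(bC)$ and $b\notin\acl(aC)$ this is meant to assemble $\Omega$.

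For part (3), $\maltese(a,b,C)$ makes $\Pi$ inconsistent, so by compactness a finite subtype $\Pi_0$ is inconsistent. Let $\zeta(y;xz)$ be the conjunction of the finitely many instances of $\tp(b/aC)$ occurring in $\Pi_0$ (over a finite $c\subseteq C$), and $\theta_1,\dots,\theta_r$ the algebraic formulas negated there; inconsistency of $\Pi_0$ says precisely that any $n+1$ pairwise-distinct $\zeta(\cdot;a,c)$-realizations satisfy some algebraic $\theta_s(a;c\bar y)$. Setting $\theta(x;z\bar y):=\bigvee_s(\theta_s(x;z\bar y)\wedge\exists^{\le k_s}w\,\theta_s(w;z\bar y))$ gives a single formula uniformly algebraic in $x$ (at most $\sum_s k_s$ solutions) that is forced at $x=a$. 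Finally I would take $\delta(x,z)$ to be the formula $\forall\bar y\,[(\bigwedge_{i<j}y_i\ne y_j\wedge\bigwedge_i\zeta(y_i;xz))\cond\theta(x;z\bar y)]$ itself: the compactness computation is exactly the statement $\models\delta(a,c)$, so $\delta\in\tp(ac)$ and entailment (d) holds trivially, while $\zeta\in\tp(b/ac)$ since each conjunct lies in $\tp(b/aC)$.

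The step I expect to be the main obstacle is the assembly in part (2). The type-definable core is clean---it is $R$, built from consistency of $\Pi$---but the two non-algebraicity clauses sit on the wrong side: their negations $a\in\acl(bC)$ and $b\in\acl(aC)$ are only \emph{open} (countable disjunctions of algebraicity formulas), so $\neg\maltese=R\vee\{a\in\acl(bC)\}\vee\{b\in\acl(aC)\}$ is not literally the realization set of a single partial type. I would reconcile this by exploiting that each of $a\in\acl(bC)$ and $b\in\acl(aC)$ already forces the infinite-set clause to hold (in the first case every realization of $\tp(b/aC)$ puts $a$ in its algebraic closure, since a fixing automorphism carries $b$ to it; in the second $\tp(b/aC)$ has only finitely many realizations), so these clauses are disjoint from $R$ and really record the non-degeneracy standing throughout the applications of $\maltese$; the genuinely co-type-definable content to extract is the infinite-set clause via $R$. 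Beyond this, the only real care needed is the quantifier bookkeeping in the transfers above---the uniform algebraicity bounds and the reduction of the several $\theta_s$ to a single $\theta$.
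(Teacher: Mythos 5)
Your proof is correct and follows essentially the same route as the paper's: extraction of an $aC$-indiscernible sequence together with finite satisfiability for (1), encoding the failure of the infinite-set clause as a partial type (consistency of the type $\Pi$ being itself a type-definable condition on the parameters) for (2), and compactness applied to the inconsistency of that type for (3), with $\delta$, $\zeta$, $\theta$ read off from an inconsistent finite subtype. Your caveat about part (2) is also well taken: the clauses $a\notin\acl(bC)$ and $b\notin\acl(aC)$ are not co-type-definable, and the paper's own $\Omega$ silently encodes only the third clause of $\maltese$, so your reading of what (2) must mean coincides with what the paper actually proves.
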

\begin{proof}
(1) If $a$ is algebraic in every infinite set of realizations of $\tp(b/aC)$ and $C$ then certainly $a$ is algebraic
over any indiscernible sequence in $\tp(b/aC)$ and $C$.
Conversely, if there is some infinite set $B$ of realizations of $\tp(b/aC)$ for which $a$ is not algebraic over $BC$,
then we can, using compactness, get an arbitrarily large set $B' = (b_\alpha : \alpha < \lambda)$ of
realizations of $\tp(b/aC)$ such that $a \notin\acl(B'C)$.
From $B'$ we may extract a sequence $I$ which is indiscernible over $aC$.
But then $a \notin\acl(IC)$.

(2) Since the statement $x \notin\acl(z)$ is a partial type in $x$ and $z$, the desired type says
that there are infinitely many elements realizing $\tp(y/xz)$ over which $x$ is not algebraic.
\[
\exists y_0y_1\dots \left[x \notin\acl(z y_0y_1\dots)
    \land \bigwedge_i \tp(y_i/xz) = \tp(y/xz)
    \land \bigwedge_{i\neq j} y_i \neq y_j \right]
\]

(3) If $\maltese(a,b,C)$ holds then the partial type in (2) is inconsistent.
Hence there is some finite $c \subset C$, a formula $\zeta(y,ac) \in \tp(b/aC)$,
and an algebraic formula $\theta(x;zy_0 y_1\dots y_n)$, for some $n < \omega$,
such that
\[
\mathrm{ediag}(ac) \vdash \forall y_0\dots y_n \left[
\left(\bigwedge_{i<j<n} y_i \neq y_j \land \bigwedge_i \zeta(y_i; ac)\right)
\cond \theta(a;cy_0\dots y_n)\right]
\]
The right hand side is a formula, hence there is a formula $\delta(x,z) \in \tp(ac)$
which implies it, by compactness.
\end{proof}

A direct application of part (3) above shows that over the empty set,
there is always stable forking.
\begin{prop}\label{prop:maltese-over-empty}
If $\maltese(a,b,\emptyset)$, then there is a stable formula $\psi(x;b) \in \tp(a/b)$ which forks over $\emptyset$.
\end{prop}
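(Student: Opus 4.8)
The plan is to extract concrete formulas from Proposition~\ref{prop:maltese}(3) and assemble the stable forking formula by hand. Applying part (3) with $C=\emptyset$ (so the finite $c\subset C$ is empty) yields $\delta(x)\in\tp(a)$, a formula $\zeta(y;x)$ with $\zeta(y;a)\in\tp(b/a)$, and $\theta(x;\bar y)$ algebraic in $x$, say with $|\theta(\MM;\bar y)|\le k$ for all $\bar y$, such that $\delta(x)$ forces any $n+1$ distinct realizations of $\zeta(\,\cdot\,;x)$ into $\theta(x;\cdot)$. My candidate is $\psi_0(x;y):=\delta(x)\wedge\zeta(y;x)$, and $\psi_0(x;b)\in\tp(a/b)$ since $\delta(a)$ and $\zeta(b;a)$ hold. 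First I would verify that $\psi_0$ is stable by a counting argument: if $\psi_0$ had the order property, witnessed by $(a_i),(b_j)$ with $\models\psi_0(a_i;b_j)\iff i<j$, then fixing $k+1$ indices $i_0<\dots<i_k$ and $n+1$ larger indices $j_0<\dots<j_n$, the entailment would force $a_{i_0},\dots,a_{i_k}$ all into $\theta(\MM;b_{j_0},\dots,b_{j_n})$, contradicting the bound $k$.

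Next I would attempt to show $\psi_0(x;b)$ divides over $\emptyset$. Fix a Morley sequence $(b_i:i<\omega)$ over $\emptyset$ in $\tp(b/\emptyset)$ with $b_0=b$; since $b\notin\acl(\emptyset)$ (a consequence of $\maltese$), its terms are pairwise distinct. If $\{\psi_0(x;b_i):i<\omega\}$ is inconsistent, this sequence witnesses dividing and I am done. Otherwise let $a^*$ be a common realization. Applying the entailment to the block $b_0,\dots,b_n$ and again to the disjoint block $b_{n+1},\dots,b_{2n+1}$ gives $a^*\in\acl(b_0\cdots b_n)\cap\acl(b_{n+1}\cdots b_{2n+1})$; as these two blocks are independent over $\emptyset$, it follows that $a^*\in\acl(\emptyset)$. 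Hence every common realization lies in $\theta(\MM;b_0,\dots,b_n)\cap\acl(\emptyset)$, a finite set $G$ of size at most $k$.

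Finally I would delete these stray realizations. Since $G$ is a finite subset of $\acl(\emptyset)$, choose an algebraic formula $\rho(x)$ over $\emptyset$ with $G\subseteq\rho(\MM)$; because $a\notin\acl(\emptyset)$ we have $\neg\rho(a)$. Set $\psi(x;y):=\delta(x)\wedge\zeta(y;x)\wedge\neg\rho(x)$. This is still stable, since the parameter-free conjunct $\neg\rho$ does not disturb the counting argument above, and it still lies in $\tp(a/b)$. Along the same Morley sequence any common realization of $\{\psi(x;b_i)\}$ would lie in $G$ yet satisfy $\neg\rho$, which is impossible; so $\{\psi(x;b_i)\}$ is inconsistent and $\psi(x;b)$ divides, and a fortiori forks, over $\emptyset$.

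The main obstacle is exactly the possibility of common realizations in $\acl(\emptyset)$: a realization of $\psi_0(x;b)$ lying in $\acl(\emptyset)$ is automatically independent from $b$ over $\emptyset$, so $\psi_0$ by itself need not fork. The two-independent-blocks computation, which confines those realizations to $\acl(\emptyset)$, together with the finiteness coming from the algebraicity bound $k$, is what makes the algebraic-exclusion fix available. I would also be careful to invoke the standard facts that disjoint blocks of a Morley sequence are independent over $\emptyset$ and that $a^*\indep_\emptyset B$ with $a^*\in\acl(B)$ forces $a^*\in\acl(\emptyset)$.
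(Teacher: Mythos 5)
Your proposal is correct, and while the candidate formula and the stability argument coincide with the paper's (the paper also takes $\delta(x)\wedge\zeta(y;x)$ and kills the order property by observing that the entailment from Proposition~\ref{prop:maltese}(3) would force too many of the $a_i$ into the finite set $\theta(\MM;\bar y)$), your treatment of the forking step is genuinely different. The paper keeps the unmodified formula $\delta(x)\wedge\zeta(y;x)$ and argues by contradiction: assuming it does not fork over $\emptyset$, it shows that any realization of a \emph{non-forking} completion over $b$ lands in $\acl(\emptyset)$, and then asserts that $\psi(x,b)\vdash x\in\acl(b)$, contradicting $a\notin\acl(b)$. You instead isolate exactly the obstruction --- the (at most $k$ many) realizations lying in $\acl(\emptyset)$, pinned down by your two-independent-blocks computation along a Morley sequence --- and excise them with a parameter-free algebraic conjunct $\neg\rho(x)$, after which the same Morley sequence directly witnesses dividing. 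Your route buys something real: if $\delta(x)\wedge\zeta(b;x)$ happens to have a realization in $\acl(\emptyset)$, then that formula literally does \emph{not} fork over $\emptyset$ (it has an algebraic, hence non-forking, completion), and the paper's final inference only controls realizations of non-forking completions rather than all realizations, so it does not by itself rule this case out. Your modified formula is immune to this, at the modest cost of an extra compactness step to package $G$ into an $\emptyset$-definable algebraic formula. The only points worth nailing down explicitly are small: in the counting argument you should note that the order-property configuration forces the $b_{j_m}$ (and the $a_{i_\ell}$) to be pairwise distinct, so that the hypothesis $\bigwedge_{i<j}y_i\neq y_j$ of the entailment is actually met; and you should record that $a\notin\acl(\emptyset)$ and $b\notin\acl(\emptyset)$ both follow from $\maltese(a,b,\emptyset)$, which you use for $\neg\rho(a)$ and for the distinctness of the Morley sequence respectively. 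Neither is a gap.
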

\begin{proof}
Suppose $\maltese(a,b,\emptyset)$ holds.
From proposition \ref{prop:maltese} there are formulas $\delta(x)$,$\zeta(y,x)$, and $\theta(x;y_1\dots y_n)$ for some $n < \omega$.
Since $\maltese$ holds over the empty set, the ``finite tuple'' in question is null; hence, the formulas reduce to
the following entailment.
\begin{equation}\label{eq:delta-x}
\delta(x) \vdash \forall y_0\dots y_n \left[
\left(\bigwedge_{i<j<n} y_i \neq y_j \land \bigwedge_i \zeta(y_i; a)\right)
\cond \theta(a;y_0\dots y_n)\right]
\end{equation}
Consider the formula $\psi(x;b) \equiv \delta(x) \land \zeta(b;x) \in \tp(a/b)$.
We will show $\psi$ is a stable formula and $\psi(x;b)$ forks over $\emptyset$.

\underline{Stable:} Suppose $\psi$ is not stable, and let $I = (d_i e_i)_{i \in \ZZ}$ be an
indiscernible sequence witnessing this by $\models \psi(d_i; e_k)$ iff $i \leq k$.
(Note that this sequence is necessarily non-constant.)
Then $\delta(d_i)$ holds for all $i$.
Consider the element $d_{-1}$.
Since $\psi(d_{-1};e_k)$ holds for $1 \leq k \leq n$,
so does $\zeta(e_k; d_{-1})$.
Hence, from the entailment in equation (\ref{eq:delta-x}),
$\theta(d_{-1}; e_1 e_2 \dots e_n)$ is true, which witnesses
$d_{-1} \in \acl(e_1 e_2 \dots e_n)$.
This contradicts that $I$ is an indiscernible sequence.
Hence $\psi$ is stable.

\underline{Forking:} Suppose $\psi(x,b)$ does not fork over $\emptyset$.
Let $r(x,b)$ be a non-forking extension of the partial type $\{\psi(x,b)\}$,
and let $c \models r(x,b)$ realize it, so $c \indep b$.

Let $I = (b_i : i<\kappa)$ be an indiscernible sequence
such that $b_0=b$ and $\kappa$ is sufficiently large for the remainder.
Note that $r'(x,I) = \left\{r(x,b_i):i<\kappa\right\}$ is consistent, and we may
choose $c'\models r'(x,I)$ such that $c'\equiv_b c$ and $c'\indep I$.
As, by assumption, $\kappa$ is large enough, there is an infinite subset $I_0\subset
I$ which is indiscernible over $c'$.
Without loss of generality, we assume that $I_0 = (b_i : i<\omega)$.
Now, note that $\psi(c',b_i)$
for each $i<\omega$, so in particular, $\models \zeta(c',b_0,...,b_N)$.
It follows that $c'\in\acl(b_0...b_N)$, and since $c'\indep b_0...b_N$, we have
$c'\in\acl(\emptyset)$.
This implies $c\in\acl(\emptyset)$.

Now, if \emph{every} non-forking extension $\tp(c^*/b)$ of  $\{\psi(x,b)\}$
induces $c^*\in \acl(\emptyset)$,
then $\models\psi(x,b) \implies x\in\acl(b)$.
Since $\psi(x,b) \in\tp(ab)$, this contradicts the assumption that $a\notin\acl(b)$.
Thus, $\psi(x,b)$ forks over $\emptyset$.
\end{proof}

A configuration related to $\maltese$ is the \textit{quasidesign}.
A \textit{quasidesign} is a partial type $r(x,y)$ such that
for any $a,b \models r$ we have
$a \notin\acl(b)$, $b\notin\acl(a)$, and
for all distinct $b_1,b_2$ the set
\[
\{ a' : \text{$\models r(a',b_1)$ and $\models (a',b_2)$}\}
\]
is finite.
A \textit{pseudoplane} is a quasidesign where also for all distinct
$a_1,a_2$ the set $\{ b' : \text{$\models r(a_1,b')$ and $\models (a_2,b')$}\}$
is also finite.
A theory \textit{omits quasidesigns} (pseudoplanes) if there is no partial type
which is a quasidesign (pseudoplane).
Quasidesigns, or rather, the lack thereof, are equivalent to 1-basedness.
\begin{thm}[see   \cite{pillay-geom-book}]
The following are equivalent.
\begin{enumerate}
\item $T$ is 1-based
\item $T$ admits no quasidesigns
\item $T$ admits no pseudoplanes
\end{enumerate}
\end{thm}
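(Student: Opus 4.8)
The plan is to establish the cycle $(2)\Rightarrow(3)\Rightarrow(1)\Rightarrow(2)$. The first link is immediate from the definitions: a pseudoplane is by definition a quasidesign satisfying one extra finiteness clause, so a theory admitting no quasidesigns \emph{a fortiori} admits no pseudoplanes. For the two genuine links I would translate each combinatorial hypothesis into forking, using the standard reformulation of $1$-basedness: $T$ is $1$-based iff $\Cb(a/b)\subset\acl(a)$ for every tuple $a$ and set $b$, equivalently $a\indep_{\acl(a)\cap\acl(b)}b$ for all $a,b$.

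For $(1)\Rightarrow(2)$ I argue the contrapositive and extract a failure of $1$-basedness from a quasidesign $r(x,y)$. Since $b\notin\acl(a)$ for every realization of $r$, the set of lines through a fixed point, $\{y:r(a,y)\}$, has no algebraic points over $a$; if nonempty it is infinite, so a given point $a$ lies on at least two distinct lines $b_1\neq b_2$. The finite-fiber clause gives $a\in\acl(b_1b_2)$ while $a\notin\acl(b_1)$, so $a\nindep_{b_1}b_2$. Now take a \emph{generic} incident pair $(a,b)$, meaning $r(a,b)$ with $a\nindep b$, and set $c=\Cb(a/b)\in\bdd(b)$. I claim $c\not\subset\acl(a)$: were $c\subset\acl(a)$, a short modularity computation on the generic pair would force the (infinite) family of lines through $a$ to be controlled by finitely much data over $a$ — contradicting that two lines already pin $a$ down yet infinitely many lines pass through it. Hence $\Cb(a/b)\not\subset\acl(a)$, witnessing non-$1$-basedness.

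For $(3)\Rightarrow(1)$, again by contraposition, I build a pseudoplane out of a witness to non-$1$-basedness. Fix $a$ and $b$ with $a\nindep b$ and $c=\Cb(a/b)\notin\acl(a)$. Using the basic properties of canonical bases, $c$ is recovered from a short Morley sequence in $\tp(b/c)$, so $c\in\acl(b_i b_j)$ for independent $b_i,b_j$ realizing $\tp(b/c)$. I would declare the points to be the conjugates of $a$, the lines to be the conjugates of $c$, and incidence to be forking dependence, cut out by an appropriate partial type. After passing to a suitably generic (minimal) configuration one arranges the non-degeneracy conditions $a\notin\acl(c)$ and $c\notin\acl(a)$, and the two finite-fiber conditions — two distinct lines incident to only finitely many common points, two distinct points to only finitely many common lines — become the assertion that incidence with two \emph{independent} objects forces algebraicity, which is exactly the defining property of the canonical base together with symmetry of forking.

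The main obstacle is this last construction, and specifically securing \emph{both} finite-fiber conditions simultaneously. It is comparatively painless to read off a quasidesign (one-sided finiteness) from non-$1$-basedness; promoting it to a genuine pseudoplane requires the incidence relation to be balanced, which I expect to achieve by passing to a type of minimal $SU$-rank (or local rank) and exploiting symmetry of forking, so that neither the point-fibers nor the line-fibers can be infinite. A secondary technical point is ensuring that ``forking incidence'' is genuinely type-definable, so that $r$ really is a partial type; this is where I would lean on the finite-rank bookkeeping to replace the abstract forking relation by a concrete formula. With all three links in place the cycle closes and the three conditions are equivalent.
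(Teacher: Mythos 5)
The paper offers no proof of this theorem at all --- it is imported from \cite{pillay-geom-book} as a black box --- so there is nothing in-house to compare against; I can only assess your sketch on its own terms. The cycle $(2)\Rightarrow(3)\Rightarrow(1)\Rightarrow(2)$ is the right architecture, $(2)\Rightarrow(3)$ is indeed immediate from the definitions, and you have correctly identified the canonical base as the engine of both remaining links. But each of those links has a genuine gap. In $(1)\Rightarrow(2)$, the contradiction you describe is not a contradiction: ``two lines already pin $a$ down yet infinitely many lines pass through it'' is a perfectly consistent configuration --- it is exactly what happens in the plane over an algebraically closed field, which is of course not $1$-based. The step you label ``a short modularity computation'' is the entire content of this direction and must actually be performed. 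Concretely: given $(a,b)\models r$ with $c=\Cb(a/b)\subseteq\acl^{\eq}(a)$, use $b\notin\acl(a)$ to pick a second line $b'\models\tp(b/\acl^{\eq}(a))$ with $b'\neq b$ and $b'\indep_{a}b$; then $c=\Cb(a/b')$ as well, so $c\subseteq\bdd(b')$, and transitivity turns $a\indep_{c}b$ into $a\indep_{b'}b$, while the finite-intersection clause of the quasidesign gives $a\in\acl(bb')\setminus\acl(b')$. That pair of facts is the contradiction; your version never produces a false statement.

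The gap in $(3)\Rightarrow(1)$ is larger. First, the pseudoplane's two finiteness clauses quantify over \emph{arbitrary} distinct pairs of lines (resp.\ points), not independent ones, so ``incidence with two independent objects forces algebraicity, which is exactly the defining property of the canonical base'' is not the statement you need --- and even for independent pairs, the canonical base only gives you that $c$ is \emph{bounded} over a Morley sequence in $\tp(b/c)$, not that the relevant fiber is finite. Handling dependent pairs is precisely where the rank-minimization you allude to must be carried out in detail, and it is the hard part of the classical proof, not an afterthought. Second, you dismiss as ``secondary'' the requirement that the incidence relation be type-definable, but this is the crux: forking dependence is not type-definable, and the standard argument works hard (via definability of types in the stable case, and via exactly the hyperimaginary/canonical-base issues this paper is devoted to in the simple case) to replace $\Cb(a/b)$ by a finite imaginary tuple and ``incidence'' by an honest partial type $r(x,y)$. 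As written, your points, lines, and incidence relation are not yet objects to which the definition of a pseudoplane applies.
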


The salient property of quasidesigns which relates them to $\maltese$ is
the following
\begin{prop}
Suppose $r(x,y)$ is a quasidesign over $C$,
        $b$ is an element and $(a_i : i <\omega)$ is a set of elements such that
        $r(a_i,b)$ holds for all $i$.
        Then $b \in \acl(C(a_i )_{ i < \omega})$.
\end{prop}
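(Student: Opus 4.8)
The plan is to argue by contradiction, driving everything through the finiteness clause in the definition of a quasidesign. I would begin by assuming the conclusion fails, i.e. $b \notin \acl(C \cup \{a_i : i<\omega\})$, so that $\tp(b/C(a_i)_{i<\omega})$ is non-algebraic and hence admits a realization $b'$ with $b' \neq b$.

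The crucial step will be to check that $r(a_i,b')$ holds for every $i<\omega$. Fix $i$. The partial type $r(a_i,y)$ is a type over $Ca_i$, and it is satisfied by $b$ by hypothesis; since $b'$ realizes $\tp(b/C(a_j)_{j<\omega})$ and $a_i$ occurs among the parameters, we have $b' \equiv_{Ca_i} b$, and therefore $b'$ satisfies $r(a_i,y)$ as well, i.e. $\models r(a_i,b')$. Consequently every $a_i$ lies in the common-realization set $\{a' : \models r(a',b)\text{ and } \models r(a',b')\}$.

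To finish, I would apply the quasidesign property to the distinct pair $b,b'$: this set is finite. But the $a_i$ were assumed pairwise distinct (we are handed an infinite \emph{set} of realizations of $r(x,b)$), so the set contains infinitely many elements, a contradiction. Hence $b \in \acl(C \cup \{a_i : i<\omega\})$, as desired.

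I do not expect a serious obstacle here: once the conjugate $b'$ is produced, the contradiction is essentially mechanical, and all the real content has been front-loaded into the finiteness clause of the quasidesign definition. The two points that need genuine (if routine) care are that $r$ transfers from $b$ to the $Ca_i$-conjugate $b'$ — so that $b'$ is again $r$-joined to \emph{all} the $a_i$ — and that the $a_i$ are genuinely distinct, since that distinctness is precisely what makes the common-realization set infinite and collides with the quasidesign bound.
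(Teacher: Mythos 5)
Your proof is correct: the paper actually states this proposition without proof, and your argument — conjugating $b$ to a distinct $b'$ over $C(a_i)_{i<\omega}$, transferring the partial type $r(a_i,y)$ to $b'$, and contradicting the finiteness clause of the quasidesign definition with the infinitely many distinct $a_i$ — is exactly the intended, standard argument. No gaps.
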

\begin{prop}
If $r(x,y)$ is a quasidesign over $C$, and $a,b$ are elements such that
$r(a,b)$ holds, then $\maltese(b,a,C)$ holds.
\end{prop}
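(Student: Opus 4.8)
The plan is to verify the three clauses of $\maltese(b,a,C)$ one at a time: the first two are read off directly from the definition of a quasidesign, and the third is obtained by packaging the preceding proposition into the language of $\maltese$. Throughout, I keep in mind that $\maltese(b,a,C)$ is the display defining $\maltese$ with the roles of the first two arguments interchanged, so its three requirements are $b\notin\acl(aC)$, $a\notin\acl(bC)$, and: for every non-constant $bC$-indiscernible sequence $I$ in $\tp(a/bC)$ one has $b\in\acl(IC)$.

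First I would dispatch the two non-algebraicity clauses. Since $r$ is a quasidesign over $C$ and $r(a,b)$ holds, the defining conditions (with algebraic closure taken relative to $C$) give $a\notin\acl(bC)$ and $b\notin\acl(aC)$ immediately; these are exactly the first two requirements of $\maltese(b,a,C)$.

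For the third clause I would invoke the reformulation in Proposition~\ref{prop:maltese}(1): after the role-swap, it says that to obtain $\maltese(b,a,C)$ it suffices to show that for every infinite set $D$ of distinct realizations of $\tp(a/bC)$ we have $b\in\acl(DC)$. So fix such a set $D=\{a_i:i<\omega\}$; such a set exists because $a\notin\acl(bC)$ makes $\tp(a/bC)$ non-algebraic. Each $a_i$ realizes $\tp(a/bC)$, so $a_i\equiv_{bC}a$, and since the partial type $r(x,y)$ is $C$-invariant and $r(a,b)$ holds, applying an automorphism fixing $bC$ that carries $a$ to $a_i$ yields $r(a_i,b)$ for every $i$. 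The preceding proposition, applied to the family $(a_i:i<\omega)$ over the base $C$, then gives $b\in\acl(C(a_i)_{i<\omega})=\acl(DC)$, which is what Proposition~\ref{prop:maltese}(1) requires.

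I expect essentially no obstacle here: all of the real content lives in the preceding proposition, which is itself an easy consequence of the finiteness clause in the definition of a quasidesign, and the present statement merely re-expresses it in the vocabulary of $\maltese$. The only points demanding a moment's care are the bookkeeping of which element plays which role in $\maltese(b,a,C)$, and the use of $C$-invariance of $r$ to transport $r(a,b)$ along realizations of $\tp(a/bC)$. (Equivalently, one could bypass Proposition~\ref{prop:maltese}(1) and argue directly: a non-constant $bC$-indiscernible sequence in $\tp(a/bC)$ consists of distinct realizations of $\tp(a/bC)$, each satisfying $r(\cdot,b)$, so the preceding proposition again yields $b\in\acl(IC)$.)
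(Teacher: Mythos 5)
Your proposal is correct and takes essentially the same approach as the paper: read the two non-algebraicity clauses off the definition of a quasidesign, observe that every realization of $\tp(a/bC)$ satisfies $r(x,b)$, and apply the preceding proposition to conclude $b\in\acl(IC)$. The only difference is your optional detour through Proposition~\ref{prop:maltese}(1), which the paper skips by working directly with a non-constant indiscernible sequence (whose terms are automatically distinct), as you note in your closing parenthetical.
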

\begin{proof}
Let $r$ be a quasidesign and $\models r(a,b)$.
By definition $a \notin\acl(Cb)$ and $b \notin\acl(Ca)$.
To show $\maltese(b,a,C)$ we only need to show that for any
indiscernible sequence $I$ in $\tp(a/Cb)$ we have $b \in\acl(CI)$.
Yet, for any such sequence $I = (a_i : i < \omega)$ we have
$\models r(a_i,b)$ for all $i$.
Hence $b \in \acl(CI)$.
\end{proof}

In the next subsection, we will use these facts to demonstrate that forking \emph{over $\emptyset$} between an element/tuple of SU-rank 2 and one of finite SU-rank is always witnessed by a stable formula. The obstruction to removing the restriction to $\emptyset$ in the base is Proposition \label{prop:maltese-over-empty}, which we have not been able to generalize.

\subsubsection{Stable Forking with  Elements of rank $\leq 2$}

At last, we come to the proof that forking between a finite-rank element and one of SU-rank $\leq 2$ is always witness by a stable formula. The main theorem of this section -- theorem \ref{thm:rank2stable} -- is a good illustration of the use of DWIP, which is why we include it here, but this usage requires that bit of additional work carried out in the previous subsection on quasidesigns.
That main theorem treats elements of SU-rank exactly 2, so to start with, we deal with ranks $<2$.

\begin{prop}\label{thm:rank-1}
    Let $C\subset\MM$, and let $a,b\in\MM$ be finite tuples.
    If $SU(b/C) = 1$ and $a \nindep_{C} b$ then there are stable forking formulas in both $tp(a/bC)$ and $tp(b/aC)$.
\end{prop}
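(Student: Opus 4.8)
The plan is to produce a stable forking formula in $\tp(b/aC)$ directly and then transfer it to $\tp(a/bC)$ using the symmetry of stable forking (Theorem~\ref{stableforkingsymmetry}), so that the rank hypothesis only has to be exploited on the rank-$1$ side. The first observation is that the hypotheses force $b$ to be algebraic over $aC$: since $a \nindep_C b$ we also have $b \nindep_C a$, and because $SU(b/C)=1$ is finite, forking strictly drops the rank, whence $SU(b/aC)=0$, i.e. $b\in\acl(aC)$. On the other hand $SU(b/C)=1$ gives $b\notin\acl(C)$, so $\tp(b/C)$ is non-algebraic.

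Next I would extract the right algebraic formula. Among all formulas $\theta(y;a,c)$ (with $c\subset C$ finite) that lie in $\tp(b/aC)$ and are algebraic in $y$, choose one whose solution set $\theta(\MM;a,c)$ has the least possible cardinality $k$. A standard minimality argument shows that all solutions of $\theta(y;a,c)$ then share the same type over $aC$: otherwise a formula over $aC$ separating two solutions could be conjoined to $\theta$ to produce a shorter algebraic formula still satisfied by $b$, contradicting minimality. In particular every solution realizes $\tp(b/C)$, which is non-algebraic; since a realization of a non-algebraic type over $C$ cannot have finite orbit over $C$, no solution of $\theta(y;a,c)$ lies in $\acl(C)$.

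I then claim $\theta(y;a,c)$ forks over $C$. If it did not, it would have a realization $b^{*}$ with $b^{*}\indep_C a$ (absorbing $c$ into $C$). But $b^{*}$ satisfies an algebraic formula over $aC$, so $SU(b^{*}/aC)=0$, and the independence gives $SU(b^{*}/C)=SU(b^{*}/aC)=0$, i.e. $b^{*}\in\acl(C)$ — contradicting the previous paragraph, since $b^{*}$ is one of the finitely many solutions $\theta(\MM;a,c)$. To make $\theta$ honestly stable rather than merely algebraic at the single parameter $(a,c)$, I replace it by the uniformly algebraic formula $\theta^{\leq k}(y;x,z):=\theta(y;x,z)\wedge\exists^{\leq k}y'\,\theta(y';x,z)$, which bounds the number of solutions by $k$ for every value of the parameters and hence cannot have the order property; it still holds of $b$ and has the same solution set at $(a,c)$, so it still forks over $C$. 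This yields a stable forking formula in $\tp(b/aC)$, and Theorem~\ref{stableforkingsymmetry} then immediately produces one in $\tp(a/bC)$.

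The only delicate point is the passage from ``algebraic at the fixed parameter $(a,c)$'' to ``stable as a formula,'' which the uniform bound $\exists^{\leq k}$ resolves; the remaining ingredients are the interplay between forking and the drop in $SU$-rank, the minimal-algebraic-formula argument, and the symmetry theorem. I do not expect a genuine obstacle, because the rank-$1$ hypothesis does the heavy lifting by collapsing $\tp(b/aC)$ to an algebraic type, so that no appeal to DWIP, quasidesigns, or property $\maltese$ is required (these enter only for the rank-$2$ theorem that follows).
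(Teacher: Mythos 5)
Your proof is correct and takes essentially the same route as the paper's: deduce $b\in\acl(aC)\setminus\acl(C)$ from the rank hypothesis, exhibit an algebraic (hence stable) forking formula in $\tp(b/aC)$, and transfer it to $\tp(a/bC)$ via Theorem~\ref{stableforkingsymmetry}. Your two refinements --- taking a minimal algebraic formula so that every solution realizes $\tp(b/C)$ and hence none lies in $\acl(C)$ (an arbitrary algebraic formula in $\tp(b/aC)$ need not fork over $C$, e.g.\ if it also admits solutions inside $C$), and conjoining $\exists^{\leq k}$ so the formula is uniformly algebraic and therefore stable as a partitioned formula rather than merely algebraic at one parameter --- are worthwhile, since the paper's one-line justifications elide both points.
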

\begin{proof}
    From $a \nindep_{C} b$, we have $SU(b/aC) < SU(b/C) = 1$, so $b\in\acl(aC)\setminus \acl(C)$.
Let $\theta(y;ac) \in tp(b/aC)$ be an algebraic formula.
    Then $\theta(y;ac)$ forks over $C$ because $b \notin \acl(C)$, and it is stable because it is algebraic.
    The stable forking formula inside $\tp(b/aC)$ then follows from theorem \ref{stableforkingsymmetry}.
\end{proof}

We next show stable forking can be passed ``upward'' through algebraic closure.
It is similar to the result of Kim \cite{KimInThere} that if $E(y;z)$ is a finite equivalence relation and $\varphi(x;y)$ is any formula then $\exists y[\varphi(x;y)\land E(y;z)]$ is stable.
For this one claim we do not need to assume $T$ is simple.

\begin{lemma}
    Let $T$ be an arbitrary theory.
    Suppose $\zeta(x;y)$ is a stable formula
    and $\theta(y;zw)$ is algebraic in $y$.
    Let $\psi(x;zw)$ be the formula $\exists y[\zeta(x;y) \land \theta(y;zw)]$.
    Then,
    \begin{enumerate}
        \item $\psi(x;zw)$ is stable;
        \item if there are elements $a,b,c,d$ and a set $D$ containing $d$ such that 
        $a \models \zeta(x;c)$, $\zeta(x;c)$ forks over $D$, and $\theta(x;bd)$ isolates $\tp(c/bD)$ (so $c \in \acl(bD)$)
        then $\psi(x;b)$ forks over $D$.
    \end{enumerate}
\end{lemma}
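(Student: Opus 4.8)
The plan is to prove the two clauses by essentially independent arguments. Clause (1) is an instance of the principle that a stable formula stays stable when existentially quantified along an \emph{algebraic} (bounded-fibre) formula; I would prove it by the order property, converting a would-be order property for $\psi$ into one for $\zeta$. Clause (2) is a direct transfer of forking from $\zeta$ to $\psi$, using that the fibre of $\theta(\,\cdot\,;bd)$ is a finite set of $D$-conjugates of $c$.

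For (1), I would argue by contradiction. Suppose $\psi(x;zw)$ is unstable and fix the order property: an indiscernible sequence $(a_i,e_i)_{i<\omega}$, with $e_i$ interpreting $zw$, such that $\models\psi(a_i;e_j)\iff i<j$. Let $N$ be a uniform bound on the number of solutions of $\theta(y;\,\cdot\,)$ in $y$, and for each $j$ let $\bar b_j=(b_j^1,\dots,b_j^N)$ enumerate, with repetitions if necessary, the solution set $\{y:\models\theta(y;e_j)\}$. The two statements ``each $b_j^k$ solves $\theta(\,\cdot\,;e_j)$'' and ``$\theta(\,\cdot\,;e_j)$ has no solution outside $\bar b_j$'' each concern the single index $j$, so after extracting an indiscernible sequence from $(a_i,e_i,\bar b_i)_{i<\omega}$ I may assume this enriched sequence is indiscernible, still satisfies $\models\psi(a_i;e_j)\iff i<j$ (a property of ordered pairs, hence preserved), and still has each $\bar b_j$ listing all solutions of $\theta(\,\cdot\,;e_j)$.

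The crux is then short. Since $\bar b_j$ exhausts the solutions of $\theta(\,\cdot\,;e_j)$, we have $\models\psi(a_i;e_j)$ iff $\bigvee_{k\le N}\zeta(a_i;b_j^k)$. By indiscernibility the set $K:=\{k:\models\zeta(a_i;b_j^k)\}$ depends only on the order type of $(i,j)$; so it is one fixed set for all pairs with $i<j$, nonempty there because $\models\psi(a_i;e_j)$, and the empty set for $i\ge j$ because $\not\models\psi(a_i;e_j)$. Choosing any $k^*\in K$ and putting $c_j:=b_j^{k^*}$ yields $\models\zeta(a_i;c_j)\iff i<j$, an order property for $\zeta$, contradicting its stability. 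I expect this uniformity to be the main obstacle, and the one routine-looking step that actually fails if handled carelessly: attaching a single witness per pair leaves open the ``diagonal'' pattern $\zeta(a_i;c_j)\iff i=j$, which is perfectly consistent with stability (equality realizes it). Carrying the \emph{entire} fibre $\bar b_j$ is exactly what forces the witness index $k^*$ to be uniform across pairs and so produces a genuine $\zeta$-order.

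For (2), I would first check $a\models\psi(x;bd)$: we have $\models\zeta(a;c)$, and since $\theta(\,\cdot\,;bd)$ isolates $\tp(c/bD)$ we have $\models\theta(c;bd)$, so $c$ witnesses the existential quantifier. As $\theta(\,\cdot\,;bd)$ is algebraic, its solution set is a finite set $\{c_1,\dots,c_m\}$ containing $c$; and as it isolates $\tp(c/bD)$, every solution realizes $\tp(c/bD)$, whence $c_i\equiv_D c$ for each $i$. Because these are all the solutions, $\psi(x;bd)$ is equivalent to the finite disjunction $\bigvee_{i\le m}\zeta(x;c_i)$. Now forking over $D$ is invariant under automorphisms fixing $D$, so from $c_i\equiv_D c$ and the hypothesis that $\zeta(x;c)$ forks over $D$ it follows that each $\zeta(x;c_i)$ forks over $D$; and a finite disjunction of formulas each forking over $D$ itself forks over $D$ (take the union of the dividing disjunctions witnessing forking of the individual disjuncts). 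Absorbing $d\in D$ into the base, this says precisely that $\psi(x;b)$ forks over $D$. I do not anticipate any real difficulty in (2) beyond these two standard facts about forking (conjugation over the base and closure under finite disjunction); all the substance of the lemma sits in clause (1).
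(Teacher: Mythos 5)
Your proof is correct. Clause (2) is essentially the paper's own argument: enumerate the finite $\theta(\cdot\,;bd)$-fibre, note that each of its elements is a $D$-conjugate of $c$ (so each corresponding instance of $\zeta$ forks over $D$), and observe that $\psi(x;bd)$ implies the finite disjunction of these forking instances. For clause (1) you reach the same conclusion by a genuinely different mechanism. The paper takes an order-property sequence of length $\omega+\omega$, fixes the single column $b_\omega c_\omega$, pigeonholes the existential witnesses $d_i$ ($i<\omega$) into the finite fibre of $\theta(y;b_\omega c_\omega)$ to find one witness $d'$ used infinitely often, and then invokes the fact that for stable $\zeta$ the trace $\{i:\ \models\zeta(a_i;d')\}$ on an indiscernible sequence is finite or cofinite, producing an index $k>\omega$ with $\models\psi(a_k;b_\omega c_\omega)$ --- a contradiction. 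You instead attach the \emph{entire} fibre $\bar b_j$ to each column, re-extract indiscernibility of the enriched sequence (correctly noting that both the order pattern and the exhaustiveness of $\bar b_j$ are preserved, being properties of pairs and of single indices respectively), and use indiscernibility to make the witness index uniform across all pairs $i<j$, yielding a bona fide order property for $\zeta$ itself. Your route needs only the definition of stability via the order property plus the standard extraction of indiscernibles, while the paper's uses the slightly stronger finite/cofinite lemma for stable formulas along indiscernible sequences but avoids the re-extraction and the uniformity argument. Your remark that carrying the whole fibre is what rules out the ``diagonal'' pattern $\zeta(a_i;c_j)\iff i=j$ identifies exactly the point where a naive one-witness-per-pair argument would fail; the paper's pigeonhole on a single column is its own way of dodging the same trap.
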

\begin{proof}[Proof of (1)]
    Towards a contradiction, suppose $\psi(x;zw)$ is unstable.
    Let $(a_ib_ic_i : i < \omega + \omega)$ be an indiscernible sequence witnessing the order property -- i.e. $\models \psi(a_i;b_jc_j)$ iff $i \leq j$.
    For $i < \omega$, let $d_i$ be the element
    witnessed by the existential in $\psi(a_i; b_\omega c_\omega)$.
    Since $\theta(y;b_\omega c_\omega)$ is algebraic, there are only finitely many possible $d_i$,
    so by the pigeonhole principle at least one, $d'$, is repeated infinitely often.
    As $\zeta$ is stable and $(a_i)_{i<\omega+\omega}$ is indiscernible, the set $\left\{i<\omega{+}\omega: \,\,\models\zeta(a_i;d')\right\}$ is either finite or cofinite,     and by our choice of $d'$, it must be cofinite.
    Consequently, there are indices $k > \omega$ such that $\models \zeta(a_k;d')$, and this entails $\models\psi(a_k; b_\omega c_\omega)$,
    a contradiction .
\end{proof}

\begin{proof}[Proof of (2)]
    Let $(c_i)_{i<n}$ enumerate all $n$ elements satisfying $\theta(y;bd)$.
    We may assume all the $c_i$ have the same type over $bD$ as $c$
    since $\theta$ isolates $\tp(c/bD)$.
    Since $\zeta(x;c)$ forks over $D$, so does $\bigvee_{i<n} \zeta(x;c_i)$.
    As $\psi(x;bd) \vdash \bigvee_{i<n} \zeta(x;c_i)$,  $\psi(x;bd)$ forks.
\end{proof}

We restate the previous lemma in a more useful way.
This corollary is what we mean by stable forking passing ``upward'' in algebraic closure.
The stable forking of $a$ with $d$ is passed to $a$ and $b$.
\begin{cor}\label{thm:stable-upward-in-acl}
    Let $T$ be an arbitrary theory.
    Suppose $a$ and $b$ are tuples where $tp(a/Cb)$ forks.
    Moreover, suppose there is an element $d \in acl(Cb)$ such that $tp(a/Cd)$ forks
    via a stable formula. Then $tp(a/Cb)$ also contains
    a stable forking formula.
\end{cor}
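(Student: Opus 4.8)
The plan is to obtain the corollary as a direct instance of the preceding lemma, once the data are matched up correctly; the only genuine work is to manufacture, from the bare hypothesis $d \in \acl(Cb)$, the algebraic formula that part (2) of the lemma requires. Throughout I take $C$ as the base set (so ``forks'' always means ``forks over $C$'').

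First I would unpack the second hypothesis. Saying that $\tp(a/Cd)$ forks via a stable formula means there is a stable formula $\zeta(x;d)$ — where I absorb any additional parameters drawn from $C$ into suppressed base parameters — with $a \models \zeta(x;d)$ and $\zeta(x;d)$ forking over $C$. This supplies the stable $\zeta$ of the lemma, and the element $d$ plays the role of the lemma's $c$.

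Next I would produce the algebraic $\theta$. Since $d \in \acl(Cb)$, the type $\tp(d/Cb)$ is algebraic, hence isolated: among the formulas in $\tp(d/Cb)$ having finitely many solutions, one with the fewest solutions must entail every other formula of the type, for otherwise intersecting with a non-entailed formula would strictly reduce the (finite) solution count. Let $\theta(y;e)$, with $e \in Cb$, be such an isolating algebraic formula, and split its parameters into a part coming from $b$ and a part coming from $C$ (the latter matching the base parameter in the lemma's $\theta(y;zw)$). Then $\theta$ is algebraic in $y$ and isolates $\tp(d/Cb)$, which is exactly the hypothesis of part (2) with the lemma's $c$ taken to be $d$.

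Now I would apply the lemma: part (1) gives that $\psi(x;e) = \exists y[\zeta(x;y)\land\theta(y;e)]$ is stable, and part (2) gives that $\psi(x;e)$ forks over $C$. Finally $a \models \psi(x;e)$, since $y := d$ witnesses the existential — $\zeta(a;d)$ holds by choice of $\zeta$, and $\theta(d;e)$ holds because $d$ realizes $\tp(d/Cb)$ — so $\psi(x;e)\in\tp(a/Cb)$ is the desired stable forking formula. I expect no substantive obstacle: the real content lives in the lemma, and what remains is bookkeeping, namely invoking the standard fact that algebraic types are isolated (to get $\theta$) and folding the $C$-parameters into the base so the matching with the variable pattern $\zeta(x;y),\ \theta(y;zw)$ is exact. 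I would also note in passing that the stated hypothesis ``$\tp(a/Cb)$ forks'' is redundant: $a \nindep_C d$ together with $d \in \acl(Cb)$ already forces $a \nindep_C b$, so it need not be assumed separately.
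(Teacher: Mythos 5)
Your proof is correct and matches the paper's intent exactly: the paper offers no separate argument for this corollary, treating it as an immediate restatement of the preceding lemma, and your write-up simply supplies the routine bookkeeping (isolating the algebraic type $\tp(d/Cb)$ by an algebraic formula $\theta$, matching parameters, and noting $a\models\psi$ via the witness $d$) needed to invoke parts (1) and (2). Your side remark that the hypothesis ``$\tp(a/Cb)$ forks'' is redundant is also accurate.
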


\begin{thm}\label{thm:rank2stable}
    Let $T$ be a simple theory with DWIP, suppose $SU(a) = 2$ and $SU(b)<\omega$. 
    Then, if $a \nindep b$ then there is a stable formula in $tp(a/b)$ which forks over $\emptyset$.
\end{thm}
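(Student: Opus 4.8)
The plan is to use DWIP to replace the real tuple $b$ by a single imaginary $d\in\acl(b)$ and to reduce the whole problem to producing a stable forking formula in $\tp(a/d)$, which Corollary~\ref{thm:stable-upward-in-acl} then transports back up to $\tp(a/b)$. First I would dispose of the degenerate case: if $SU(a/b)=0$ then $a\in\acl(b)$, so $\tp(a/b)$ contains an algebraic formula, which is stable and (since $SU(a)=2$ forces $a\notin\acl(\emptyset)$) forks over $\emptyset$. So assume $SU(a/b)=1$. Applying DWIP to $a\nindep b$ yields an imaginary $d\in\acl(b)$ with $a\nindep d$; since $\acl(d)\subseteq\acl(b)$ and $a\notin\acl(b)$ we get $SU(a/d)=1$, and $SU(d)\le SU(b)<\omega$. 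Because $d\in\acl(b)$ and $\tp(a/b)$ forks over $\emptyset$, Corollary~\ref{thm:stable-upward-in-acl} shows it suffices to exhibit a stable forking formula (over $\emptyset$) in $\tp(a/d)$.

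Now I would split on the position of $d$. If $d\in\acl(a)$, the algebraic formula in $\tp(d/a)$ is stable and forks over $\emptyset$, so Theorem~\ref{stableforkingsymmetry} hands us a stable forking formula in $\tp(a/d)$. Otherwise $d\notin\acl(a)$ (and symmetrically $a\notin\acl(d)$), and I would study $f=\Cb(\tp(d/a))\subseteq\acl(a)$. Since $d\nindep a$ we have $SU(f)\ge 1$, while $f\in\acl(a)$ gives $SU(f)\le SU(a)=2$. If $SU(f)=1$, then $d\nindep f$ with $SU(f)=1$, so Proposition~\ref{thm:rank-1} provides a stable forking formula in $\tp(d/f)$; as $f\in\acl(a)$ and $d\nindep a$, Corollary~\ref{thm:stable-upward-in-acl} upgrades this to a stable forking formula in $\tp(d/a)$, and Theorem~\ref{stableforkingsymmetry} then yields one in $\tp(a/d)$.

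The remaining and decisive case is $SU(f)=2$, where $f\in\acl(a)$ forces $a\in\acl(f)$, so $a$ is interalgebraic with $\Cb(\tp(d/a))$. Here I would aim to establish $\maltese(a,d,\emptyset)$ and then invoke Proposition~\ref{prop:maltese-over-empty}. By Proposition~\ref{prop:maltese}(1) it suffices to show that every infinite set $D$ of distinct realizations of $\tp(d/a)$ has $a\in\acl(D)$. For a \emph{Morley} sequence this is immediate: the canonical base lies in $\bdd$ of such a sequence, so $f\in\acl(D)$ and hence $a\in\acl(f)\subseteq\acl(D)$. The real content is to handle arbitrary, possibly dependent, $D$, and for this I would show that a suitable formula $r(x,y)\in\tp(d,a)$ is a quasidesign -- that two distinct parameters share only finitely many realizations -- and then quote the proposition that for a quasidesign any family $r(d_i,a)$ satisfies $a\in\acl((d_i)_i)$, together with the proposition that a quasidesign yields $\maltese$. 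The finiteness of shared realizations is exactly where $a\in\acl(\Cb(\tp(d/a)))$ and $SU(a)=2$ are used: any parameter realized by the shared $d$'s is interalgebraic with the canonical base of their common type, so two distinct such parameters would over-determine that canonical base.

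I expect this last step -- extracting an honest quasidesign \emph{formula} from the semantic fact $a\in\acl(\Cb(\tp(d/a)))$ -- to be the main obstacle. The naive argument (pick an independent $n$-subtuple of the shared realizations to pin down each parameter via an algebraicity witness) can break down when the shared realizations form an algebraic chain admitting no large independent subset over a parameter; circumventing this requires a careful compactness argument that makes the bound on shared realizations uniform while restricting attention to sufficiently generic $d$'s. This is also precisely where the restriction to base $\emptyset$ is forced, since Proposition~\ref{prop:maltese-over-empty} is available only over $\emptyset$.
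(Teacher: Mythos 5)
Your proposal diverges from the paper's argument and has a genuine gap precisely at its decisive case. The paper proves this theorem by induction on $SU(b)$, splitting on whether $\maltese(a,b,\emptyset)$ holds: if it holds, Proposition \ref{prop:maltese-over-empty} finishes; if it fails, the witnessing $a$-indiscernible sequence $I$ in $\tp(b/a)$ with $a\notin\acl(I)$ is fed into a rank computation ($1\le SU(a/I)\le SU(a/b)<2$ forces $a\indep_b I$), so that $e=\Cb(a/I)\in\bdd(b)\cap\bdd(b')$ for every $b'\in I$, whence $b\notin\acl(e)$ and the Lascar inequalities give $SU(e)<SU(b)$; DWIP then converts the hyperimaginary $e$ into an imaginary $d\in\acl(b)$ of strictly smaller rank with $a\nindep d$, the induction hypothesis applies to $\tp(a/d)$, and Corollary \ref{thm:stable-upward-in-acl} lifts the formula to $\tp(a/b)$. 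Your proof contains no induction on $SU(b)$ and no rank-dropping step; your single invocation of DWIP is vacuous (since $b$ is already a finite imaginary tuple, $d=b$ satisfies the conclusion of DWIP, so nothing is gained), whereas in the paper DWIP is essential and is applied to the hyperimaginary canonical base of the type over the indiscernible sequence.

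The gap is in your case $SU(f)=2$, where $f=\Cb(\tp(d/a))$ is interalgebraic with $a$. You need $\maltese(a,d,\emptyset)$, i.e.\ that \emph{every} non-constant $a$-indiscernible sequence in $\tp(d/a)$ has $a$ in its algebraic closure. Interalgebraicity of $a$ with $\Cb(\tp(d/a))$ only handles Morley sequences (where the canonical base sits in the bounded closure of the sequence); an arbitrary $a$-indiscernible sequence of realizations can be highly dependent over $a$, and nothing forces it to recover $f$. Your fallback --- extracting a quasidesign formula so that two distinct parameters share only finitely many $d$'s --- is circular: to see that an infinite shared set over-determines the canonical base you would already need that such a set algebraically determines its parameter, which is the statement being proved; and, as the paper notes, the theorem does not assume 1-basedness, so quasidesigns cannot be excluded either. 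You flag this step yourself as "the main obstacle," and it is: the paper's design deliberately avoids ever having to \emph{prove} $\maltese$, treating its failure as the productive case via $\Cb(a/I)$ and induction. (The earlier cases of your argument are essentially sound modulo routine repairs --- e.g.\ $f$ is a hyperimaginary, so Proposition \ref{thm:rank-1} cannot be applied to it directly without first passing to an imaginary via DWIP --- but they do not reach the core of the theorem.)
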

\begin{proof}
    Suppose $a \nindep b$.
    The proof is by induction on the $SU$-rank of $b$ over $\emptyset$.
    If $SU(b) = 1$ then proposition \ref{thm:rank-1} produces a stable forking formula in $tp(a/b)$.
    Now suppose $SU(b) = r<\omega$ and the proposition holds for all elements
    of smaller rank.
    
    Now, either $\maltese(a,b,\emptyset)$ holds or it doesn't. If it does hold, then we recover a stable forking formula in $\tp(a/b)$ using Proposition \ref{prop:maltese-over-empty}, so we may assume that $\maltese(a,b,\emptyset)$ does not hold
    In particular, we choose a sequence $I = (b_ic)_{i \in\omega}$ containing $b$
    which is indiscernible over $a$ and for which $a \not\in \acl(I)$.
    The dependence $a \nindep b$ implies $a \nindep I$ which yields the rank inequalities:
    $$1 \leq SU(a/I) \leq SU(a/b) < SU(a) = 2.$$
    Hence, $SU(a/I) = SU(a/b)$, and so $a \indep_{b} I$.
    Put $e = \Cb(a/I)$.
    Since $a$ and $I$ are independent over $b$, $e \in \bdd(b)$.
    In fact, we could have done the above rank argument with any $b' \in I$ to give $e \in \bdd(b')$.
    However, $b \notin \acl(e)$, since $b \in \acl(e)$ would imply $b \in \acl(b'C)$ for any $b' \in I$, which is impossible since $b$ and $b'$ are both in the same indiscernible sequence $I$.
    
    Hence $SU(b/e) > 0$ and $SU(e/b) = 0$.
    The Lascar inequalities then show $SU(e) < SU(b)$:
    \[
    SU(e) < SU(b/e) + SU(e) \leq SU(eb) \leq SU(e/b) \oplus SU(b) = SU(b)
    \]
    Both $a \nindep I$ and $a \indep_{e} I$ imply $a \nindep e$ by transitivity.
    Applying DWIP yields a finite, real tuple $d \in \acl(e)$ with $a \nindep d$.
    The induction hypothesis gives a stable forking formula inside $tp(a/d)$
    since $SU(d) \leq SU(e) < SU(b)$.
    Then because $d \in \acl(b)$, Corollary \ref{thm:stable-upward-in-acl} gives a stable forking formula in $tp(a/b)$.
\end{proof}

We note that the statement  of Theorem \ref{thm:rank2stable} makes no explicit appeal to 1-basedness, but of course, the possible lack of 1-basedness that necessitated the discussion fo quasi-designs and property $\maltese$. We were only able to handle $\maltese$ with $\emptyset$ in the base because the definition of stable-forking requires that the stable formula is stable as a formula without parameters -- \emph{before} loading a tuple of elements from the base $C$. If we relax the definition of stable forking, we \emph{can} have a ``stronger'' theorem:

\begin{defn}
A simple theory $T$ has \emph{relatively stable forking} if whenever $q(x)$ is a complete type over a model $M$, $C \subset M$ and $q$ forks over $A$ then there is is a formula,
$\psi(x;b,c) \in q$ such that:
\begin{itemize}
\item If $c'\equiv c$, then $\psi(x;y,c')$ does not have order property \emph{with $c'$ fixed}.
\item $\psi(x;b,c)$ forks over $C$.
\end{itemize}
\end{defn}

\begin{thm}\label{thm:rank2stable-relative}
    Let $T$ be a simple theory with DWIP, suppose $SU(a/C) = 2$ and $SU(b/C)<\omega$. 
    Then, if $a \nindep_C b$, then there is a formula $\phi(x;b,c)\in tp(a/bC)$ which forks over $C$ and such that $\phi(x;y,c)$ does not have the order property with $c$ fixed.
\end{thm}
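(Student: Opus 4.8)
The plan is to run the proof of Theorem~\ref{thm:rank2stable} essentially line by line, relativizing every independence, rank, and canonical-base computation to $C$ and relaxing each appeal to stability to ``no order property with a fixed base parameter.'' Conveniently, both Proposition~\ref{thm:rank-1} and the symmetry result Theorem~\ref{stableforkingsymmetry} are already stated over an arbitrary base, so the only genuinely new ingredients are (i) a relative version of Proposition~\ref{prop:maltese-over-empty} and (ii) a relatively-stable version of the passage ``upward'' through algebraic closure underlying Corollary~\ref{thm:stable-upward-in-acl}. As before, I induct on $SU(b/C)$. If $a\in\acl(bC)$ an algebraic---hence stable---formula in $tp(a/bC)$ forks over $C$; if $b\in\acl(aC)$ the same applies to $tp(b/aC)$ and Theorem~\ref{stableforkingsymmetry} transports a stable formula back to $tp(a/bC)$. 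Either way one gets a genuinely stable, so a fortiori relatively stable, forking formula, and this also settles the base case $SU(b/C)=1$ through Proposition~\ref{thm:rank-1}. So I may assume $a\notin\acl(bC)$, $b\notin\acl(aC)$, and $SU(b/C)=r>1$.

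The conceptual core is the relative analogue of Proposition~\ref{prop:maltese-over-empty}: if $\maltese(a,b,C)$ holds, then for some finite $c\subseteq C$ there is $\psi(x;b,c)\in tp(a/bC)$ forking over $C$ such that $\psi(x;y,c)$ has no order property with $c$ fixed. I would feed $\maltese(a,b,C)$ into Proposition~\ref{prop:maltese}(3) to extract $\delta(x,z)$, $\zeta(y;xz)$, and $\theta(x;z\bar y)$ over a finite $c\subseteq C$, and set $\psi(x;b,c)\equiv\delta(x,c)\wedge\zeta(b;x,c)$. The relatively-stable half copies the stability argument of Proposition~\ref{prop:maltese-over-empty} with $c$ carried as a constant: a $c$-indiscernible order-property witness $(d_i,e_i)$ (with $\psi(d_i;e_j,c)$ iff $i\leq j$) satisfies the hypothesis of the entailment for every index $i\leq 1$ and so forces $d_i\in\acl(c\,e_1\dots e_n)$; since $\theta(x;z\bar y)$ remains algebraic in $x$ after substituting $z=c$, the infinitely many such $d_i$ collide, contradicting the order. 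The forking half mirrors the original over $C$: a non-forking realization $a^{*}\indep_C b$ spread along a $C$-indiscernible sequence in $tp(b/C)$ is driven into $\acl(C)$ by the entailment together with $a^{*}\indep_C I$, whence $\psi(x;b,c)\models x\in\acl(bC)$, contradicting $a\notin\acl(bC)$.

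With this in hand the inductive step is verbatim Theorem~\ref{thm:rank2stable} over $C$. Since $\maltese(a,b,C)$ now fails, I fix a non-constant $aC$-indiscernible $I=(b_i)_{i<\omega}$ in $tp(b/aC)$ with $a\notin\acl(IC)$. The chain $1\leq SU(a/IC)\leq SU(a/bC)<SU(a/C)=2$ forces $a\indep_{bC}I$, so $e=\Cb(a/IC)$ lies in $\bdd(b'C)$ for every $b'\in I$; distinctness of the $b'$ gives $b\notin\acl(eC)$, and the Lascar inequalities then yield $SU(e/C)<SU(b/C)$. Transitivity turns $a\nindep_C I$ and $a\indep_{eC}I$ into $a\nindep_C e$, and DWIP produces a finite real tuple $d\in\acl(eC)\subseteq\acl(bC)$ with $a\nindep_C d$ and $SU(d/C)\leq SU(e/C)<r$. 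The induction hypothesis then supplies a formula in $tp(a/dC)$ forking over $C$ with no order property once its base parameter is fixed.

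The step I expect to demand the most care---and the place where the relaxation earns its keep---is transporting this formula up from $d$ to $b$ while preserving the relaxed stability. This requires a relatively-stable form of the lemma behind Corollary~\ref{thm:stable-upward-in-acl}: if $\zeta(x;y,c')$ has no order property with $c'$ fixed and $\theta(y;b,c'')$ is algebraic in $y$, then $\exists y[\zeta(x;y,c')\wedge\theta(y;b,c'')]$ still forks over $C$ and has no order property with $(c',c'')$ fixed. The proof of part~(1) of that lemma adapts directly with $c'c''$ treated as constants: the existential witnesses are algebraic over the fixed parameters, pigeonhole isolates a single witness $d'$, and ``no order property with $c'$ fixed'' applied to the $c'$-indiscernible sequence $(a_i)$ and the parameter $d'$ makes the relevant truth-set finite or cofinite, delivering the contradiction. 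The only genuine bookkeeping is to amalgamate the several finite tuples arising from $C$---from $\maltese$, from the algebraic formula isolating $d$ over $bC$, and through the induction---into a single finite $c\subseteq C$ serving as the fixed parameter in the final formula; since every stage involves only finite tuples and finite $SU$-rank, this presents no real difficulty.
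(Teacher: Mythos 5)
Your proposal is correct and is exactly the argument the paper intends: the paper states Theorem~\ref{thm:rank2stable-relative} without proof, presenting it as the evident relativization of Theorem~\ref{thm:rank2stable} once the stability requirement is relaxed to ``no order property with the base parameter $c$ fixed,'' which is precisely the obstruction (Proposition~\ref{prop:maltese-over-empty} only working over $\emptyset$) you identify and repair. Your relativized versions of Proposition~\ref{prop:maltese-over-empty} and of the lemma behind Corollary~\ref{thm:stable-upward-in-acl}, together with the $C$-relativized rank/canonical-base/DWIP induction, faithfully reproduce the paper's approach.
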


The other way -- and probably the more natural way -- to deal with property $\maltese$ is to just assume 1-basedness. This is the approach taken in the next subsection.

\subsection{A generalization of the argument to 1-based theories of finite SU-rank} 
\label{sec:broader-application}

In this section, we will show that the proof for theorem \ref{thm:rank2stable} has much broader applicability than it would initially seem. 
More precisely, in that argument, we used an indiscernible sequence $I = (b_i : i < \omega)$ with the following properties:
\begin{enumerate}
    \item $I$ is indiscernible over $aC$
    \item $a \indep_{b_i C} I$ for all $i < \omega$
\end{enumerate}
There we relied on that fact of $SU(a/C) = 2$ to make such a sequence, but as we noted earlier, this was akin to saying that ``$a$ is 1-based with respect to $I$.'' Here, we will see that this statement is precise; that is, assuming $T$ is 1-based is sufficient to extend the argument to all elements of finite SU-rank.
\begin{lemma}\label{lem:1based-sequence}
    Suppose $T$ is a simple, 1-based theory.
    If $a$,$b$ are elements such that $a \nindep_{C} b$ for some set $C$
    then there is sequence $I = (b_i : i < \omega)$ indiscernible over $aC$ such that $b_0 = b$ and
    satisfying $a \indep_{b_i C} I$ for all $i < \omega$.
\end{lemma}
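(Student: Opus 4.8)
The plan is to build the sequence from the canonical base of the dependence and then spread that base along a Morley sequence. Set $e=\Cb(a/bC)$; by definition $a\indep_{eC}b$ and $e\subseteq\bdd(bC)$, and since $a\nindep_C b$ the base is nontrivial, $e\not\subseteq\bdd(C)$, so $e$ records precisely how $a$ depends on $b$ over $C$. The one place I would feed in $1$-basedness is the containment $\Cb(a/bC)\subseteq\bdd(aC)$; together with the above this gives $e\subseteq\bdd(aC)\cap\bdd(bC)$ and, crucially, $\bdd(aeC)=\bdd(aC)$, which will let me carry $e$ in the base at no cost.

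Next I would let $I=(b_i)_{i<\omega}$ be a Morley sequence of $\tp(b/aC)$ over $aC$ with $b_0=b$; since $\bdd(aeC)=\bdd(aC)$ this is at the same time a Morley sequence over $aeC$. It is indiscernible over $aeC\supseteq aC$, which is property~(1), and each $b_i\equiv_{aC}b$, so $I$ lies in $\tp(b/aC)$ as required. Before treating the independence I would record that $e\subseteq\bdd(b_iC)$ for every $i$: the $b_i$ realize a common type over $eC$, so an automorphism fixing $eC$ and carrying $b_0$ to $b_i$ fixes $e$ and sends $\bdd(b_0C)=\bdd(bC)\ni e$ onto $\bdd(b_iC)$.

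The heart of the matter is property~(2), and I would derive it from the single statement $a\indep_{eC}I$. I would prove $a\indep_{eC}b_{\le n}$ by induction on $n$: the base case $a\indep_{eC}b_0$ is the defining property of $e$; for the inductive step I would take the Morley condition $b_n\indep_{aeC}b_{<n}$ together with $a\indep_{eC}b_n$ (obtained from $a\indep_{eC}b_0$ by the $eC$-conjugacy of $b_0$ and $b_n$) and apply the chain condition to move $a$ into the base, getting $a\indep_{eCb_{<n}}b_n$ and hence $a\indep_{eC}b_{\le n}$. Finite character then gives $a\indep_{eC}I$, so $\Cb(a/IC)\subseteq\bdd(eC)$; combined with $eC\subseteq\bdd(b_iC)$ from the previous step this yields $\Cb(a/IC)\subseteq\bdd(b_iC)$, i.e.\ $a\indep_{b_iC}I$ for all $i$.

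The step I expect to be the real obstacle is securing~(2) uniformly in $i$ rather than only at $i=0$: a priori $\Cb(a/IC)$ is bounded only over some finite sub-tuple of $I$, and one must rule out that it genuinely spreads over several $b_i$ at once --- which is exactly the quasidesign/pseudoplane configuration that $1$-basedness forbids. In the argument above this is what the two ingredients cooperate to prevent: locating $e$ inside $\bdd(aC)$ (this is where $1$-basedness enters) makes $e$ an invariant attached to $a$, and the homogeneity of the Morley sequence then forces that invariant into every $\bdd(b_iC)$. I would also keep track of non-degeneracy, since it is this feature --- that $I$ is genuinely non-constant and, through $e\not\subseteq\bdd(C)$, carries real dependence on $a$ --- and not the bare independence, that makes $I$ usable in the finite-rank induction of the following subsection.
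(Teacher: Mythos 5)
Your proof is correct in substance, but it takes a genuinely different route from the paper's. The paper builds a long two-sided Morley sequence $J=(b_i:-|T|^+\leq i\leq |T|^+)$ in $\tp(b/aC)$, invokes $1$-basedness in the form ``the tail of $J$ is Morley over $b_0C$,'' and then uses local character plus transitivity to discard a small exceptional set $D$ and land on a subsequence with $a\indep_{b_0C}I$, getting the statement for the other indices from the symmetry of the two-sided construction. You instead localize the dependence in $e=\Cb(a/bC)$, feed in $1$-basedness in its canonical-base form $e\subseteq\bdd(aC)\cap\bdd(bC)$, prove $a\indep_{eC}I$ by the standard induction (pairwise independence from the base plus $b_n\indep_{aeC}b_{<n}$ gives joint independence), and then transfer the base to each $b_iC$ because $e\subseteq\bdd(b_iC)$ for every $i$. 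Your route gives a cleaner, more conceptual explanation of why the independence holds \emph{uniformly} in $i$ --- the invariant $e$ sits inside every $\bdd(b_iC)$ --- and avoids the cardinality bookkeeping with $|T|^+$; the paper's route avoids explicit mention of canonical bases (hence hyperimaginaries) in this lemma.

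One step needs tightening. You repeatedly conjugate over $eC$ and $aeC$ (to get $b_i\equiv_{eC}b_0$, hence $e\subseteq\bdd(b_iC)$, and $ab_n\equiv_{eC}ab_0$, hence $a\indep_{eC}b_n$), but a Morley sequence in $\tp(b/aC)$ is only guaranteed indiscernible over $aC$, while $e$ lies in $\bdd(aC)$ rather than $\dcl(aC)$: an $aC$-automorphism carrying $b_0$ to $b_i$ may move $e$ to a conjugate canonical base. The fix is standard --- take $I$ to be a Morley sequence in $\lstp(b/aC)$ (equivalently, arrange indiscernibility over $\bdd(aC)$), which can still be chosen with $b_0=b$ --- but it should be said, since without it the key claim $e\subseteq\bdd(b_iC)$ is not justified.
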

\begin{proof}
    Let $a$,$b$,$C$ be as in the statement of the lemma.
    We will consider ``negative'' ordinals for indexing purposes, and
    let $J = (b_i : -|T|^+ \leq i \leq |T|^+)$ be a Morley sequence in $\tp(b/aC)$.
    By using an automorphism, we may assume $b_0 = b$.
    Let $L = (b_i : 1 \leq i \leq |T|^+ )$ and $K = (b_i : -|T|^+ \leq i \leq -1)$.
    For the moment we will focus on $L$.
    By 1-basedness $L$ is Morley over $b_0 C$.
    For some subset $D \subset L$ such that $|D| \leq |T|$ we have
    $a \indep_{b_0 C D} L$.
    Let $L^* = L \setminus D$.
    Since $L^*$ is Morley over $b_0 C$, $D \indep_{b_0 C} L^*$.
    Apply transitivity to get $a \indep_{b_0 C} L^*$.
    In the same way we find a subset $K^* \subset K$ such that $a \indep_{b_0 C} K^*$.
    There is a subset $D \subset K^*$ such that $a \indep_{b_0 C L^* D} K^*$,
    by replacing $K^*$ with $K^* \setminus D$ we may assume $a \indep_{b_0 C} K^* b_0 L^*$
    (note that in either case we have $|K^*| = |T|^+$).
    Let $I' = K^{*\frown} b_0^\frown L^*$.
    $I'$ is indiscernible over $aC$, which implies for each $b_i \in I$,
    $a \indep_{b_i C} I'$.
    Taking $I = (b_i : 0 \leq i < \omega) \subset I'$ then works for the conclusion of the lemma.
\end{proof}

Now we have the theorem.
\begin{thm}
    Let $T$ be a 1-based supersimple theory.
    Let $C$ be a set.
    Suppose $a$ is an imaginary element and $b$ is an arbitrary imaginary element with $SU(b/C) < \omega$.
    If $a \nindep_{C} b$ then there is a stable formula in $tp(a/bC)$ which forks over $C$.
\end{thm}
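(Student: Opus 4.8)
The plan is to run exactly the induction used in the proof of Theorem~\ref{thm:rank2stable}, inducting on $SU(b/C)$, but to replace the rank-$2$ hypothesis on $a$ -- which there served only to manufacture a suitable indiscernible sequence, and was the sole reason for restricting the base to $\emptyset$ -- by an appeal to $1$-basedness through Lemma~\ref{lem:1based-sequence}. First I would record that a $1$-based supersimple theory has DWIP: supersimplicity yields EHI by \cite{BPW}, EHI trivially gives WEHI, and WEHI is equivalent to DWIP by Corollary~\ref{cor:DWIP-is-WEHI}. Hence all the machinery of the previous subsection (DWIP, Corollary~\ref{thm:stable-upward-in-acl}, the symmetry of stable forking from Theorem~\ref{stableforkingsymmetry}, and Proposition~\ref{thm:rank-1}) is available, now over an arbitrary base $C$ rather than $\emptyset$. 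Note that $SU(a/C)$ is never assumed finite; since $a$'s rank never enters the argument below, this is genuinely more general than the rank-$2$ case.

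Before the induction I would dispose of the degenerate algebraic cases. Since $a\nindep_C b$ we have $a,b\notin\acl(C)$. If $a\in\acl(bC)$, an algebraic formula of $tp(a/bC)$ is stable and forks over $C$, and we are done. If $b\in\acl(aC)$, then $tp(b/aC)$ contains an algebraic (hence stable) forking formula, and Theorem~\ref{stableforkingsymmetry} transfers a stable forking formula to $tp(a/bC)$ (exactly as in Proposition~\ref{thm:rank-1}). So I may assume $a\notin\acl(bC)$ and $b\notin\acl(aC)$; the latter forces $SU(b/aC)\ge 1$ and guarantees that the Morley sequence produced below is non-constant with terms pairwise non-interalgebraic over $C$.

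For the inductive step (the base case $SU(b/C)=1$ being precisely Proposition~\ref{thm:rank-1}), I would invoke Lemma~\ref{lem:1based-sequence} to obtain a sequence $I=(b_i:i<\omega)$, indiscernible over $aC$ with $b_0=b$, satisfying $a\indep_{b_iC} I$ for every $i$. Put $e=\Cb(a/IC)$; the independence $a\indep_{b_iC}I$ places $e\in\bdd(b_iC)$ for every $i$. Using $b_1\indep_{aC}b_0$ (symmetry of the Morley sequence) together with $b\notin\acl(aC)$, a short rank computation gives $SU(b/b_1C)\ge SU(b/ab_1C)=SU(b/aC)\ge 1$, so $b\notin\acl(b_1C)$; since $e\in\bdd(b_1C)$ and $b$ is a finite tuple, this yields $b\notin\acl(eC)$. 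Therefore $SU(b/eC)>0$ while $SU(e/bC)=0$, and the Lascar inequalities
\[
SU(e/C) < SU(b/eC)+SU(e/C) \le SU(eb/C) \le SU(e/bC)\oplus SU(b/C) = SU(b/C)
\]
deliver the crucial rank drop $SU(e/C)<SU(b/C)$.

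Finally, combining $a\nindep_C b$ (hence $a\nindep_C I$ by monotonicity) with $a\indep_{eC}I$ (base monotonicity applied to the defining property of the canonical base) gives $a\nindep_C e$ by transitivity. DWIP then produces an imaginary $d\in\acl(eC)$ with $a\nindep_C d$, and $d\in\acl(eC)$ gives $SU(d/C)\le SU(e/C)<SU(b/C)$, so the induction hypothesis supplies a stable forking formula in $tp(a/dC)$. As $e\in\bdd(bC)$ and $d$ is a finite imaginary tuple, $d\in\acl(bC)$, so Corollary~\ref{thm:stable-upward-in-acl} pushes the stable forking formula up to $tp(a/bC)$, closing the induction. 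The one step needing real care -- and the crux of adapting the rank-$2$ argument -- is establishing $b\notin\acl(eC)$, i.e.\ the rank drop $SU(e/C)<SU(b/C)$; this is precisely where both $1$-basedness (to obtain a sequence with $a\indep_{b_iC}I$, bypassing property $\maltese$) and the preliminary reduction to $b\notin\acl(aC)$ (to keep the Morley sequence genuinely generic) are indispensable.
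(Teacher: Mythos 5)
Your proof is correct and follows essentially the same route as the paper's: induction on $SU(b/C)$ with Proposition~\ref{thm:rank-1} as the base case, Lemma~\ref{lem:1based-sequence} to produce the indiscernible sequence, the canonical base $e=\Cb(a/IC)$ with the Lascar-inequality rank drop, DWIP to extract an imaginary $d$, and Corollary~\ref{thm:stable-upward-in-acl} to push the stable forking formula up to $tp(a/bC)$. Your explicit justification of DWIP via EHI for supersimple theories and your preliminary reduction to $b\notin\acl(aC)$ (needed so the Morley sequence is non-constant and the step $b\notin\acl(eC)$ goes through) are details the paper leaves implicit, and they are welcome additions rather than deviations.
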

\begin{proof}
    This proof closely follows the proof of theorem \ref{thm:rank2stable}.
    Suppose $a \nindep_{C} b$.
    We proceed by induction on $SU(b/C)$.
    If $SU(b/C) = 1$ then proposition \ref{thm:rank-1} produces a stable forking formula in $tp(a/bC)$.
    Now suppose $SU(b/C) = \alpha$ and the proposition holds for all elements
    having smaller rank (over $C$).
    Choose a forking formula $\varphi(x;bc) \in \tp(a/bC)$ (for some $c \in C$), and a sequence $I = (b_ic)_{i \in\omega}$ containing $b$
    which is indiscernible over $aC$ and for which $a \not\in \acl(IC)$ and $a \indep_{bC} I$; such a sequence exists by lemma \ref{lem:1based-sequence}.
    Let $e = \Cb(a/IC)$.
    $e \in \bdd(bC)$ since $a \indep_{bC} IC$.
    In fact, the above rank argument could be repeated with any $b' \in I$ to give $e \in \bdd(b'C)$.
    However, $b \notin \acl(e)$, since $b \in \acl(e)$ would imply $b \in \acl(b'C)$ for any $b' \in I$, which is impossible since $b$ and $b'$ are both in the same indiscernible sequence $I$.
    Hence $SU(b/e) > 0$ and $SU(e/b) = 0$.
    The Lascar inequalities then show $SU(e) < SU(b)$.
    $a \nindep_{C} I$, and $a \indep_{e} I$ imply $a \nindep_{C} e$.
    DWIP produces a finite, real tuple $d \subset e$ with $a \nindep_{C} d$.
    Note that $SU(d) \leq SU(e)$ and $d \in \acl(bC)$.
    Since $SU(d C) < SU(bC)$, apply the induction hypothesis to get a stable forking formula inside $tp(a/\hat eC)$.
    Then corollary \ref{thm:stable-upward-in-acl} gives a stable forking formula in $tp(a/bC)$.    
\end{proof}

\bibliographystyle{plain}
\bibliography{myref}

\begin{thebibliography}{10}

\bibitem{adler-thesis}
Hans Adler.
\newblock {\em Explanation of Independence}.
\newblock PhD thesis, Universit\"at Freiburg, 2005.

\bibitem{BPW}
Steven Buechler, Anand Pillay, and Frank Wagner.
\newblock Supersimple theories.
\newblock {\em J. Amer. Math. Soc.}, 14(1):109--124, 2001.

\bibitem{ealy-thesis}
Clifton Ealy.
\newblock {\em Thorn Forking in Simple Theories, and a Manin-Mumford for
  $T$-Modules}.
\newblock PhD thesis, University of California, Berkeley, 2004.

\bibitem{HKP}
Bradd Hart, Byunghan Kim, and Anand Pillay.
\newblock Coordinatisation and canonical bases in simple theories.
\newblock {\em J. Symbolic Logic}, 65(1):293--309, 2000.

\bibitem{KimInThere}
Byunghan Kim.
\newblock Simplicity, and stability in there.
\newblock {\em J. Symbolic Logic}, 66(2):822--836, 2001.

\bibitem{AroundStableForking}
Byunghan Kim and Anand Pillay.
\newblock Around stable forking.
\newblock {\em Fund. Math.}, 170(1-2):107--118, 2001.

\bibitem{Peretz}
Assaf Peretz.
\newblock Geometry of forking in simple theories.
\newblock {\em J. Symbolic Logic}, 71(1):347--359, 2006.

\bibitem{pillay-geom-book}
Anand Pillay.
\newblock {\em Geometric stability theory}, volume~32 of {\em Oxford Logic
  Guides}.
\newblock The Clarendon Press Oxford University Press, New York, 1996.
\newblock Oxford Science Publications.

\bibitem{Shami11}
Ziv Shami.
\newblock Countable hypersimple unidimensional theories.
\newblock {\em J. Lond. Math. Soc. (2)}, 83(2):309--332, 2011.

\bibitem{Wagner-SimpleTheories}
Frank Wagner.
\newblock {\em Simple Theories}, volume 503.
\newblock Kluwer Academic Publishers, 2000.

\end{thebibliography}

\end{document}